\newtheorem{thm}{Theorem}[section]
\newtheorem{defi}[thm]{Definition}
\newtheorem{prop}[thm]{Proposition}
\newtheorem{lemme}[thm]{Lemma}
\newtheorem{cor}[thm]{Corollary}
\newtheorem{nota}[thm]{Notation}
\theoremstyle{remark}
\newtheorem{rmk}[thm]{Remark}
\newtheorem{ex}[thm]{Example}
\DeclareMathOperator{\Z}{\mathbb{Z}}
\DeclareMathOperator{\sing}{Sing}
\DeclareMathOperator{\codim}{Codim}
\DeclareMathOperator{\rk}{rk}
\DeclareMathOperator{\Image}{Im}
\DeclareMathOperator{\discr}{discr}
\DeclareMathOperator{\Fix}{Fix}
\DeclareMathOperator{\tors}{torsion}
\DeclareMathOperator{\id}{id}
\DeclareMathOperator{\Sym}{Sym}
\DeclareMathOperator{\diag}{diag}
\DeclareMathOperator{\Ker}{Ker}
\DeclareMathOperator{\F}{\mathbb{F}_{p}}
\DeclareMathOperator{\Q}{\mathbb{Q}}
\DeclareMathOperator{\Bl}{Bl}
\newcommand{\eq}[1][r]
{\ar@<-3pt>@{-}[#1]
\ar@<-1pt>@{}[#1]|<{}="gauche"
\ar@<+0pt>@{}[#1]|-{}="milieu"
\ar@<+1pt>@{}[#1]|>{}="droite"
\ar@/^2pt/@{-}"gauche";"milieu"
\ar@/_2pt/@{-}"milieu";"droite"}
\newcommand{\incl}[1][r]
  {\ar@<-0.2pc>@{^(-}[#1] \ar@<+0.2pc>@{-}[#1]}
\begin{document}

\title{\bf On the integral cohomology of quotients of manifolds by cyclic groups}

\author{Grégoire \textsc{Menet}}

\maketitle
\begin{abstract}
We propose new tools based on basic lattice theory to calculate the integral cohomology 
of the quotient of a manifold by an automorphism group of prime order. As examples of applications, 
we provide the Beauville--Bogomolov forms of some primitively symplectic orbifolds; we also 
show a new expression for a basis of the integral cohomology of a Hilbert scheme of two points on a surface. 
\end{abstract}

\centerline{\small{AMS Classification 2010: 55N10, 14F43, 53C26}\footnote{Keywords: Integral cohomology, group actions on lattices, compact orientable manifolds, Beauville--Bogomolov forms.}}

\section{Introduction}

Consider a compact connected orientable manifold (without boundary) $X$ and a finite automorphism group $G$, of prime order $p$, acting on $X$. 
In this article, we study the integral cohomology of the quotient $X/G$.
The group $G$ acts naturally on the cohomology of $X$.
In the case of cohomology with rational coefficients, the problem is much simpler,
$H^{*}(X/G,\mathbb{Q})$ is isomorphic to the invariant space $H^{*}(X,\mathbb{Q})^{G}$.
Nevertheless, for integral cohomology this property does not hold anymore.

A fundamental tool for studying this question is given by the work of Smith in \cite{Smith}.
Let $\pi:X\rightarrow X/G$ be the quotient map. Smith has constructed a push-forward map $\pi_*:H^{*}(X,\Z)\rightarrow H^{*}(X/G,\Z)$ such that:
\begin{equation}
\pi_{*}\circ\pi^{*}=p\id_{H^{*}(X/G,\Z)}, \ \ \ \ \ \ \pi^{*}\circ\pi_{*}=\sum_{g\in G}{g^{*}}.
\label{SmithIntro}
\end{equation}
Let $T$ be a $\Z$-module, we denote by $T_f:=\frac{T}{\tors}$ the torsion-free part of $T$.
Then, (\ref{SmithIntro}) provides the exact sequence
\begin{equation}
\xymatrix{ 0\ar[r]&\pi_{*}(H^{k}(X,\Z))_f\ar[r] & H^{k}(X/G,\Z)_f\ar[r] & (\Z/p\Z)^{\alpha_{k}(X)}\ar[r]& 0,}
\label{MainExactSequence}
\end{equation}
where $\alpha_{k}(X)$ is a non-negative integer 
which we call the \emph{$k$-th coefficient of surjectivity of $X$} (See Section \ref{DefinitionIntro}).

The objective of this paper is to provide tools for calculating $\alpha_{k}(X)$. Using basic lattice theory,
we provide an upper bound for the coefficients of surjectivity. 
For a finer result, one has to study the local action of $G$ around the fixed points.
A \emph{simple fixed point} is a fixed point such that
the local action of $G$ around is given by a matrix of the form 
$\diag(1,...,1,\xi_p^d,...,\xi_p^d)$, where $\xi_p$ is a $p$-th root of unity with $d\in\lbrace1,...,p-1\rbrace$. These fixed points have the interest to provide singularities on $X/G$ that can be solved with only one blow-up.
For instance, all fixed points are simple when $p=2$; we will see other examples of such fixed points in Section \ref{application}. 
Our main theorem is:
\begin{thm}\label{Maincor}
Let $X$ be a compact K\"ahler manifold of complex dimension $n$ and $G$ an automorphism group of prime order $p\leq 19$. Let $c:=\codim \Fix G$. 
Assume that:
\begin{itemize}
\item[(i)]
$H^*(X,\Z)$ and  $H^*(\Fix G,\Z)$ are $p$-torsion-free,
\item[(ii)]
the spectral sequence of equivariant cohomology with coefficients in $\F$ degenerates at the second page,
\item[(iii)]
all fixed points of $G$ are simple,
\item[(iv)]
$c\geq\frac{n}{2}+1$.
\end{itemize}
Then, for all $2n-2c+1\leq k \leq2c-1$, the $k$-th coefficient of surjectivity $\alpha_k(X)$ of $X$ vanishes.

\end{thm}
Condition (ii) is natural in the sense that there is already a theory
to show the degeneration of spectral sequences at the second page (see for instance \cite{Deligne}). Sarti, Boissi\`ere and Niper-Wisskirchen, in \cite{SmithTh},
also developed theorems of degeneration of the equivariant spectral sequence that we will use in our applications.
We will also see (Theorem \ref{main}) that condition (ii) can be replaced by a numerical condition involving invariants related to the action of the group and the cohomology of the fixed locus. 

This result can be applied to compute the Beauville--Bogomolov form of some primitively symplectic orbifolds. 
A \emph{orbifold} is a compact analytic complex space with
at worst finite quotient singularities. 
A compact Kähler orbifold is called \emph{symplectic} if its singularities are in codimension 4 and its smooth locus
is endowed with an everywhere non-degenerate holomorphic 2-form.
In addition, a symplectic orbifold is said \emph{primitively symplectic}\footnote{Here, we prefer the terminology \emph{primitively symplectic}
(introduced for the first time by Fujiki in \cite{Fujiki}) rather than \emph{irreducible symplectic} which we reserve for varieties appearing in 
a Bogomolov type decomposition theorem.} if 
the holomorphic 2-form is unique up to scaling. Such varieties are good candidates to generalize the short
list of known irreducible symplectic manifolds. Indeed, some aspects of the theory were already generalized in \cite{Namikawa} and \cite{Mat},
for instance, the Beauville--Bogomolov form, the local Torelli theorem, and the Fujiki formula. 
We recall that the Fujiki formula (\ref{Fujiki}) establishes a proportionality between the cup-product and the Beauville--Bogomolov form; 
the coefficient of proportionality is called the \emph{Fujiki constant}. 

In this paper we compute the Beauville--Bogomolov form of a manifold of $K3^{[2]}$-type quotiented by symplectic automorphisms
of order 3 and 11. 
There are two different examples of symplectic automorphisms $\sigma$ of order 11 on a manifold of $K3^{[2]}$-type $X$ (Example 4.5.1 and Example 4.5.2 in \cite{MongT}).
As explained in \cite[Section 7.4.4]{MongT}, the lattice $H^{2}(X,\Z)^{G}$ will be either $\begin{pmatrix} 6 & 2 & 2 \\ 2 & 8 & -3\\ 2 & -3 & 8\end{pmatrix}$ or $\begin{pmatrix} 2 & 1 & 0 \\ 1 & 6 & 0\\ 0 & 0 & 22\end{pmatrix}$.
We denote the quotients $X/\sigma$, respectively, by $M_{11}^{1}$ and $M_{11}^{2}$.
\begin{thm}\label{MainOrder11}
Let $X$ be a manifold of $K3^{[2]}$-type. Let $G$ be a symplectic automorphism group of order 11 acting on $X$. Then, the Fujiki constant of both $M_{11}^{1}$ and $M_{11}^{2}$ is $33$ and the Beauville--Bogomolov lattices are:
$$H^2(M_{11}^{1},\Z)\simeq\begin{pmatrix} 2 & -1 & 3 \\ -1 & 8 & -1\\ 3 & -1 & 6\end{pmatrix},\ 
H^2(M_{11}^{2},\Z)\simeq\begin{pmatrix} 2 & 1 & 0 \\ 1 & 6 & 0\\ 0 & 0 & 2\end{pmatrix}.$$
\end{thm}
In \cite[Theorem 7.2.7]{MongT}, Mongardi distinguishes two kinds of symplectic automorphism of order 3 on a $K3^{[2]}$-type manifold one with 27 isolated fixed points
and another with a fixed abelian surface.
\begin{thm}\label{MainOrder3}
Let $X$ be a manifold of $K3^{[2]}$-type. 
Let $G$ be a symplectic automorphisms group of order 3 of $X$ with 27 isolated fixed points. We denote $M_{3}=X/G$. 
Then, the Beauville--Bogomolov lattice $H^2(M_{3},\Z)$ is isomorphic to $U(3)\oplus U^{2}\oplus A_{2}^{2}\oplus (-6)$, and the Fujiki constant of $M_{3}$ is $9$.
\end{thm}
These examples provide new small-dimensional moduli spaces of singular primitively symplectic varieties. 
In particular, $M_{11}^{1}$ and $M_{11}^{2}$ provide two positive definite Beauville--Bogomolov lattices of rank 3.
It follows that the period domains of $M_{11}^{i}$, $i\in\{1,2\}$ are given by the zero of the Beauville--Bogomolov quadratic forms on $\mathbb{P}(H^2(M_{11}^{i},\mathbb{C}))\simeq \mathbb{P}^2$, 
hence are conics on $\mathbb{P}^{2}$. 
Moreover, none of their deformations admit a Lagrangian fibration, indeed
the existence of a Lagrangian fibration is in contradiction with having a definite positive Beauville--Bogomolov form since, 
by the Fujiki formula,
the form vanishes on
the pull-back of an ample divisor in the base of the fibration. 
Hence, it seems that
the varieties $M_{11}^{1}$ and $M_{11}^{2}$ look very different 
from the known smooth irreducible symplectic varieties.
So, $M_{11}^{1}$ and $M_{11}^{2}$ could be interesting examples to study in order 
to develop the theory of singular primitively symplectic varieties. 

As it is well known, 
the Hilbert scheme of two points $A^{[2]}$ on a surface $A$ can be obtained as a quotient of 
$\Bl_{\Delta}(A\times A)$ the blow-up of $A\times A$ in the diagonal $\Delta$. 
Therefore, studying the integral cohomology of $A^{[2]}$ is a possible application of our 
theory. As another application of our main theorem, we provide 
a new expression of a basis of the cohomology of $A^{[2]}$, already studied in \cite{QinWang} and \cite{Totaro}. This basis has the advantage to only require that the surface is Kähler. Moreover, the basis is expressed in terms of pull-backs and push-forwards of classes of the surface, allowing to calculate easily the ring structure of $H^*(A^{[2]},\Z)$ (see Lemma \ref{IntersectionHilb2}).

This paper generalizes results of the author's Ph.D. thesis \cite{Lol} (see also \cite{Lol2}), 
and it was already applied in \cite{Kapfer} for finding the Beauville--Bogomolov form of a partial resolution of the quotient by a symplectic involution of the generalized Kummer of dimension 4 which is the first example of odd Beauville--Bogomolov form.

\

The paper is structured as follows. 
Since the integral cohomology has a lattice structure, in Section \ref{BasicLatticeZModule}, we study a general lattice $T$ endowed with an isometry group $G$ of prime order.
In particular, using the $\Z[G]$-module structure of $T$, we compute the discriminant of the invariant lattice $T^G$ (Proposition \ref{BasicLatticeQuotient2}).
In Section \ref{Application to cohomology} we apply these results to find the discriminant of $\pi_{*}(H^{*}(X,\Z))$ (Corollary \ref{MainBasicCor}), and, using basic lattice theory, we provide an upper bound for the coefficients of surjectivity $\alpha_k(X)$ (Proposition \ref{MainBasicAlphaInequality}).
Thanks to Section \ref{BasicLatticeZModule}, we can calculate the cohomology of $G$ in Section \ref{EquivariantCohomology} (Corollary \ref{equico}); this allows us to use equivariant cohomology to compute the cohomology of non-ramified quotients.
In Section \ref{Resolution of singularities} we study resolution of singularities, which leads to the proof of the main Theorem \ref{Maincor} in Section \ref{The main theorem}.
Section \ref{application} is dedicated to some applications, namely the computation of the Beauville--Bogomolov forms of certain 
orbifolds and the description of the integral cohomology of the Hilbert scheme of two points.

~\\

\textbf{Acknowledgements.} 
I am very grateful to Samuel Boissi\`ere for helpful discussions,
to Simon Kapfer for inspiring in the problem of Section \ref{IntegralBasisHilbertScheme} and to Patrick Popescu-Pampu for very useful advices about blow-ups. I also want to thank Emilio Franco, Dimitri Markushevich and Henrique S\'a Earp for many helpful comments on some preliminary versions of this work. 
This work was supported by Fapesp grant 2014/05733-9.

\section{The $\Z[G]$-module structure of a lattice}\label{BasicLatticeZModule}
\subsection{Basic properties of lattices}
We recall the basic facts on lattices which are used in this paper (see for example \cite[Chapter 8.2.1]{Dolgachev}). 
A lattice $T$ is a free $\Z$-module endowed with a non-degenerate bilinear form. 
We denote by $\discr T$ the \emph{discriminant} of $T$,
which is the absolute value of the determinant of the bilinear form of $T$. We say that $T$ is \emph{unimodular} if its discriminant is 1.
A sublattice $N$ of $T$ is said \emph{primitive} if $T/N$ is torsion-free.

If $N$ is a sublattice of a lattice $T$ of the same rank, we have the basic formula:
\begin{equation}
[T:N]^2=\frac{\discr N}{\discr T}.
\label{BasicLatticeTheory}
\end{equation} 
If $T$ is an unimodular lattice and $N$ is a primitive sublattice, then 
\begin{equation}
\discr N=\discr N^\bot.
\label{BasicLatticeTheory2}
\end{equation}
More precisely, 
the projection
\begin{equation}
\frac{T}{N\oplus N^\bot}\rightarrow A_{N}
\label{DiscrUni}
\end{equation}
is an isomorphism, where $A_{N}:=N^\vee/N$ is the \emph{discriminant group}. 
Moreover, this provides an isometry:
\begin{equation}
\nu:A_{N}\rightarrow A_{N^\bot},
\label{BasicLatticeTheory3}
\end{equation}
between the discriminant groups.

Let $X$ be a compact connected orientable manifold of dimension $n$. We endowed $H^{*}(X,\Z)$ with the natural bilinear form $B$ defined as follows.
Let $x\in H^k(X,\Z)$ and $y\in H^q(X,\Z)$,
\begin{itemize}
\item[]
$B(x,y)=0$ if $k+q<n$,
\item[]
$B(x,y)=x\cdot y$ if $k+q=n$, where $\cdot$ refers to the cup-product. 
\end{itemize}
By Poincaré duality, it turns $H^*(X,\Z)$ into an unimodular lattice.

In our case, the lattice $T$ will be endowed with an isometry group $G$ of prime order $p$. We denote by $T^G$ the \emph{invariant lattice} of $T$.
We also denote by $(T^G)^\vee(p)$ the dual of $T^G$ with its bilinear form multiplied by $p$.
Let $G$ be an automorphism group of prime order $p$ on $X$ and $T$ an unimodular sublattice of $H^{*}(X,\Z)$; in Section \ref{pushforward}, we will see that:
$$(T^{G})^\vee(p)\simeq \pi_*(T),$$
where $\pi:X\rightarrow X/G$ is the quotient map. 
For this reason, we call $(T^{G})^\vee(p)$ the \emph{push-forward lattice} of $(T,G)$; a key step for our purpose, which will be accomplished in Section \ref{disrcpushforwardlattice}, is the calculation of its discriminant. To do so, we will use the $\Z[G]$-module structure of $T$ given by $g\cdot x=g(x)$ for all $g\in G$ and all $x\in T$. 

\subsection{The Boissi\`ere--Nieper-Wisskirchen--Sarti invariants}\label{BoissiereSartiH2}
Here we review some important notions introduced in \cite{SmithTh}.
Let $p$ be a prime number, 
$T$ a $p$-torsion-free $\Z$-module of finite rank and $G=\left\langle g\right\rangle$ a group of prime order $p$ acting linearly on $T$.
Then, $T\otimes\F$ is a $\F$-vector space equipped with a linear action of $G$. 
The minimal polynomial of $g$, as an endomorphism of $T\otimes\F$, divides $X^{p}-1=(X-1)^{p}\in\F[X]$, hence $g$ admits a Jordan normal form over $\F$. We can decompose $T\otimes\F$ as a direct sum of some $\F[G]$-modules $N_{q}$ of dimension $q$ for $1\leq q\leq p$, where $g$ acts on $N_{q}$ in a suitable basis by a matrix of the following form:
$$\begin{pmatrix}
1 & 1 & & &  \\
 & \ddots & \ddots &{\fontsize{1cm}{0.5cm}\selectfont\text{0}} & \\
 & & \ddots & \ddots & \\
 & {\fontsize{1cm}{0.5cm}\selectfont\text{0}}& & \ddots & 1\\
 & & & & 1
\end{pmatrix}$$
\begin{defi}
We define the integer $\ell_{q}(T)$ as the number of blocks of size $q$ in the Jordan decomposition of the $\F[G]$-module $T\otimes\F$, so that $T\otimes\F\simeq \oplus_{q=1}^{p} N_{q}^{\oplus \ell_{q}(T)}$. 
\end{defi}
When $p\leq 19$, we can define the $\ell_q(T)$ from another point of view.
Let $\xi_{p}$ be a primitive p-th root of unity, $K:=\mathbb{Q}(\xi_{p})$ and $\mathcal{O}_{K}:= \Z[\xi_{p}]$ the ring of algebraic integers of $K$. By a classical theorem of Masley-Montgomery \cite{Masley}, $\mathcal{O}_{K}$ is a PID if and only if $p\leq 19$. The $\Z[G]$-module structure of $\mathcal{O}_{K}$ is defined by $g\cdot x = \xi_{p}x$ for $x\in \mathcal{O}_{K}$. For any $a\in \mathcal{O}_{K}$, we denote by $(\mathcal{O}_{K},a)$ the module $\mathcal{O}_{K}+\Z$ whose $\Z[G]$-module structure is defined by $g\cdot(x,k)=(\xi_{p}x+ka,k)$.
In the proof of \cite[Proposition 5.1]{SmithTh}, using \cite[Theorem 74.3]{Reiner} of Diederichsen and Reiner, Boissi\`ere, Nieper-Wisskirchen and Sarti show the following.
\begin{prop}\label{defipro}
Let $T$ be a free $\Z$-module of finite rank and let $G=\left\langle g\right\rangle$ be a group of prime order $p\leq19$ acting on $T$. Then, we have an isomorphism of $\Z[G]$-module:
\begin{itemize}
\item[(i)]
$$T\simeq \oplus_{i=1}^{r} (\mathcal{O}_{K},a_{i})\oplus\mathcal{O}_{K}^{\oplus s}\oplus \Z^{\oplus t},$$
where  $r$, $s$, $t$ are integers and $a_{i}\notin (\xi_{p}-1)\mathcal{O}_{K}$.
Moreover, the integers $r$, $s$ and $t$ are uniquely determined by the $\Z[G]$-module structure of $T$. 
\item[(ii)]
When $p\geq3$, $(\mathcal{O}_{K},a_{i})\otimes\F=N_p$, $\mathcal{O}_{K}\otimes\F=N_{p-1}$ and $\Z\otimes\F=N_1$, so $r=\ell_p(T)$, $s=\ell_{p-1}(T)$ and $t=\ell_1(T)$.
\item[(iii)]
We have: 
$$T^G\simeq \oplus_{i=1}^{r} (\mathcal{O}_{K},a_{i})^G\oplus\Z^{\oplus t},$$
with $\rk(\mathcal{O}_{K},a_{i})^G=1$.
\end{itemize}
\end{prop}
\begin{rmk}
In the particular case $p=2$, we have $\ell_{1}(T)=t+s$ and $\ell_{2}(T)=r$.
\end{rmk}
Hence, to distinguish the integers $t$ and $s$ in the case $p=2$, we choose the following notation.
\begin{defi}
When $p=2$, we define $\ell_+(T):=t$, $\ell_-(T):=s$.
\end{defi}
\begin{rmk}
When $p=2$, it follows: $\ell_1(T)=\ell_+(T)+\ell_-(T)$.
\end{rmk}
Moreover, to avoid distinguishing the cases $p=2$ and $p\geq3$ all over, we adopt the following notation.
\begin{nota}
When $p\geq 3$, we also denote $\ell_+(T):=\ell_1(T)=t$ and $\ell_-(T):=\ell_{p-1}(T)=s$.
\end{nota}
We can reformulate Proposition \ref{defipro} in those terms:
\begin{cor}\label{BasicLatticeQuotient}
Let $T$ be a free $\Z$-module and let $G=\left\langle g\right\rangle$ be a group of prime order $p\leq19$ acting on $T$.
We have:
\begin{itemize}
\item[(i)]
$\rk T=p\ell_{p}(T)+(p-1)\ell_{-}(T)+\ell_{+}(T)$,
\item[(ii)]
$\rk T^{G}=\ell_{p}(T)+\ell_{+}(T)$.
\end{itemize}
\end{cor}
As in \cite{SmithTh}, in the case of cohomology, we choose the following notation for our invariants:
\begin{nota}
For all $0\leq k\leq \dim X$, we denote:
$$\ell_+^k(X)=\ell_+(H^k_f(X,\Z)),\ \ell_-^k(X)=\ell_-(H^k_f(X,\Z))\ \text{and for all}\ q\in\left\{1,...,p\right\},\ \ell_q^k(X)=\ell_q(H^k_f(X,\Z)).$$
\end{nota}
\subsection{Discriminant of the push-forward lattice}\label{disrcpushforwardlattice}
\begin{prop}\label{MainBasicLatticeQuotient}
Let $T$ be an unimodular lattice and let $G=\left\langle g\right\rangle$ be a group of prime order $p$ acting on $T$. Then,
$$\discr (T^G)^\vee(p)=p^{\rk T^G-\ell_{p}(T)}.$$
If moreover $p\leq 19$, we have:
$$\discr (T^G)^\vee(p)=p^{\ell_{+}(T)}.$$
\end{prop}
This section is dedicated to proving this proposition. We need several lemmas.
\begin{lemme}\label{BasicLatticeQuotientLemma}
Let $T$ be a lattice and $G=\left\langle g\right\rangle$ be a group of prime order $p$ acting on $T$. Let $\sigma=\id+g+...+g^{p-1}$
then: $$\ker\sigma=(T^{G})^\bot.$$
\end{lemme}
\begin{proof}
We have $\ker\sigma\subset(T^{G})^\bot$. Indeed, let $x\in \ker \sigma$ and $y\in T^{G}$. For all $0\leq k\leq p-1$, we have:
$g^{k}(x)\cdot y=x\cdot y$. Hence, $0=\sigma(x)\cdot y=px\cdot y$. So, $x\cdot y=0$.
Conversely, let $x\in (T^{G})^\bot$, let $y=x+g(x)+...+g^{p-1}(x)$. 
Then, we have $y\in (T^{G})^\bot\cap T^{G}$.
Since the bilinear form on $T$ is non-degenerate, we have $y=0$.
\end{proof}
\begin{lemme}\label{latima}

Let $T$ be an unimodular lattice and $G=\left\langle g\right\rangle$ be a group of prime order $p$ acting on $T$. 
$$(T^{G})^{\vee}=\left\{\left.\frac{y+g(y)+...+g^{p-1}(y)}{p}\right|\ y\in T \right\}.$$
\end{lemme}
\begin{proof}
For all $x\in T^{G}$ and all $y\in T$,
$$\frac{y+g(y)+...+g^{p-1}(y)}{p}\cdot x= y\cdot x.$$
Hence: 
$$(T^{G})^{\vee}\supset\left\{\left.\frac{y+g(y)+...+g^{p-1}(y)}{p}\right|\ y\in T \right\}.$$
Now, we prove the converse inclusion.
Let $x$ be a primitive element of $T^{G}$ and $q\in\mathbb{N}^{*}$ such that $\frac{x}{q}\in (T^{G})^{\vee}$. Then, it provides $\overline{\frac{x}{q}}\in A_{T^{G}}$.
By (\ref{DiscrUni}), there is $z\in T$ and $y\in (T^G)^\bot$ such that $z=\frac{x+y}{q}$. 
Then, $z+g(z)+...+g^{p-1}(z)=\frac{p}{q}x+\frac{y+g(y)+...+g^{p-1}(y)}{q}$.
But by Lemma \ref{BasicLatticeQuotientLemma},
$y+g(y)+...+g^{p-1}(y)=0$.
Hence, $z+g(z)+...+g^{p-1}(z)=\frac{p}{q}x$. Since $x$ is primitive in $T^{G}$, $q$ divides $p$. Hence, $q=1$ or $q=p$.
If $q=p$, we get $z+g(z)+...+g^{p-1}(z)=x$ so that $\frac{x}{q}=\frac{z+g(z)+...+g^{p-1}(z)}{p}$. If $q=1$, we can write $x=\frac{x+...+x}{p}=\frac{x+g(x)+...+g^{p-1}(x)}{p}$ since $x\in T^{G}$. 
\end{proof}
From a basic calculation (see proof of \cite[Lemma 3.1]{SmithTh}), we obtain:
\begin{lemme}\label{adequi}
Let $T$ be a free $\Z$-module and $G=\left\langle g\right\rangle$ be a group of prime order $p$ acting on $T$. We denote $\tau=g-\id$ and $\sigma=\id+g+g^2+...+g^{p-1}$. Moreover, we denote by $\overline{\tau}, \overline{\sigma}\in \F[G]$ respectively the reduction of $\tau$ and $\sigma$ modulo $p$.
Let $(v_{1},...,v_{q})$ be the canonical basis of the $\F[G]$-module $N_{q}$ defined in Section \ref{BoissiereSartiH2}; that is $g (v_{1})=v_{1}$ and $g (v_{i})=v_{i}+v_{i-1}$ for all $i\geq 2$.
Then the following hold:
\begin{itemize}
\item[(i)]
$\ker(\overline{\tau})=\left\langle v_{1}\right\rangle$ and $\Image(\overline{\tau})=\left\langle v_{1},...,v_{q-1}\right\rangle$,
for all $q\leq p$.
\item[(ii)]
$\ker(\overline{\sigma})=N_{q}$ and $\Image(\overline{\sigma})=0$, for all $q<p$.
Moreover, $\ker(\overline{\sigma})=\left\langle v_{1},...,v_{q-1}\right\rangle$ and $\Image(\overline{\sigma})=\left\langle v_{1}\right\rangle$, if $q=p$.
\end{itemize}
\end{lemme}
\begin{lemme}\label{?}
Let $T$ be a free $\Z$-module and $G=\left\langle g\right\rangle$ be a group of prime order $p$ acting on $T$.
Following notation of Section \ref{BoissiereSartiH2}, we have:
$$T\otimes\F\simeq \oplus_{q=1}^{p} N_{q}^{\oplus \ell_{q}(T)}.$$
For all $z\in T$, we denote now by $\overline{z}\in T\otimes\F$ its reduction modulo $p$.

Let $x\in T^G$, then there exists $y\in T$ such that $x=y+g(y)+...+g^{p-1}(y)$ if and only if $\overline{x}\in N_{p}^{\oplus \ell_{p}(T)}$.
\end{lemme}
\begin{proof}
Assume that $x=y+g(y)+...+g^{p-1}(y)$, with $y\in T$.
If $\overline{x}=0$, then $\overline{x}\in N_{p}^{\oplus \ell_{p}(T)}$.
Now we assume that $\overline{x}\neq 0$.
Then, $\overline{y}\notin \ker \overline{\sigma}$, so by Lemma \ref{adequi}, (ii), $\overline{y}\in N_{p}^{\oplus \ell_{p}(T)}$.
Hence, $\overline{x}\in N_{p}^{\oplus \ell_{p}(T)}$.

Conversely, assume that $\overline{x}\in N_{p}^{\oplus \ell_{p}(T)}$, we can write $\overline{x}=\sum_{i} a_{i} v_{1,i}$, where $v_{1,i}$ are invariant elements of the direct summands of $N_{p}^{\oplus \ell_{p}(T)}$ (see Lemma \ref{adequi}, (i)).
But, we have $v_{1,i}=v_{p,i}+g(v_{p,i})+...+g^{p-1}(v_{p,i})$ by Lemma \ref{adequi}, (ii).
The result follows.
\end{proof}
\begin{prop}\label{BasicLatticeQuotient3}
Let $T$ be an unimodular lattice and let $G=\left\langle g\right\rangle$ be a group of prime order $p$ acting on $T$. Then:
$$\frac{T}{T^G\oplus (T^G)^\bot}=(\Z/p\Z)^{\ell_p(T)}.$$
\end{prop}
\begin{proof}
Let $T\otimes\F\simeq \oplus_{q=1}^{p} N_{q}^{\oplus \ell_{q}(T)}$ be the isomorphism of $\F[G]$-module provided in Section \ref{BoissiereSartiH2}.
By Lemma \ref{latima} and \ref{?}, we can define a surjective morphism:
$$\xymatrix@R=0pt{(T^G)^\vee\ar[r] & (N_{p}^G)^{\oplus \ell_{p}(T)}\\
\frac{x}{p}\ar[r]& \overline{x},}$$
where $\overline{x}$ denotes the reduction modulo $p$ of $x$.
Moreover, $\frac{x}{p}\in T^G\Leftrightarrow p|x\Leftrightarrow\overline{x}=0$.
Hence, the previous morphism provides an isomorphism
$$A_{T^G}\simeq (N_{p}^G)^{\oplus \ell_{p}(T)}.$$
Since $N_{p}^G\simeq(\Z/p\Z)$, the result follows by (\ref{DiscrUni}).
\end{proof}
Now,
by Proposition \ref{BasicLatticeQuotient3} and (\ref{BasicLatticeTheory}), we have 
\begin{equation}
\discr \left(T^{G}\oplus (T^{G})^\bot\right)=p^{2\ell_p(T)}.
\label{discrofTG}
\end{equation}
So, from (\ref{BasicLatticeTheory2}) and (\ref{discrofTG}), we obtain the discriminant of $T^G$:
\begin{prop}\label{BasicLatticeQuotient2}
Let $T$ be an unimodular lattice and let $G=\left\langle g\right\rangle$ be a group of prime order $p$ acting on $T$. Then:
$$\discr T^{G}=p^{\ell_p(T)}.$$
\end{prop}
Now, we conclude the proof of Proposition \ref{MainBasicLatticeQuotient}.
Since $T$ is unimodular, we know by Proposition \ref{BasicLatticeQuotient3} that the discriminant group $A_{T^G}:=\frac{(T^G)^\vee}{T^G}$ of $T^G$ is isomorphic to $\frac{T}{T^G\oplus (T^G)^\bot}=(\Z/p\Z)^{\ell_p(T)}$.
Hence, by Proposition \ref{BasicLatticeQuotient2} and (\ref{BasicLatticeTheory}):
$$\discr  (T^G)^\vee=p^{-\ell_p(T)}.$$
It follows: 
$$\discr  (T^G)^\vee(p)=p^{\rk T^G-\ell_p(T)}.$$
So, by Corollary \ref{BasicLatticeQuotient}, in the case $p\leq 19$, we obtain:
$$\discr  (T^G)^\vee(p)=p^{\ell_+(T)}.$$
\section{The $\Z[G]$-module structure of 
the cohomology lattice}\label{Application to cohomology}
In all Section \ref{Application to cohomology}, $X$ be a compact connected orientable manifold of dimension $n$ and $G=\left\langle g\right\rangle$ be an automorphism group of prime order $p$. We denote by $\pi:X\rightarrow X/G$ the quotient map.
\subsection{Coefficients of surjectivity}\label{DefinitionIntro}
The following proposition is straight-forward from \cite[Theorem 5.4 and Corollary 5.8]{Transfers} (it is a generalization from homology to cohomology of original results of Smith \cite{Smith})
\begin{prop}\label{SmithProp}
We can define a push-forward map $\pi_{*}:H^{*}(X,\Z)\rightarrow H^{*}(X/G,\Z)$ such that:
$$\pi_{*}\circ\pi^{*}=p\id_{H^{*}(X/G,\Z)}, \ \ \ \ \ \ \pi^{*}\circ\pi_{*}=\sum_{g\in G}{g^{*}}.$$
\end{prop}
From this proposition, let us examine the torsion of $H^{*}(X/G,\Z)$.
Notice that $H^*(X,\Z)$ can be decomposed into three parts: the torsion-free part, the $p$-torsion part and the other torsion part. 
\begin{rmk}
Denote by $H^*_{o-t}(X,\Z)$ the torsion part of $H^*(X,\Z)$ without the $p$-torsion part. By using Proposition \ref{SmithProp}
$$H^*_{o-t}(X,\Z)^G\simeq H^*_{o-t}(X/G,\Z).$$
\end{rmk}
Hence, the torsion which is not $p$-torsion does not provide any difficulties. On the other hand, the technique we propose does not say anything about 
the $p$-torsion part of $H^*(X/G,\Z)$.
So, in this paper, we will study only the torsion-free part of $H^*(X/G,\Z)$.
\begin{nota}
Let $T$ be a $\Z$-module. We denote: 
\begin{itemize}
\item
$T_f:=T/\tors$
\item
$T_{p-f}:=T/(p-\tors)$
\end{itemize}
Let $Y$ be a topological space and $p$ a prime number. 
Set:
\begin{itemize}
\item
 $H^*_f(Y,\Z):=H^*(Y,\Z)/\tors$, 
 \item
 $H^*_{p-f}(Y,\Z):=H^*(Y,\Z)/(p-\tors)$.
 \end{itemize}
\end{nota}
We want to calculate the 
integer $\alpha_{k}(X)$ provided by the  
exact sequence (\ref{MainExactSequence}):
$$\xymatrix{ 0\ar[r]&\pi_{*}(H^{k}(X,\Z))_f\ar[r] & H^{k}_f(X/G,\Z)\ar[r] & (\Z/p\Z)^{\alpha_{k}(X)}\ar[r]& 0.}$$
\begin{defi}\label{CoefficientOfSurjectivity}
The integer $\alpha_{k}(X)$ is called the $k$-th coefficient of surjectivity of $(X,G)$.
\end{defi}
Hence, with our notation, $\alpha_k(X)$ is expressed as follows:
$$\frac{H^k_{p-f}(X/G,\Z)}{\pi_*(H^k(X,\Z))_{p-f}}=\frac{H^k_{f}(X/G,\Z)}{\pi_*(H^k(X,\Z))_{f}}=(\Z/p\Z)^{\alpha_k(X)}.$$
Then, from (\ref{BasicLatticeTheory}), we obtain the following formula:
\begin{equation}
\sum_{k=0}^{n} \alpha_{k}(X)=\frac{\log_p\discr \pi_{*}(H^{*}(X,\Z))_f-\log_p\discr H^{*}_f(X/G,\Z)}{2}.
\label{BasicAlphaEquality0}
\end{equation}
This formula is available under many different versions considering natural sublattices of $H^{*}(X,\Z)$ as $H^{k}(X,\Z)\oplus H^{n-k}(X,\Z)$:
\small
\begin{equation}
\alpha_{k}(X)+\alpha_{n-k}(X)=\frac{\log_p\discr \left(\pi_{*}\left(H^{k}(X,\Z)\oplus H^{n-k}(X,\Z)\right)_f\right)-\log_p\discr \left(H^{k}_f(X/G,\Z)\oplus H^{n-k}_f(X/G,\Z)\right)}{2}.
\label{BasicAlphaEquality02}
\end{equation}
\normalsize
In the rest of this section, we will see how this formula leads to an upper bound on the coefficients of surjectivity.
\subsection{The push-forward lattice}\label{pushforward}
Now in order to apply Proposition \ref{MainBasicLatticeQuotient}, in this section we prove the following proposition.
\begin{prop}\label{MainDiscrCoho}
Let $T$ be a unimodular sublattice of $H^*(X,\Z)$ stable under the action of $G$.
Then, we have an isometry: $$\pi_*(T)_f\simeq (T^G)^\vee(p).$$
\end{prop}
Before proving the proposition, the following lemma allows us to understand the cup-product form of the lattice $\pi_*(H^*(X,\Z)^G)$.
\begin{lemme}\label{commut}
\begin{itemize}
\item[(i)]
Let $0\leq k \leq n$, $q$ an integer such that $kq\leq n$ and $x\in H^{k}(X,\Z)^{G}$.
Then: $$\pi_{*}(x)^{q}=p^{q-1}\pi_{*}(x^{q})+(p-\tors).$$ If moreover $H^{kq}(X,\Z)$ is $p$-torsion-free, then the property that $\pi_{*}(x)$ is divisible by $p$ implies that $\pi_{*}(x^{q})$ is divisible by $p$.
\item[(ii)]
Let $(x_{i})_{1\leq i \leq q}$ be elements of $H^{k_i}(X,\Z)^{G}$ with $(k_i)_{1\leq i \leq q}$ integers such that $k_1+...+k_q\leq n$, then:
$$\pi_{*}(x_{1})\cdot...\cdot \pi_{*}(x_{q})=p^{q-1}\pi_{*}(x_{1}\cdot...\cdot x_{q})+(p-\tors).$$
\end{itemize}
\end{lemme}
\begin{proof}
\begin{itemize}
\item[(i)]
By Proposition \ref{SmithProp}:
$$\pi^{*}(\pi_{*}(x)^{q})= p^{q}x^{q}=\pi^{*}(p^{q-1}\pi_{*}(x^{q})).$$
The map $\pi^{*}$ is injective on the $p$-torsion-free part, which implies the claimed equality.
If moreover $\pi_{*}(x)$ is divisible by $p$, we can write 
$\pi_{*}(x)=py$ with $y\in H^{kq}(X/G,\Z)$.
This gives:
\begin{equation}
p^{q}y^{q}=p^{q-1}\pi_{*}(x^{q})+(p-\tors).
\label{sameproofcommut}
\end{equation}
We cannot divide by $p^{q-1}$ because of the possible torsion of $H^{kq}(X/G,\Z)$. We will use the fact that $H^{kq}(X,\Z)$ is $p$-torsion-free to get around the problem.
Applying $\pi^{*}$ to this equality, we obtain:
$$p^{q}\pi^{*}(y^{q})=p^{q}x^{q}+\pi^{*}(p-\tors).$$
Since $H^{kq}(X,\Z)$ is $p$-torsion-free, $\pi^{*}(p-\tors)=0$, and we have
$$\pi^{*}(y^{q})=x^{q}.$$
Pushing down by $\pi_{*}$, we obtain: $$py^{q}=\pi_{*}(x^{q}).$$
\item[(ii)]
Same proof as (\ref{sameproofcommut}).
\end{itemize}
\end{proof}
\begin{rmk}
Very often in this paper, we will be working in $H_f^*(X/G,\Z)$, in this case, 
the torsion part in the previous equations can be omitted. 
In particular, when $k_1+...+k_q=n$, identifying $H^n(X,\Z)$ and $H_f^n(X/G,\Z)$ with $\Z$, we get:
$$\pi_{*}(x_{1})\cdot...\cdot \pi_{*}(x_{q})=p^{q-1}\pi_{*}(x_{1}\cdot...\cdot x_{q})=p^{q-1}x_{1}\cdot...\cdot x_{q}.$$
\end{rmk}
Now, we prove Proposition \ref{MainDiscrCoho}.
By Lemma \ref{latima}, we have:
$$\pi_{*}(T)_f\supset\pi_{*}((T^{G})^\vee).$$
Indeed, let $z\in (T^{G})^\vee$, by Lemma \ref{latima} there exist $y\in T$ such that $z=\frac{y+g(y)+...+g^{p-1}(y)}{p}$.
So, $\pi_*(y)=\pi_*(z)$.
Now let us prove the other inclusion:
$$\pi_{*}(T)_f\subset\pi_{*}((T^{G})^\vee).$$
Let $z\in T$, by Proposition \ref{BasicLatticeQuotient3}, we can write $z=\frac{x+y}{p}$ with $x\in T^G$ and $y\in (T^G)^\bot$. 
Hence, by Lemma \ref{BasicLatticeQuotientLemma}, we obtain $x=z+g(z)+...+g^{p-1}(z)$.
Hence, $\pi_*(z)=\pi_*(\frac{x}{p})\in \pi_{*}((T^{G})^\vee)$.
It follows $$\pi_{*}(T)_f=\pi_{*}((T^{G})^\vee).$$
Then, by Lemma \ref{commut} (ii), we have an isometry:
$$\pi_{*}(T)_f\simeq(T^{G})^\vee(p).$$
That proves Proposition \ref{MainDiscrCoho}.

Moreover, combining Proposition \ref{MainBasicLatticeQuotient} and \ref{MainDiscrCoho}, we obtain the following corollary.
\begin{cor}\label{MainBasicCor}
Let $T$ be a unimodular sublattice of $H^*(X,\Z)$ stable under the action of $G$;
then
$$\discr \pi_*(T)_f=p^{\rk T^G-\ell_{p}(T)}.$$
If moreover $p\leq 19$, we have:
$$\discr \pi_*(T)_f=p^{\ell_{+}(T)}.$$
\end{cor}
\subsection{Upper bound for the coefficients of surjectivity}
If we apply Corollary \ref{MainBasicCor}, to the lattice $H^{k}_f(X,\Z)\oplus H^{n-k}_f(X,\Z)$, we obtain:
\begin{equation}
\discr \pi_*(H^{k}(X,\Z)\oplus H^{n-k}(X,\Z))_f=p^{\rk \left(H^{k}_f(X,\Z)^G\oplus H^{n-k}_f(X,\Z)^G\right)-\ell_p^k(X)-\ell_p^{n-k}(X)}.
\label{BasicAlphaEquality}
\end{equation}
Using (\ref{BasicAlphaEquality02}), it follows that
\begin{equation}
\alpha_k(X)+\alpha_{n-k}(X)\leq \frac{\rk \left(H^{k}_f(X,\Z)^G\oplus H^{n-k}_f(X,\Z)^G\right)-\ell_p^k(X)-\ell_p^{n-k}(X)}{2}.
\label{BasicAlphaInequality}
\end{equation}
\begin{prop}\label{BasicAlphaInequality2}
For all $0\leq k \leq n$:
\begin{itemize}
\item[(i)]
$\rk H_f^k(X,\Z)^G=\rk H_f^{n-k}(X,\Z)^G$,
\item[(ii)]
$\ell_*^k(X)=\ell_*^{n-k}(X),$
where $*$ means $+$, $-$ or $q\in\left\{1,...,p\right\}$.
\end{itemize}
\end{prop}
\begin{proof}
By Poincar\'e duality, we have:
\begin{equation}
\xymatrix@R=0pt{H^k_f(X,\Z)\ar[r] & H^{n-k}_f(X,\Z)^\vee\\
x\ar[r]& f_x:=(y\mapsto x\cdot y),}
\label{Poincare}
\end{equation}
which is an isomorphism.
Moreover, we can endow $H^{n-k}_f(X,\Z)^\vee$ with a structure of $\Z[G]$-module by setting $g\cdot f=f(g^{-1}\cdot\ )$ for all $f\in H^{n-k}_f(X,\Z)^\vee$ and $g\in G$.
With this structure, the Poincar\'e isomorphism becomes an isomorphism of $\Z[G]$-modules.
So, it shows that:
$$\ell_*^k(X)=\ell_*(H^k_f(X,\Z))=\ell_*(H^{n-k}_f(X,\Z)^\vee).$$ 
Now, it remains to show that 
$$\ell_*(H^{n-k}_f(X,\Z)^\vee)=\ell_*(H^{n-k}_f(X,\Z))=\ell_*^{n-k}(X).$$
At this stage of the proof, the case $p=2$ differs from the case $p\geq3$.

Let us first assume that $p\geq3$.
We have to prove that, for $1\leq q\leq p$, the $\F[G]$-modules $N_q$ and $N_q^\vee$ are isomorphic (where $N_q$ was defined in Section \ref{BoissiereSartiH2}).
The matrix of the action of $g$ on $N_q^\vee$ is given by the inverse of the transpose matrix of the Jordan block of size $q$:
$$\left(\begin{pmatrix}
1 & 1 & & &  \\
 & \ddots & \ddots &{\fontsize{1cm}{0.5cm}\selectfont\text{0}} & \\
 & & \ddots & \ddots & \\
 & {\fontsize{1cm}{0.5cm}\selectfont\text{0}}& & \ddots & 1\\
 & & & & 1
\end{pmatrix}^{-1}\right)^t.$$
Denote by $J_q$ the matrix of the Jordan block of size $q$.
The minimal polynomial of $J_q$ is $(X-1)^q$.
However, $((J_q^{-1})^t)^q=((J_q^q)^{-1})^t=I_q$,
with $I_q$ the identity matrix of size $q$.
So, the minimal polynomial of $(J_q^{-1})^t$ divides $(X-1)^q$.
Let $0\leq d\leq q$ minimal integer such that $((J_q^{-1})^t)^d=I_q$.
However, this implies $J_q^d=I_q$.
So, $d=q$.
It follows that the Jordan form of $(J_q^{-1})^t$ is given by $J_q$.
Hence, $N_q\simeq N_q^\vee$ as $\F[G]$-module. This proves the result for $p\geq3$.

Now assume that $p=2$.
It is clear that the dual of $\Z$ endowed with the trivial $\Z[G]$-module structure is again $\Z$ endowed with the trivial $\Z[G]$-module structure.
The $\Z[G]$-module $\mathcal{O}_{K}$ defined in Section \ref{BoissiereSartiH2}, is only $\Z$ endowed with the $\Z[G]$-module structure given by $g\cdot t=-t$ for all $t\in \Z$.
Hence, we also have an isomorphism of $\Z[G]$-module between $\mathcal{O}_{K}$ and $\mathcal{O}_{K}^\vee$ endowed with the $\Z[G]$-module structure given by 
$g\cdot f=f(g^{-1}\cdot\ )$ for all $f\in\mathcal{O}_{K}^\vee$.
Let us consider now $(\mathcal{O}_{K},a)^\vee$ endowed with the $\Z[G]$-module structure given by 
$g\cdot f=f(g^{-1}\cdot\ )$ for all $f\in (\mathcal{O}_{K},a)^\vee$.
We can apply Proposition \ref{defipro}, (i) to $(\mathcal{O}_{K},a)^\vee$. 
If $\ell_{-}((\mathcal{O}_{K},a)^\vee)$ and $\ell_{+}((\mathcal{O}_{K},a)^\vee)$ are not both zero, we obtain a contradiction by considering the double dual which is the dual of $(\mathcal{O}_{K},a)^\vee$ and is canonically isomorphic to $(\mathcal{O}_{K},a)$.
Hence, there exists $b\in \mathcal{O}_{K}$ such that $(\mathcal{O}_{K},a)^\vee\simeq (\mathcal{O}_{K},b)$ as $\Z[G]$-module. This concludes the proof of (ii).

It remains to prove (i).
By Poincar\'e duality isomorphism (\ref{Poincare}), we only have
to show that $\rk H^{n-k}_f(X,\Z)^G=\rk (H^{n-k}_f(X,\Z)^\vee)^G$,
where the action of $G$ on $H^{n-k}_f(X,\Z)^\vee$ is given by
$g\cdot f=f(g^{-1}\cdot\ )$ for all $f\in H^{n-k}_f(X,\Z)^\vee$.
This is equivalent to show that
$\dim H^{n-k}(X,\Q)^G =\dim (H^{n-k}(X,\Q)^\vee)^G$,
which is readily verified considering a basis $\mathcal{B}=\left\{v_1,...,v_d,v_{d+1},...,v_{m}\right\}$ of $H^{n-k}(X,\Q)$ 
where $\left\{v_1,...,v_d\right\}$ is a basis of $H^{n-k}(X,\Q)^G$
and taking the dual basis of $\mathcal{B}$.
\end{proof}

Then, it follows from (\ref{BasicAlphaInequality}) (and from Corollary \ref{BasicLatticeQuotient} in the case $p\leq 19$):
\begin{prop}\label{MainBasicAlphaInequality}
For all $0\leq k \leq n$:
$$\alpha_k(X)+\alpha_{n-k}(X)\leq \rk H^{k}_f(X,\Z)^G-\ell_p^k(X);$$
if moreover $p\leq 19$, then
$$\alpha_k(X)+\alpha_{n-k}(X)\leq \ell_+^k(X).$$
Furthermore, these inequalities become equalities when $X/G$ is smooth.
\end{prop}
\begin{proof}
The first inequality follows from (\ref{BasicAlphaInequality}) and Proposition \ref{BasicAlphaInequality2}. Then, we deduce the second one from Corollary \ref{BasicLatticeQuotient}.
When $X/G$ is smooth, the lattice $H^k_f(X/G,\Z)\oplus H^{n-k}_f(X/G,\Z)$ is unimodular.
Therefore, (\ref{BasicAlphaEquality02}), (\ref{BasicAlphaEquality}) and Proposition \ref{BasicAlphaInequality2} provide the equality.
\end{proof}
\begin{rmk}\label{MainBasicAlphaInequalityRemark}
When $n=2m$ is even and $k=m$, we calculated the discriminant of two copies of the lattice $\pi_*(H^{m}_f(X,\Z))$ and the inequalities of Proposition \ref{MainBasicAlphaInequality} are: 
$$\alpha_{m}(X)\leq \frac{\rk H^{m}_f(X,\Z)^G-\ell_p^{m}(X)}{2};$$
and if $p\leq19$:
$$\alpha_{m}(X)\leq \frac{\ell_+^{m}(X)}{2}.$$
\end{rmk}
\subsection{The $H^k$-normality}
A particular case of interest is when a coefficient of surjectivity $\alpha_k(X)$ vanishes, for some $k$. In such a case, the push-forward map 
 $\pi_{*}: H^{k}_{p-f}(X,\Z)\rightarrow H^{k}_{p-f}(X/G,\Z)$ is surjective.
\begin{defi}
Let $0\leq k\leq n$. 
When $\alpha_k(X)=0$, we say that $(X,G)$ is \emph{$H^{k}$-normal}.
\end{defi}
\begin{prop}\label{lalalalala}
Let $0\leq k\leq n$. The following statements are equivalent:
\begin{itemize}
\item[(i)]
$(X,G)$ is $H^k$-normal.
\item[(ii)]
For $x\in H^{k}_{f}(X,\Z)^{G}$, $\pi_{*}(x)$ is divisible by $p$ if and only if there exists $y\in H^{k}_{f}(X,\Z)$ such that 
$x=y+g(y)+...+g^{p-1}(y)$.
\end{itemize}
\end{prop}
\begin{proof}
We assume (ii). Let $z\in H^{k}_{f}(X/G,\Z)$, by (\ref{MainExactSequence}), we can find $x\in H^{k}_{f}(X,\Z)$ such that $z=\frac{\pi_*(x)}{p}$.
So by (ii), there exists $y\in H^{k}_{f}(X,\Z)$ such that $x=y+g(y)+...+(g)^{p-1}(y)$. Hence $z=\pi_*(y)$. 

Now, assume (i).
Let $x\in H^{k}_{f}(X,\Z)^{G}$ such that $p$ divides $\pi_{*}(x)$. Since $\pi_{*}: H^{k}_{f}(X,\Z)\rightarrow H^{k}_{f}(X/G,\Z)$ is surjective, there is $y\in H^{k}_{f}(X,\Z)$ such that $p\pi_{*}(y)=\pi_{*}(x)$. We apply $\pi^{*}$ and Proposition \ref{SmithProp} to get $p(y+g(y)+...+g^{p-1}(y))=px$.
\end{proof}
We provide two important properties of $H^k$-normality.
First, in certain circumstances, it is possible to deduce the $H^{k}$-normality from $H^{kt}$-normality. 
\begin{prop}\label{Hkt}
We assume $p\leq 19$ and $H^{*}(X,\Z)$ $p$-torsion-free (so $H^{*}(X,\Z)\otimes\F=H^{*}(X,\F)$).
Let $0\leq k \leq n$, $t$ an integer such that $kt\leq n$. Assume that $(X,G)$ is $H^{kt}$-normal. For all $x\in H^*(X,\Z)$, we denote by $\overline{x}\in H^{*}(X,\F)$ its reduction modulo $p$.
If 
\begin{align*}
\mathscr{S}:\ &\Sym^t H^{k}(X,\F)\rightarrow H^{kt}(X,\F)\\
&\overline{x_{1}}\otimes...\otimes \overline{x_{t}}\mapsto \overline{x_{1}\cdot...\cdot x_{t}}
\end{align*}
is injective and $\mathscr{S}(\Sym^t H^{k}(X,\F))$ admits a complementary vector space, stable by the action of $G$, then $(X,G)$ is $H^{k}$-normal.
\end{prop}
\begin{proof}
We use the following notation for the Jordan decompositions of $H^{k}(X,\F)$ and of $H^{kt}(X,\F)$:
\begin{equation}
H^{k}(X,\F)=\sum_{q=1}^{p} N_{q}^{\oplus l_{q}^{k}(X)}:=\sum_{q=1}^{p} \mathcal{N}_{q}^k\ \text{and}\ H^{kt}(X,\F)=\sum_{q=1}^{p} N_{q}^{\oplus l_{q}^{kt}(X)}:=\sum_{q=1}^{p}\mathcal{N}_{q}^{kt}.
\label{normality1}
\end{equation}
Moreover, since $p\leq 19$:
$$H^{k}(X,\F)=\mathcal{N}_{1}^k\oplus \mathcal{N}_{p-1}^k\oplus\mathcal{N}_{p}^k\ \text{and}\ H^{kt}(X,\F)=\mathcal{N}_{1}^{kt}\oplus \mathcal{N}_{p-1}^{kt}\oplus\mathcal{N}_{p}^{kt}.$$
Hence, by Proposition \ref{defipro}:
\begin{equation}
H^{k}(X,\F)^G=\mathcal{N}_{1}^k\oplus(\mathcal{N}_{p}^k)^G.
\label{normality11}
\end{equation}

Let $x\in H^{k}(X,\Z)^{G}$, we are going to use the characterization of Proposition \ref{lalalalala}.
We assume that there is no $y\in H^{k}(X,\Z)$ such that 
$x=y+g(y)+...+g^{p-1}(y)$ and we show that $\pi_{*}(x)$ is not divisible by $p$.

By Lemma \ref{?}, $\overline{x}\notin \mathcal{N}_{p}^k$.
Since $\mathscr{S}$ is injective and $N_{1}^{\otimes t}=N_{1}$, by (\ref{normality11}), we have $\overline{x^{t}}\notin \mathcal{N}_{p}^{kt}$.
Again by Lemma \ref{?}, there is no $z\in H^{kt}(X,\Z)$ such that 
$x^{t}=z+g(z)+...+g^{p-1}(z)$.
Since  $(X,G)$ is $H^{kt}$-normal, $\pi_{*}(x^{t})$ is not divisible by $p$.
Now, since $H^{kt}(X,\Z)$ is $p$-torsion-free, by Lemma \ref{commut} (i), $\pi_{*}(x)$ is not divisible by $p$.
\end{proof}
We will apply this proposition to an example in Section \ref{K3typeApplication}. 
Now, we introduce the notion of the \emph{pullback} of a pair $(X,G)$.

\begin{defi}\label{pullback}

Let $s:\widetilde{X}\rightarrow X$ be a surjective morphism such that $\widetilde{X}$ is a compact connected orientable manifold of dimension $n$ and suppose that there is an automorphism $\widetilde{g}$ of order $p$ of $\widetilde{X}$ which verifies $s\circ \widetilde{g}=g \circ s$. Moreover, we assume that the degree of $s$ is not divisible by $p$. We denote $\left\langle \widetilde{g}\right\rangle$ by $\widetilde{G}$.

The triple $(\widetilde{X},\widetilde{G},s)$ will be called a \emph{pullback} of $(X,G)$.
\end{defi}
\begin{lemme}\label{IlLeFautBien}
Let $(\widetilde{X},\widetilde{G},s)$ be a pullback of $(X,G)$. We denote by $\widetilde{\pi}: \widetilde{X}\rightarrow\widetilde{X}/\widetilde{G}$ the quotient map and by $r:\widetilde{X}/\widetilde{G}\rightarrow X/G$ the induced map. Let $0\leq k \leq n$ and $x\in H^{k}(X,\Z)^{G}$. 
Then:
$$\widetilde{\pi}_{*}(s^{*}(x))=r^{*}(\pi_{*}(x))+(p-\tors).$$
If moreover $H^{k}(\widetilde{X},\Z)$ is $p$-torsion-free, then the property that $r^{*}(\pi_{*}(x))$ is divisible by $p$ implies that $\widetilde{\pi}_{*}(s^{*}(x))$ is divisible by $p$.
\end{lemme}
\begin{proof}
The proof follows the same idea as the proof of Lemma \ref{commut}.

We have a Cartesian diagram

$$\xymatrix{
 \widetilde{X}\ar[d]^{\widetilde{\pi}} \ar[r]^{s}& X\ar[d]^{\pi} \\
    \widetilde{X}/\widetilde{G}\ar[r]^{r}  & X/G.
   }$$
It induces a commutative diagram in cohomology:

$$\xymatrix{
H^{k}(X/G,\Z)\ar@/_/[r]_{\pi^{*}} \ar[d]_{r^{*}}& H^{k}(X,\Z)\ar@/_/[l]_{\pi_{*}}\ar[d]^{s^{*}}\\
H^{k}(\widetilde{X}/\widetilde{G},\Z)\ar@/^/[r]^{\widetilde{\pi}^{*}}& \ar@/^/[l]^{\widetilde{\pi}_{*}} H^{k}(\widetilde{X},\Z).
}$$
It follows from Proposition \ref{SmithProp} that
$$\widetilde{\pi}^{*}(r^{*}(\pi_{*}(x)))=s^{*}(\pi^{*}(\pi_{*}(x)))=p\cdot s^{*}(x)= \widetilde{\pi}^{*}(\widetilde{\pi}_{*}(s^{*}(x))).$$
The map $\widetilde{\pi}^{*}$ is injective on the torsion-free part, so we get the equality.
If moreover $r^{*}(\pi_{*}(x))$ is divisible by $p$, we can write
$r^{*}(\pi_{*}(x))=py$ with $y\in H^{k}(\widetilde{M},\Z)$.
Thus gives:
$$\widetilde{\pi}_{*}(s^{*}(x))+(p-\tors)= py.$$
Applying $\widetilde{\pi}^{*}$, we get:
$$ps^{*}(x)=p\widetilde{\pi}^{*}(y).$$
Since $H^{k}(\widetilde{X},\Z)$ is $p$-torsion-free, this is also the case for the group $s^{*}(H^{k}(X,\Z))$, hence:
$$\widetilde{\pi}^{*}(y)=s^{*}(x).$$
Hence, by applying $\widetilde{\pi}_{*}$, we get $\widetilde{\pi}_{*}(s^{*}(x))=py.$
\end{proof}

\begin{prop}\label{parrynormal}
Let $(\widetilde{X},\widetilde{G},s)$ be a pullback of $(X,G)$. Assume that $H^{*}(X,\Z)$ and $H^{*}(\widetilde{X},\Z)$ are $p$-torsion-free.
Assume that $s^{*}(H^{k}(X,\F))$ admits a complementary vector space, stable under the action of $\widetilde{G}$. Then,
if $(\widetilde{X},\widetilde{G})$ is $H^{k}$-normal then $(X,G)$ is $H^{k}$-normal.
\end{prop}
\begin{proof}
We use the notation for the Jordan decomposition as in (\ref{normality1}).
First note that since $s$ is surjective, $s^{*}: H^{k}(X,\Z)\rightarrow H^{k}(\widetilde{X},\Z)$ is injective too. Then, we can prove that $s^{*}: H^{k}(X,\F)\rightarrow H^{k}(\widetilde{X},\F)$ is injective. Since $H^{*}(X,\Z)$ and $H^{*}(\widetilde{X},\Z)$ are $p$-torsion-free, we only have to verify that $p|s^*(x)$ implies $p|x$ for all $x\in H^{k}_f(X,\Z)$. However, if $s^*(x)$ is divisible by $p$ then $s^*(x)\cdot s^*(y)=s^*(x\cdot y)$ if divisible by $p$ for all $y\in H^{n-k}_f(X,\Z)$. Since the degree of $s$ is not divisible by $p$, it follows that $p$ divides $x\cdot y$ for all $y\in H^{n-k}_f(X,\Z)$. So, by Poincaré duality, it means that $x$ is divisible by $p$.

Now, let $x\in H^{k}(X,\Z)^{G}$, we use the characterization of Proposition \ref{lalalalala}.
We assume that there is no $y\in H^{k}(X,\Z)$ such that 
$x=y+g(y)+...+g^{p-1}(y)$ and we show that $\pi_{*}(x)$ is not divisible by $p$.
By Lemma \ref{?} $\overline{x}\notin \mathcal{N}_{p}$.

Since $s^{*}: H^{k}(X,\F)\rightarrow H^{k}(\widetilde{X},\F)$ is injective and $s^{*}(H^{k}(X,\F))$ admits a complementary vector space, stable under the action of $\widetilde{G}$, $\overline{s^{*}(x)}\notin\mathcal{N}_{p}$.
Hence, by Lemma \ref{?}, there is no $z\in H^{k}(\widetilde{X},\Z)$ such that $s^{*}(x)=z+g(z)+...+g^{p-1}(z)$.
Since $(\widetilde{X},\widetilde{G})$ is $H^{k}$-normal, $\widetilde{\pi}_{*}(s^{*}(x))$ is not divisible by $p$.
Hence, by Lemma \ref{IlLeFautBien}, $r^{*}(\pi_{*}(x))$ is not divisible by $p$.
It follows that $\pi_{*}(x)$ is not divisible by $p$.
\end{proof}
We will see an example of the use of this proposition in Section \ref{SymplecticOrder3}.
\section{Application to equivariant cohomology}\label{EquivariantCohomology}
Let $X$ be a CW-complex and $G$ a group acting on $X$ permuting the cells. 
Let $EG\rightarrow BG$ be a universal $G$-bundle in the category of CW-complexes. 
Denote by $X_{G}=EG\times_{G}X$ the orbit space for the diagonal action of $G$ on the product $EG\times X$ 
and $f: X_{G}\rightarrow BG$ the map induced by the projection onto the first factor. 
The map $f$ is a locally trivial fiber bundle with typical fiber $X$ and structure group $G$.
We define the G-equivariant cohomology of $X$ by $H_{G}^{*}(X,\Z):=H^{*}(EG\times_{G}X,\Z)$.
Then, the Leray--Serre spectral sequence associated to the map $f$ gives a spectral sequence converging to the equivariant cohomology:
$$E_{2}^{p,q}:=H^{p}(G;H^{q}(X,\Z))\Rightarrow H_{G}^{p+q}(X,\Z).$$

Hence, we are interested in calculating the cohomology of our prime order group $G=\left\langle g\right\rangle$, with coefficient in a $\Z$-module.
We have the following projective resolution of $\Z$ considered as a $G$-module:
$$\xymatrix@C=10pt{...\ar[r]^{\tau}& \Z[G]\ar[r]^{\sigma} & \Z[G]\ar[r]^{\tau} & \Z[G]\ar[r]^{\epsilon} & \Z},$$
where $\tau=g-\id$, $\sigma=\id+g+...+g^{p-1}$ and $\epsilon$ is the summation map: $\epsilon(\sum_{j=0}^{p-1} \alpha_{j}g^{j})=\sum_{j=0}^{p-1}\alpha_{j}$.
If $H$ is a $\Z[G]$-module of finite rank over $\Z$, the cohomology of $G$ with coefficients in $H$ is computed as the cohomology of the complex:
$$\xymatrix@C=10pt{0\ar[r]& H\ar[r]^{\tau} & H\ar[r]^{\sigma} & H\ar[r]^{\tau} & ...}.$$
\begin{prop}\label{HEquivSpec}
Assume that $H$ is a $p$-torsion-free finitely generated $\Z$-module and a $\Z[G]$-module with $G=\left\langle g \right\rangle$ group of prime order $p\leq 19$.
Then for all $i\in\mathbb{N}^*$:
\begin{itemize}
\item[(i)]
$H^{0}(G,H)=H^{G}$,
\item[(ii)]
$H^{2i-1}(G,H)=(\Z/p\Z)^{\ell_{-}(H)}$,
\item[(iii)]
$H^{2i}(G,H)=(\Z/p\Z)^{\ell_{+}(H)}$.
\end{itemize}
\end{prop}
\begin{proof}
\begin{itemize}
\item[(i)] 
By definition, $H^{0}(G,H)=\Ker \tau$.
\item[(ii)]
First, we remark that the cohomology of the torsion part is trivial.
Since the torsion part $H_{\tors}$ is $p$-torsion-free, $p$ is always invertible in $H_{\tors}$.
Hence, all $x\in  H_{\tors}\cap \Ker \tau$ can be written $x=\frac{1}{p}(\underbrace{x+...+x}_{p\ \text{times}})\in\Image \sigma;$
and for $x\in H_{\tors}$ such that $x+g(x)+...+g^{p-1}(x)=0$, we can write $x=g(y)-y$ with $y=-\frac{1}{p}\left[(p-1)x+(p-2)g(x)+...+2g^{p-3}(x)+g^{p-2}(x)\right]$.

So, henceforth, we assume that $H$ is torsion-free. In odd degrees, $H^{2i-1}(G,H)=\Ker \sigma/ \Image \tau$.
Using the notation of Section \ref{BoissiereSartiH2}, in the proof of \cite[Theorem 74.3]{Reiner}, it is shown that:
\begin{align*}
&\Ker \sigma=\mathcal{O}_{K}b_{1}\oplus...\oplus\mathcal{O}_{K}b_{r+s-1}\oplus Ab_{r+s},\\
&\Image \tau=E_{1}b_{1}\oplus...\oplus E_{r+s-1}b_{r+s-1}\oplus E_{r+s}Ab_{r+s},
\end{align*}
with $b_{1},...,b_{n}$, $\mathcal{O}_{K}$-free elements in $\Ker \sigma$, $A$ an $\mathcal{O}_{K}$-ideal of $K$ and 
$$E_{1}=...=E_{r}=\mathcal{O}_{K},\ \ \ \ E_{r+1}=...=E_{r+s}=(\xi_{p}-1)\mathcal{O}_{K}.$$
The numbers $r$ and $s$ in the last equalities are the same as in Proposition \ref{defipro}.
Moreover, we find in the proof of \cite[Theorem 74.3]{Reiner} that $\mathcal{O}_{K}/(\xi_{p}-1)\mathcal{O}_{K}=A/(\xi_{p}-1)A=\Z/p\Z$. 
Hence, we get $\Ker \sigma/ \Image \tau=(\Z/p\Z)^{\ell_{-}^{k}(X)}$.
\item[(iii)]
For $i\geq 1$, $H^{2i}(G,H)=\Ker \tau/ \Image \sigma$.
By (iii) of Proposition \ref{defipro}:
$$H^{G}\simeq \oplus_{i=1}^{r} (\mathcal{O}_{K},a_{i})^{G}\oplus \Z^{\oplus t}.$$
By Lemma \ref{?} and Proposition \ref{defipro} (ii), all the elements in $\oplus_{i=1}^{r} (\mathcal{O}_{K},a_{i})^{G}$ can be written as $y+g(y)+...+g^{p-1}(y)$ with $y\in H$. Then, the result follows.
\end{itemize}
\end{proof}
If we apply Proposition \ref{HEquivSpec} to cohomology, we obtain:
\begin{cor}\label{equico}
Let $X$ be a topological space and $G$ be an automorphism group of prime order $p$.
Assume that $H^{*}(X,\Z)$ is $p$-torsion-free and finitely generated with $p\leq 19$. Then, for $0\leq k\leq 2\dim X$ we have:
\begin{itemize}
\item
$H^{0}(G,H^{k}(X,\Z))=H^{k}(X,\Z)^{G}$,
\item
$H^{2i-1}(G,H^{k}(X,\Z))=(\Z/p\Z)^{\ell_{-}^{k}(X)}$,
\item
$H^{2i}(G,H^{k}(X,\Z))=(\Z/p\Z)^{\ell_{+}^{k}(X)}$,
\end{itemize}
for all $i\in\mathbb{N}^{*}$.
\end{cor}
\begin{rmk}
In \cite[Section 3]{SmithTh}, a similar result is shown for cohomology with coefficients in $\F$. 
\end{rmk}

When $G$ acts freely,
$$H^{*}(X/G,\Z)\simeq H_{G}^{*}(X,\Z),$$
where the isomorphism is
induced by the natural map $f : EG\times_{G}X\rightarrow X/G$,
(see for instance \cite{Bott}). 
Hence, when the action of $G$ is free and the spectral sequence of equivariant cohomology is degenerate at the second page, Corollary \ref{equico} will provide the cohomology of $X/G$.

\section{Resolution of singularities}\label{Resolution of singularities}
Let $X$ be a compact connected orientable manifold of dimension $n$ and $G=\left\langle g\right\rangle$ be an automorphism group of prime order $p$.
With the information that $H^*_f(X/G,\Z)$ is a lattice, we have only provided an upper bound for the coefficients of surjectivity (Proposition \ref{BasicAlphaInequality2}). To obtain more information, we need to consider also the geometry of $X/G$. One way to do so is to consider a resolution of singularities.
\subsection{Exceptional lattice of a resolution}
Let $r:\widetilde{X/G}\rightarrow X/G$ be a resolution of singularities and 
consider the following diagram:
$$\xymatrix{
\ar[d]^{\pi_*}H^*(X,\Z)& \\
  H^{*}(X/G,\Z)\ar[r]^{r^*}&H^{*}(\widetilde{X/G},\Z).
 }$$
Remark first that since $r$ is surjective $r^*$ is an injective map. 
\begin{defi}\label{exceptlattice}
The sublattice 
$$N_{k,r}:=r^*\left[\pi_*\left(H^k(X,\Z)\oplus H^{n-k}(X,\Z)\right)\right]^\bot_f$$ of $H^{k}_f(\widetilde{X/G},\Z)\oplus H^{n-k}_f(\widetilde{X/G},\Z)$
will be called the $k$-th exceptional lattice of $r$.
\end{defi}

It follows that $N_{k,r}^\bot$ is a primitive over-lattice of 
$r^*\left[\pi_*\left(H^k(X,\Z)\oplus H^{n-k}(X,\Z)\right)_f\right]$.
We have the following inclusion:
$$r^*\left[\pi_*\left(H^k(X,\Z)\oplus H^{n-k}(X,\Z)\right)_f\right]\subset r^*\left[H^k_f(X/G,\Z)\oplus H^{n-k}_f(X/G,\Z)\right]\subset N_k^\bot.$$
Now, since $r^*$ is injective, it induces the following injective map of $\Z/p\Z$-vector space:
$$\frac{H^k_f(X/G,\Z)\oplus H^{n-k}_f(X/G,\Z)}{\pi_*\left(H^k(X,\Z)\oplus H^{n-k}(X,\Z)\right)_f}\hookrightarrow \frac{N_k^\bot}{r^*\left[\pi_*\left(H^k(X,\Z)\oplus H^{n-k}(X,\Z)\right)_f\right]}.$$
Then, using (\ref{BasicLatticeTheory}) and taking the logarithm in base $p$, it follows that:
\begin{align*}
&\log_p\discr \pi_*\left(H^k(X,\Z)\oplus H^{n-k}(X,\Z)\right)_f-\log_p\discr H^k_f(X/G,\Z)\oplus H^{n-k}_f(X/G,\Z)\\
&\leq \log_p \discr r^*\left[\pi_*\left(H^k(X,\Z)\oplus H^{n-k}(X,\Z)\right)_f\right]-\log_p \discr N_k^\bot.
\end{align*}
By (\ref{BasicAlphaEquality02}):
$$ \alpha_{k}(X)+\alpha_{n-k}(X)\leq\frac{\log_p \discr r^*\left[\pi_*(H^k(X,\Z)\oplus H^{n-k}(X,\Z))_f\right]-\log_p \discr N_k^\bot}{2}.$$
Moreover, since $r^*$ is injective:
\begin{equation}
\discr r^*\left[\pi_*\left(H^k(X,\Z)\oplus H^{n-k}(X,\Z)\right)_f\right]=\discr \pi_*\left(H^k(X,\Z)\oplus H^{n-k}(X,\Z)\right)_f.
\label{equationdefin2}
\end{equation}
Furthermore, $\widetilde{X/G}$ is smooth, so $H^k_f(\widetilde{X/G},\Z)\oplus H^{n-k}_f(\widetilde{X/G},\Z)$ is unimodular. Hence, by (\ref{BasicLatticeTheory2}),
$\discr N_k=\discr N_k^\bot$. It follows that
$$\alpha_{k}+\alpha_{n-k}\leq\frac{\log_p \discr \pi_*\left(H^k(X,\Z)\oplus H^{n-k}(X,\Z)\right)_f-\log_p \discr N_k}{2}.$$
Finally, by (\ref{BasicAlphaEquality}) and Proposition \ref{BasicAlphaInequality2}, we obtain:
\begin{prop}\label{BasicResolution}
Let $X$ be a compact connected orientable manifold of dimension $n$ and $G$ an automorphism group of prime order $p$. Let $r:\widetilde{X/G}\rightarrow X/G$ be a resolution of singularities. Let $N_{k,r}$
be the $k$-th exceptional lattice of $r$.
Then:
$$\alpha_{k}(X)+\alpha_{n-k}(X)\leq\rk H^k_f(X,\Z)^G-\ell_p^k(X)-\frac{1}{2}\log_p \discr N_{k,r}.$$
If moreover, $p\leq19$:
$$\alpha_{k}(X)+\alpha_{n-k}(X)\leq\ell_+^k(X)-\frac{1}{2}\log_p \discr N_{k,r}.$$
\end{prop}
\begin{rmk}\label{BasicResolutionRmk}
When $n=2m$ is even and $k=m$, the discriminant of $N_{m}$ is given by the discriminant of two copies of $N_{m}^{\frac{1}{2}}:=r^*\left[\pi_*\left(H^{m}(X,\Z)\right)\right]^\bot_f$ and the inequalities of Proposition \ref{BasicResolution} are
$$\alpha_{m}\leq\frac{\rk H^{m}_f(X,\Z)^G-\ell_p^{m}(X)-\log_p \discr N_{m}^{\frac{1}{2}}}{2};$$
and, when $p\leq 19$,
$$\alpha_{m}\leq\frac{\ell_+^{m}(X)-\log_p \discr N_{m}^{\frac{1}{2}}}{2}.$$
\end{rmk}
Now, the objective will be to express $N_k$ in term of exceptional divisors and calculate its discriminant.
\subsection{Using blow-ups}\label{blowup}
In this section $X$ is a complex manifold and $G$ an automorphism group of prime order $p$.
The simplest case that we can consider for applying Proposition \ref{BasicResolution} is the following.

Let $s:\widetilde{X}\rightarrow X$ be the blow-up of $X$ in the fixed locus $Z$ of $G$, let $\widetilde{G}$ be the automorphism group induced by $G$ on $\widetilde{X}$. When $X$ is K\"ahler, the cohomology of $\widetilde{X}$ can be calculated using \cite[Theorem 7.31]{Voisin}. 

\begin{thm}(\cite{Voisin})\label{Vois}
Let $X$ be a K\"ahler manifold and let $Z\subset X$ be a submanifold of codimension $d$. 
Let $s:\widetilde{X}\rightarrow X$ be the blow-up of $X$ in $Z$. Let $E=s^{-1}(Z)$ be the exceptional divisor and $j: E\hookrightarrow \widetilde{X}$ be the embedding. 
Let $h=c_{1}(\mathcal{O}_{E}(1))\in H^{2}(E,\Z)$. Then, we have an isomorphism of Hodge structures:
$$\xymatrix{
H^{k}(X,\Z)\oplus \left(\bigoplus_{i=0}^{d-2} H^{k-2i-2}(Z,\Z)\right)\ar[rrrr]^{\ \ \ \ \ \ \ \ \ \ \ \ \tau^{*}+\sum_{i} j_{*}\circ h^{i}\circ \tau_{|E}^{*}} & & & & H^{k}(\widetilde{X},\Z).}$$
Here, $h^{i}$ is the morphism by the cup-product with $h^{i}\in H^{2i}(E,\Z)$. 
\end{thm}
Now if $\widetilde{X}/\widetilde{G}$ is smooth, $\widetilde{X}/\widetilde{G}$ provides a resolution of singularities of $X/G$:
$$\xymatrix{
 \widetilde{X} \ar[r]^{s} \ar[d]^{\widetilde{\pi}} & X,\ar[d]^{\pi} \\
  \widetilde{X}/\widetilde{G} \ar[r]^{r}& X/G 
   }$$
 where the exceptional lattice of $r$ (introduced in Definition \ref{exceptlattice}) could be expressed in terms of the cohomology of the fixed locus $Z$ via Theorem \ref{Vois}.
So, in this section, we will study when such $\widetilde{X}/\widetilde{G}$ is smooth.
\begin{rmk}
The triple $(\widetilde{X},\widetilde{G},s)$ is a pullback of $(X,G)$ (cf. Definition \ref{pullback}).
\end{rmk}

At each fixed point of $G$, by Cartan \cite[Lemma 1]{Cartan} we can locally linearize the action of $G$. Thus, at a fixed point $x\in X$, the action of $G$ on $X$ is locally equivalent to the action of $G=\left\langle g\right\rangle$ on $\mathbb{C}^{n}$ via
$$g=\diag (\xi_{p}^{k_{1}},...,\xi_{p}^{k_{n}}),$$ where $\xi_{p}$ is a p-th root of unity.
Without loss of generality, we can assume that $k_{1}\leq...\leq k_{n}\leq p-1$. 
\begin{lemme}\label{ResolutionLemma}
Let $X$ be a complex manifold of dimension $n$ and $Z$ be a connected component of $\Fix G$, the fixed locus of an automorphism group $G$ of prime order $p$. Let $m=\dim Z$ and $E\rightarrow Z$ be the exceptional divisor of the blow-up $\widetilde{X}\rightarrow X$ of $X$ in $Z$. Let $\widetilde{G}$ be the automorphism group induced by $G$ on $\widetilde{X}$. Assume that the local action of $G$ around a point of $Z$ is given by $$g=\diag (1,...,1,\xi_{p}^{k_{m+1}},...,\xi_{p}^{k_{n}}),$$ then the different local actions of $\widetilde{G}$ around its fixed points contained in $E$ are given by the diagonal matrices:
$$\diag(1,...,1,\xi_{p}^{k_{1+m}-k_{i}},...,\xi_{p}^{k_{i-1}-k_{i}},\xi_{p}^{k_{i}},\xi_{p}^{k_{i+1}-k_{i}},...,\xi_{p}^{k_{n}-k_{i}})$$ for all $m+1\leq i\leq n$.
\end{lemme}
\begin{proof}
Let $G=\left\langle \diag (\xi_{p}^{k_{1}},...,\xi_{p}^{k_{n}})\right\rangle$ acting on $\mathbb{C}^{n}$.
Since $k_{1}=...=k_{m}=0$, we have to consider the blow-up of $\mathbb{C}^{m}\times\mathbb{C}^{n-m}$ in $\mathbb{C}^{m}\times\left\{0\right\}$,
which is given by $\mathbb{C}^{m}\times\widetilde{\mathbb{C}^{n-m}}$ where $\widetilde{\mathbb{C}^{n-m}}$ is the blow-up of $\mathbb{C}^{n-m}$ in $0$.
So, without loss of generality, we can assume that $m=0$.
 
Let $\widetilde{\mathbb{C}^{n}}$ be the blow-up of $\mathbb{C}^{n}$ in 0 and $\widetilde{G}$ the automorphism group of $\widetilde{\mathbb{C}^{n}}$ induced by $G$.
We will describe the action of $\widetilde{G}$ on 
\begin{align*}
\widetilde{\mathbb{C}^{n}}=\left\{\left((x_{1},...,x_{n}),(a_{1}:...:a_{n})\right)\in \right.&\mathbb{C}^{n}\times \mathbb{P}^{n-1}\ |\ \\ &\left.\rk\left((x_{1},...,x_{n}),(a_{1},...,a_{n})\right)=1\right\}.
\end{align*}
We denote by $$\mathscr{O}_{i}=\left\{\left((x_{1},...,x_{n}),(a_{1}:...:a_{n})\right)\in \mathbb{C}^{n}\times \mathbb{P}^{n-1}\ |\
a_{i}\neq 0\right\}$$ the chart $a_{i}\neq 0$.
We have 
\begin{align*}
\widetilde{\mathbb{C}^{n}}\cap\mathscr{O}_{i}=\left\{\left((x_{1},...,x_{n}),(a_{1},...,a_{i-1},a_{i+1},...,a_{n})\right)\in \right.& \mathbb{C}^{n}\times \mathbb{C}^{n-1}\ |\ \\
&\left.x_{j}=x_{i}a_{j},\ j\in \left\{1,...,n\right\}\right\}.
\end{align*}
Hence, we have an isomorphism:
$$\xymatrix@R0pt{
f:\widetilde{\mathbb{C}^{n}}\cap\mathscr{O}_{i}\ar[r]& \mathbb{C}^{n}\\
 \left((x_{1},...,x_{n}),(a_{1}:...:a_{n})\right)\ar[r]& (a_{1},...,a_{i-1},x_{i},a_{i+1},...,a_{n})
}$$
Thus, the action of $\widetilde{G}$ on $\widetilde{\mathbb{C}^{n}}\cap\mathscr{O}_{i}$ provides an action on $\mathbb{C}^{n}$ given by the diagonal matrix $$\diag(\xi_{p}^{k_{1}-k_{i}},...,\xi_{p}^{k_{i-1}-k_{i}},\xi_{p}^{k_{i}},\xi_{p}^{k_{i+1}-k_{i}},...,\xi_{p}^{k_{n}-k_{i}}).$$
\end{proof}
We recall the following property from the proof of \cite[Proposition 6]{Prill}:
\begin{prop}(\cite{Prill})
The quotient $\mathbb{C}^{n}/G$ is smooth if and only if $\rk g-\id=1$; that is  $k_{1}=...= k_{n-1}=0$.
\end{prop}
Hence, we deduce from Lemma \ref{ResolutionLemma} the following.
\begin{cor}\label{singu}
Let $X$ be a complex manifold of dimension $n$ and $G$ an automorphism group of prime order $p$. Let $x\in \Fix G$.
\begin{itemize}
\item[(i)]
The variety $X/G$ is smooth in $\pi(x)$ if and only if the local action around $x$ is given by $\diag (0,...,0,\xi_{p}^k)$.
\item[(ii)]
Let $\widetilde{X}$ be the blow-up of $X$ in the connected components of $\Fix G$ of codimension $\geq 2$ and $\widetilde{G}$ the automorphism group induced by $G$ on $\widetilde{X}$.

The quotient $\widetilde{X}/\widetilde{G}$ is smooth if and only if the local action of $G$ around all $x\in \Fix G$, is given by a matrix of the form:
$\diag (0,...,0,\xi_{p}^k,...,\xi_{p}^k)$.
\end{itemize}
\end{cor}
So, the local actions which are given by a diagonal matrix of the form $\diag (0,...,0,\xi_{p}^k,...,\xi_{p}^k)$ are of particular interest for our purpose of using Proposition \ref{BasicResolution}.
Keeping the same notation, we set:
\begin{defi}\label{typepoint}
A fixed point is simple if there is $i\in \left\{1,...,n\right\}$ such that $k_{1}=...= k_{i}=0$ and $k_{i+1}=...= k_{n}$.
\end{defi}
Let $X$ be a complex manifold and $G$ an automorphism group of prime order acting on $X$.
We have studied the case of a single blow-up, we now consider a sequence of blow-ups:
$$\xymatrix{
X_{k} \ar@(ul,ur)[]^{G_{k}}\ar[r]^{s_{k}} \ar[d]^{\pi_{k}}& \cdot\cdot\cdot \ar[r]^{s_{2}}& X_{1} \ar@(ul,ur)[]^{G_{1}}\ar[r]^{s_{1}}\ar[d]^{\pi_{1}}& X,\ar@(ul,ur)[]^{G}\ar[d]^{\pi}\\
 X_{k}/G_k \ar[r]^{r_{k}} &\cdot\cdot\cdot \ar[r]^{r_{2}}& X_{1}/G_1\ar[r]^{r_{1}}& X/G 
   }$$   
where each $s_{i+1}$ is the blow-up of $X_{i}$ in $\Fix G_{i}$ where $X=X_{0}$ and $G=G_{0}$.
However, in most cases, it is not possible to find $k$ such that $X_{k}/G_k $ is smooth.
\begin{prop}\label{blowup2}
There exists $k\in \mathbb{N}$ such that $X_{k}/G_k$ is smooth if and only if either all the fixed points of $G$ are simple or $p=3$. Moreover, in the case where $p=3$, $X_{2}/G_2$ is smooth.
\end{prop}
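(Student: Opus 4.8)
The plan is to reduce everything to the local computation already carried out in the proof of Proposition \ref{singu}. By Cartan's lemma we may assume $X=\mathbb{C}^{n}$ and $G=\langle\diag(\xi_{p}^{k_{1}},\dots,\xi_{p}^{k_{n}})\rangle$, and after splitting off the trivial factor we may assume all $k_{i}\neq 0$. As in that proof, blowing up the fixed point $0$ and restricting to the chart $\mathscr{O}_{i}$ replaces the exponent vector $(k_{1},\dots,k_{n})$ by
$$(k_{1}-k_{i},\dots,k_{i-1}-k_{i},\,k_{i},\,k_{i+1}-k_{i},\dots,k_{n}-k_{i})\pmod p.$$
The only invariant that matters is the set $S\subset(\Z/p\Z)^{*}$ of distinct nonzero values among the $k_{j}$: a point is of type $0$ or $1$ exactly when $|S|\le 1$, of type $2$ exactly when $p=3$ and $|S|=2$, and of other type exactly when $p\ge 5$ and $|S|\ge 2$. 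In the chart $\mathscr{O}_{i}$ with $k_{i}=v$ the nonzero exponents are $\{v\}\cup\{w-v:w\in S,\ w\neq v\}$.

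For the direction ``all points of type $0,1,2$ $\Rightarrow$ some $M_{k}$ smooth'', if there is no point of type $2$ then Proposition \ref{singu}(2) already gives that $M_{1}$ is smooth. If a point of type $2$ occurs, so $p=3$ and $S=\{1,2\}$, I would check on each chart that it becomes type $1$: the chart $k_{i}=v$ has nonzero exponents $\{v,\,(3-v)-v\}=\{v,\,3-2v\}$, which is $\{1\}$ for $v=1$ and $\{2\}$ for $v=2$, hence type $1$ in both charts; meanwhile points of type $1$ yield type-$0$ points and points of type $0$ are untouched. Thus after the first blow-up $\Fix G_{1}$ consists only of points of type $0$ or $1$, and Proposition \ref{singu}(2) applied to $(X_{1},G_{1})$ shows $M_{2}$ is smooth. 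This simultaneously proves the ``moreover'' statement.

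The hard part is the converse: I want to show that if some fixed point is of other type, which forces $p\ge 5$ and $|S|\ge 2$, then $M_{k}$ is singular for every $k$. The key lemma is that blowing up an other-type point always leaves an other-type fixed point in the exceptional fibre. To prove it I would use the description of the nonzero exponents above. If $|S|\ge 3$, pick any $v\in S$ and two further distinct values $w_{1}\neq w_{2}$ in $S$; then $w_{1}-v\neq w_{2}-v$, so the new nonzero exponents already contain two distinct values and the chart point is again of other type. If $|S|=2$, say $S=\{v,w\}$, the chart $k_{i}=v$ gives nonzero values $\{v,w-v\}$, which collapse to a single value (type $1$) only when $w=2v$; symmetrically the chart $k_{i}=w$ is type $1$ only when $v=2w$. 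If both collapsed we would obtain $w=2v$ and $v=2w$, hence $3v\equiv 0\pmod p$, which is impossible for $v\neq 0$ and $p\ge 5$. So at least one chart yields an other-type point.

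With the lemma in hand I would finish by induction along the tower: since each $s_{i+1}$ blows up all of $\Fix G_{i}$, an other-type point $x_{i}\in\Fix G_{i}$ produces an other-type point $x_{i+1}\in\Fix G_{i+1}$ in its exceptional fibre, and since an other-type point is in particular not of type $0$, Proposition \ref{singu}(1) shows $M_{i+1}$ is singular at $\pi_{i+1}(x_{i+1})$. Hence no $M_{k}$ is smooth. The genuine obstacle, and the arithmetic reason the statement holds, is exactly the dichotomy governing the congruence $3v\equiv 0\pmod p$: it is solvable with $v\neq 0$ precisely when $p=3$, so a type-$2$ singularity is eliminated after two blow-ups, whereas for $p\ge 5$ an other-type singularity reproduces itself indefinitely.
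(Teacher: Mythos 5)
Your proof is correct, and the forward direction (type $0,1$ gives $M_1$ smooth; type $2$ gives $M_2$ smooth via the chart computation $\{v,3-2v\}=\{v\}$ mod $3$) coincides with the paper's. For the converse, however, you take a genuinely different route. The paper argues backwards: it assumes $M_i$ is smooth for a minimal $i$, works in dimension $2$ with the sequence $u_i$ of exponent ratios, shows $u_{i-1}\subseteq\{0,1\}$ and then that any $\alpha'\in u_{i-2}\setminus\{0,1\}$ must satisfy both $\alpha'=2$ and $\alpha'=\tfrac12$, forcing $p=3$; the case $n>2$ is then reduced to $n=2$ by restricting to a pair of coordinates and observing $U_i\supseteq u_i'$. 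You instead prove a forward-propagation lemma: encoding a fixed point by the set $S$ of distinct nonzero exponents, you show that for $p\ge 5$ and $|S|\ge 2$ some chart of the blow-up again has $\ge 2$ distinct nonzero exponents (trivially when $|S|\ge3$; when $|S|=\{v,w\}$ because $w=2v$ and $v=2w$ would force $3v\equiv0\pmod p$), so an other-type point reproduces itself indefinitely and no $M_k$ can be smooth by Proposition \ref{singu}(1). Both arguments ultimately rest on the same arithmetic fact that $3v\equiv 0\pmod p$ has a nonzero solution only for $p=3$, but your version avoids the minimal-$i$ bookkeeping and the separate reduction from dimension $n$ to dimension $2$, since the invariant $|S|$ works uniformly in all dimensions; the paper's version makes the mechanism by which a type-$2$ point dies after two steps slightly more visible. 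One small point worth stating explicitly in your write-up: the chart origins you exhibit really are fixed points of $G_{i+1}$ (the fixed locus in the chart $\mathscr{O}_i$ is cut out by the vanishing of the coordinates with nonzero exponent, hence contains the origin), which is what lets the induction along the tower run.
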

\begin{proof}
By \cite[Lemma 1]{Cartan}, we can assume that $X=\mathbb{C}^{n}$ and $$G=\left\langle \diag (\xi_{p}^{k_{1}},...,\xi_{p}^{k_{n}})\right\rangle.$$

First, if $0$ is a simple fixed point, by Corollary \ref{singu}, $X_1/G_1$ is smooth.
If $p=3$, we will show that $X_2/G_2$ is smooth.
Let $G=\left\langle \diag (\xi_{3}^{k_{1}},...,\xi_{3}^{k_{n}})\right\rangle$ acting on $\mathbb{C}^{n}$. 
Without loss of generality, we can assume that all $k_{i}$ are different from 0.
Let $\widetilde{\mathbb{C}^{n}}$ be the blow-up of $\mathbb{C}^{n}$ in 0.
By Lemma \ref{ResolutionLemma},
in the chart $a_{i}\neq 0$ the action of $G_{1}$ is given by the diagonal matrix
$$\diag(\xi_{3}^{k_{1}-k_{i}},...,\xi_{3}^{k_{i-1}-k_{i}},\xi_{3}^{k_{i}},\xi_{3}^{k_{i+1}-k_{i}},...,\xi_{3}^{k_{n}-k_{i}}).$$
As $p=3$, there is a $j$ such that $k_{1}=...=k_{j}=1$ and $k_{j+1}=...=k_{n}=2$.
So, if $i\leq j$, by permuting the $i$-th and the $j$-th coordinates of the chart $a_{i}\neq 0$, we reduce the action to the form
$$\diag(1,...,1,\xi_{3},...,\xi_{3}).$$
If $i> j$, by a permutation of coordinates we obtain
$$\diag(\xi_{3}^{2},...,\xi_{3}^{2},1,...,1).$$
In both cases, these are simple fixed points. 
Hence, all the points of $\Fix G_{1}$ are simple fixed points. Moreover, as there are no fixed points with both eigenvalues $\xi_{3}$, $\xi_{3}^{2}$ present in the diagonal matrix of the action, we can conclude that the components of $\Fix G_{1}$ with spectra $(1,...,1,\xi_{3},...,\xi_{3})$ and $(\xi_{3}^{2},...,\xi_{3}^{2},1,...,1)$ are disjoint and can be blown up independently.
Hence, by Corollary \ref{singu}, (ii), $X_2/G_2$ is smooth.

Secondly, we will show that in the other cases, $X_{k}/G_k$ will never be smooth.
We start with $\dim X=2$, so that
by \cite[Lemma 1]{Cartan}, we can assume that $X=\mathbb{C}^{2}$ and $G=\left\langle \diag(\xi_{p},\xi_{p}^{\alpha})\right\rangle$,
$p>3$ and $\alpha$ not equal to $0$ or $1$.

Let $\xi_{p}$ be a fixed non-trivial $p$-th root of the unity, for $x\in \Fix G_{i}$, we can write:
$$(X_{i},G_{i},x)\sim (\mathbb{C}^{2},\left\langle \diag(\xi_{p},\xi_{p}^{\beta})\right\rangle,0),$$ 
where $\sim$ means that the local actions are the same.
Hence, we can define a sequence as follows:
$$u_{i}=\left\{\beta\in\Z/p\Z |\ \exists x\in X_{i}:\ (X_{i},G_{i},x)\sim (\mathbb{C}^{2},\left\langle \diag(\xi_{p},\xi_{p}^{\beta})\right\rangle,0)\right\}.$$

For instance, $u_{0}=\left\{\alpha,\frac{1}{\alpha}\right\}$, $u_{1}=\left\{\alpha-1,\frac{1}{\alpha-1},\frac{\alpha}{1-\alpha},\frac{1-\alpha}{\alpha}\right\}$,...
Now assume that there is $i\in\mathbb{N}$ such that $X_{i}/G_i$ is smooth.
Let $i$ be the smallest integer such that $X_{i}/G_i$ is smooth.
By Corollary \ref{singu}, (i), we have $u_{i}=\left\{0\right\}$ and we can write $u_{i-1}=\left\{\alpha_{1},...,\alpha_{k}\right\}$.
Let $x\in \Fix G_{i-1}$ such that $(X_{i-1},G_{i-1},x)\sim (\mathbb{C}^{2},\left\langle \diag(\xi_{p},\xi_{p}^{\alpha_{j}})\right\rangle,0)$, with $\alpha_{j}\in u_{i-1}\setminus \left\{0\right\}$.
Let $\widetilde{\mathbb{C}^{2}}$ be the blow-up of $\mathbb{C}^{2}$ in 0.
The action of $\left\langle \diag(\xi_{p},\xi_{p}^{\alpha_{j}})\right\rangle$ on $\widetilde{\mathbb{C}^{2}}$ has two fixed points $a_{1}$ and $a_{2}$ with (see Lemma \ref{ResolutionLemma}):
$$(\widetilde{\mathbb{C}^{2}},\left\langle \diag(\xi_{p},\xi_{p}^{\alpha_{j}})\right\rangle,a_{1})\sim (\mathbb{C}^{2},\diag(\xi_{p},\xi_{p}^{\alpha_{j}-1}),0)$$
and
$$(\widetilde{\mathbb{C}^{2}},\left\langle \diag(\xi_{p},\xi_{p}^{\alpha_{j}})\right\rangle,a_{2})\sim (\mathbb{C}^{2},\diag(\xi_{p}^{\alpha_{j}},\xi_{p}^{1-\alpha_{j}}),0).$$
Hence, $\alpha_{j}-1\in u_{i}$,
but $u_{i}=\left\{0\right\}$,
so, necessarily, $\alpha_{j}=1$ and so $u_{i-1}=\left\{1\right\}$.

We do the same calculation with $u_{i-2}=\left\{\alpha_{1}',...,\alpha_{k}'\right\}$.
Let $x\in \Fix G_{i-2}$ be such that:
$$(X_{i-2},G_{i-2},x)\sim \left(\mathbb{C}^{2},\left\langle \diag(\xi_{p},\xi_{p}^{\alpha_{j}'})\right\rangle,0\right).$$
with $\alpha_{j}'\in u_{i-2}\setminus \left\{0,1\right\}$. We remark that $u_{i-2}\setminus \left\{0,1\right\}$ is not empty because $X_{i-1}/G_{i-1}$ is not smooth by definition of $i$.
Let $\widetilde{\mathbb{C}^{2}}$ be the blow-up of $\mathbb{C}^{2}$ in 0.
The action of $\left\langle \diag(\xi_{p},\xi_{p}^{\alpha_{j'}})\right\rangle$ on $\widetilde{\mathbb{C}^{2}}$ has 2 fixed points $a_{1}'$ and $a_{2}'$ with (see Lemma \ref{ResolutionLemma}):
$$\left(\widetilde{\mathbb{C}^{2}},\left\langle \diag(\xi_{p},\xi_{p}^{\alpha_{j}'})\right\rangle,a_{1}\right)\sim \left(\mathbb{C}^{2},\diag(\xi_{p},\xi_{p}^{\alpha_{j}'-1}),0\right)$$
and
$$\left(\widetilde{\mathbb{C}^{2}},\left\langle \diag(\xi_{p},\xi_{p}^{\alpha_{j}'})\right\rangle,a_{1}\right)\sim \left(\mathbb{C}^{2},\diag(\xi_{p}^{\alpha_{j}'},\xi_{p}^{1-\alpha_{j}'}),0\right).$$
But $$\left(\mathbb{C}^{2},\diag(\xi_{p}^{\alpha_{j}'},\xi_{p}^{1-\alpha_{j}'}),0\right)\sim \left(\mathbb{C}^{2},\diag(\xi_{p},\xi_{p}^{\frac{1-\alpha_{j}'}{\alpha_{j}'}}),0\right).$$
Hence, necessarily, $\alpha_{j}'=2$ and $\alpha_{j}'=\frac{1}{2}$. In $\Z/p\Z$, $2=\frac{1}{2}$ if, and only if, $p=3$.
Hence, $p=3$ and we are done.
~\\

Now, we assume $n>2$.
By \cite[Lemma 1]{Cartan}, we can assume that $X=\mathbb{C}^{n}$ and $G=\left\langle \diag(\xi_{p}^{k_{1}},\xi_{p}^{k_{2}},...,\xi_{p}^{k_{n}})\right\rangle$. Without loss of generality, we can assume that all the $k_{i}$ are different from $0$.
Moreover, by assumption, $p>3$ and the $k_{i}$ are not all equal. Without loss of generality, we can assume that $k_{1}=1$. Since not all the $k_{i}$ are equal, there is $j\in \left\{1,...,n\right\}$ such that $k_{j}\neq 1$. We also denote $\alpha= k_{j}$. 
We denote $X'=\mathbb{C}^{2}$ and $G'=\left\langle \diag(\xi_{p},\xi_{p}^{\alpha})\right\rangle$. 
And we define as before the sequence:
$$u'_{i}=\left\{\beta\in\Z/p\Z |\ \exists x\in X'_{i}:\ (X'_{i},G'_{i},x)\sim (\mathbb{C}^{2},\left\langle \diag(\xi_{p},\xi_{p}^{\beta})\right\rangle,0)\right\}.$$
We also define the following sequence:
\begin{align*}
U_{i}=&\left\{\beta\in\Z/p\Z |\ \exists x\in X_{i}:\ \right.\\
&\left. (X_{i},G_{i},x)\sim (\mathbb{C}^{n},\left\langle \diag(\xi_{p},\xi_{p}^{\beta},\xi_{p}^{t_{3}},...,\xi_{p}^{t_{n}})\right\rangle,0)\right\}.
\end{align*}
We have to show that $U_{i}\neq\left\{0\right\}$ for all $i\in\mathbb{N}$.
Indeed, $U_{i}\supset u'_{i}$, and we have seen that $u'_{i}\neq\left\{0\right\}$ for all $i\in\mathbb{N}$. 
\end{proof}
\section{Proof of Theorem \ref{Maincor} and outcomes}\label{The main theorem}
Theorem \ref{Maincor} is a direct consequence of the following result which will be proven in this section.
\begin{thm}\label{main}
Let $X$ be a compact K\"ahler manifold of complex dimension $n$ and $G=\left\langle g\right\rangle$ an automorphism group of prime order $p\leq 19$. Let $c:=\codim \Fix G$.
Assume that
\begin{itemize}
\item[(i)]
$H^*(X,\Z)$ and  $H^*(\Fix G,\Z)$ are $p$-torsion-free,
\item[(ii)]
$h^*(\Fix G,\Q)\geq\sum_{1\leq q < p} \ell_{q}^*(X),$
\item[(iii)]
all fixed points of $G$ are simple,
\item[(iv)]
$c\geq\frac{n}{2}+1$.
\end{itemize}
Then, for all $2n-2c+1\leq k \leq2c-1$,
$(X,G)$ is $H^k$-normal.
\end{thm}
We first explain why Theorem \ref{Maincor} follows from Theorem \ref{main}.
Boissi\`ere, Nieper-Wisskirchen and Sarti in \cite[Corollary 4.3]{SmithTh} have proven the following formula:
\begin{prop}\label{BNSFormula}
Let $X$ be a compact connected orientable manifold of dimension $n$ and $G$ an automorphism group of prime order $p$.
If the spectral sequence of equivariant cohomology with coefficients in $\F$ degenerates at the second page, then:
$$h^*(\Fix G,\F)=\sum_{1\leq q < p} \ell_{q}^*(X).$$
\end{prop}
Hence, since we assume that $H^*(\Fix G,\Z)$ is $p$-torsion-free, we obtain Theorem \ref{Maincor} from Theorem \ref{main} and Proposition \ref{BNSFormula}.

Before proving Theorem \ref{main} in Section \ref{ProofMain}, we discuss its hypotheses in few words. 
\begin{rmk}
The condition $p\leq 19$ is necessary to work with the spectral sequence of equivariant cohomology with coefficients in $\Z$ and use Corollary \ref{equico} (see Lemma \ref{MainProofLemma3}). 
However, it is possible to avoid the condition $p\leq 19$ working, under some additional assumptions, with the spectral sequence of equivariant cohomology with coefficients in $\F$. 
I made the choice not to provide such a theorem, since in practice it is quite rare to have to work with $p>19$; 
for instance, until now, none symplectic automorphism of such order is known on irreducible symplectic manifolds.
\end{rmk}
\begin{rmk}\label{RemarkCodim}
The condition on the codimension of $\Fix G$ in Theorem \ref{main} could be improved. However, hypothesis (ii) would have to be replaced by a more 
elaborate condition. 
\end{rmk}
\begin{rmk}
As explained in Section \ref{blowup}, to avoid the condition that the fixed points of $G$ are simple, we need to find another practical way of resolving the singularities of $X/G$. We could also use two blow-ups in the case $p=3$. We provide such an example in Section \ref{SymplecticOrder3}. 
\end{rmk}
\subsection{Proof of Theorem \ref{main}}\label{ProofMain}
The idea of the proof is to calculate the discriminant of the exceptional lattice $N_{k,r}$, where $r$ is obtained from a blow-up, and apply Proposition  \ref{BasicResolution}.

All the fixed points of $G$ are simple, so we recall from Section \ref{blowup} that we are in the configuration of the following commutative diagram:
\begin{equation}
\xymatrix{
\widetilde{X} \ar[r]^{s} \ar[d]^{\widetilde{\pi}} & X\ar[d]^{\pi}\\
 \widetilde{X}/\widetilde{G} \ar[r]^{r}& X/G,    
   }
\label{cacacommutant0}  
\end{equation}
$\widetilde{G}$ be the automorphism group induced by $G$ and $\widetilde{X}/\widetilde{G}$ is smooth.
We also denote $M=X/G$, $\widetilde{M}=\widetilde{X}/\widetilde{G}$, $V=X\smallsetminus \Fix G$, $U=\pi(V)$, $F=s^{-1}(\Fix G)$, we use the same symbol $F$ for its image by $\widetilde{\pi}$ and we adopt the notation introduced in Section \ref{BoissiereSartiH2} and Section \ref{DefinitionIntro}.
Diagram (\ref{cacacommutant0}) induces a commutative diagram on cohomology:
$$\xymatrix{
H^{k}(M,\Z)\ar@/_/[r]_{\pi^{*}} \ar[d]_{r^{*}}& H^{k}(X,\Z)\ar@/_/[l]^{\pi_{*}}\ar[d]^{s^{*}}\\
H^{k}(\widetilde{M},\Z)\ar@/^/[r]^{\widetilde{\pi}^{*}}& \ar@/^/[l]_{\widetilde{\pi}_{*}} H^{k}(\widetilde{X},\Z).
}$$
\begin{lemme}\label{commut2}
Let $0\leq k \leq 2n$ and $x\in H^{k}(X,\Z)$. 
Then:
\begin{itemize}
\item[(i)]
$\widetilde{\pi}_{*}(s^{*}(x))=r^{*}(\pi_{*}(x))+(p-\tors),$
\item[(ii)]
$N_{k,r}=\widetilde{\pi}_{*}(s^{*}(H^{k}(X,\Z)\oplus H^{2n-k}(X,\Z)))^\bot_f$.
\end{itemize}
\end{lemme}
\begin{proof}
We remark that $(\widetilde{X},\widetilde{G},s)$ is a pull-back of $(X,G)$, so (i) follows from Lemma \ref{IlLeFautBien}.
Applying (i) to $H^{k}_f(\widetilde{M},\Z)$, we obtain:
\begin{equation}
\widetilde{\pi}_{*}(s^{*}(H^{k}(X,\Z))_f= r^{*}(\pi_*(H^{k}(X,\Z))_f).
\label{ExceptionalLattice1}
\end{equation}
Then, it follows (ii) by definition of $N_{k,r}$.
\end{proof}
\begin{lemme}\label{LemmeMainProof5}
\begin{itemize}
\item[(i)]
For all $i\leq 2c-2$,
$H^i(X,\Z)=H^i(V,\Z)$,
\item[(ii)]
$H^{2c-1}(V,\Z)$ is $p$-torsion-free.
\end{itemize}
\end{lemme}
\begin{proof}
We have the following exact sequence:
$$\xymatrix{ H^i(X,V,\Z)\ar[r] & H^{i}(X,\Z)\ar[r] & H^{i}(V,\Z)\ar[r]&H^{i+1}(X,V,\Z).}$$
Moreover,
by Thom's isomorphism for each connected component $S_m$ of $\Fix G$:
$H^i(X,X\smallsetminus S_m,\Z)=H^{i-2\codim S_m}(S_m,\Z).$
Hence:
$$H^i(X,V,\Z)=\sum_{S_m\subset \Fix G} H^i(X,X\smallsetminus S_m,\Z)=0,$$
for all $i\leq 2c-1$.
So, we get (i).
Moreover, we obtain the following exact sequence:
\begin{equation}
\xymatrix{ 0\ar[r] & H^{2c-1}(X,\Z)\ar[r] & H^{2c-1}(V,\Z)\ar[r]&H^{2c}(X,V,\Z).}
\label{EquationMainProof4}
\end{equation}
From the hypotheses, we know that $H^{2c-1}(X,\Z)$ and $H^*(\Fix G,\Z)$ are $p$-torsion-free;
so, by Thom's isomorphism, $H^{2c}(X,V,\Z)$ is $p$-torsion-free. 
Then, (ii) follows from (\ref{EquationMainProof4}).
\end{proof}
We consider the following commutative diagram:
\begin{equation}
\xymatrix@C=10pt{ \ar[d]^{d\widetilde{\pi}^{*}}H^{k}(\mathscr{N}_{\widetilde{M}/F},\mathscr{N}_{\widetilde{M}/F}-0,\Z)=H^{k}(\widetilde{M},U,\Z)\ar[r]^{\ \ \ \ \ \ \ \ \ \ \ \ \ \ \ \ \ \ \ \ g}&\ar[d]_{\widetilde{\pi}^{*}}H^{k}(\widetilde{M},\Z)\\
H^{k}(\mathscr{N}_{\widetilde{X}/F},\mathscr{N}_{\widetilde{X}/F}-0,\Z)=H^{k}(\widetilde{X},V,\Z)\ar[r]^{\ \ \ \ \ \ \ \ \ \ \ \ \ \ \ \ \ \ \ \ h}&H^{k}(\widetilde{X},\Z),
}
\label{Thom}
\end{equation}
where $\mathscr{N}_{\widetilde{X}/F}-0$ and $\mathscr{N}_{\widetilde{M}/F}-0$ are vector bundles minus the zero section.
We denote $T_k:=h(H^{k}(\widetilde{X},V,\Z))$.
\begin{lemme}\label{*}
When $2n-2c+1\leq k \leq2c-1$,
we have: 
$$H^k(\widetilde{X},\Z)=s^*(H^k(X,\Z))\oplus T_k.$$
Moreover:
$$T_k\bot\ s^*(H^{2n-k}(X,\Z)).$$
\end{lemme}
\begin{proof}
By Thom's isomorphism, $H^{k}(\widetilde{X},V,\Z)= H^{k-2}(F,\Z)$, and the map $h$ can be identified with the morphism 
$j_{*}:H^{k-2}(F,\Z)\rightarrow H^{k}(\widetilde{X},\Z)$, where $j$ is the inclusion in $\widetilde{X}$.
As in the proof of \cite[Theorem 7.31]{Voisin}, the map
$$(s^{*},j_{*}): H^{k}(X,\Z)\oplus H^{k-2}(F,\Z)\rightarrow H^{k}(\widetilde{X},\Z)$$ 
is surjective and its kernel is the image of the map
$$\bigoplus_{S_{d}\subset \Fix G} H^{k-2r_{d}}(S_{d},\Z)\rightarrow H^{k}(X,\Z)\oplus H^{k-2}(F,\Z),$$
where $r_{k}$ is the codimension of the component $S_{d}$ of $\Fix G$.
But in our case $\bigoplus_{S_{d}\subset \Fix G} H^{k-2r_{d}}(S_{d},\Z)=0$.

The orthogonality of this sum follows from the projection formula.
\end{proof}
Moreover, Theorem \ref{Vois} also shows that:
$$\rk T_k= \rk \bigoplus_{S_{d}\subset \Fix G}\bigoplus_{i=0}^{r_{d}-2} H^{k-2i-2}(S_{d},\Z),$$
where $r_{d}$ is the codimension of the component $S_{d}$ of $\Fix G$.
Hence, when $2n-2c+1\leq k \leq2c-1$, 
we obtain:
\begin{equation}
\rk T_k=h^{2*+\epsilon}(\Fix G,\Z),
\label{EquationMainProof5}
\end{equation}
where $\epsilon=1$ if $k$ is odd and $\epsilon=0$ if $k$ is even.
\begin{lemme}\label{MainProofLemma}
We have:
\begin{itemize}
\item[(i)]
When $2n-2c+1\leq k \leq2c-1$:
$$\discr \widetilde{\pi}_*(T_k\oplus T_{2n-k})=p^{2h^{2*+\epsilon}(\Fix G,\Z)},$$
where $\epsilon=1$ if $k$ is odd and $\epsilon=0$ if $k$ is even.
\item[(ii)]
$N_{k,r}$ is the primitive over-lattice of $\widetilde{\pi}_*(T_k\oplus T_{2n-k})$.
\end{itemize}
\end{lemme}
\begin{proof}
By Lemma \ref{*}, we have:
\begin{equation}
H^{k}(\widetilde{X},\Z)\oplus H^{2n-k}(\widetilde{X},\Z)=s^{*}\left(H^{k}(X,\Z)\oplus H^{2n-k}(X,\Z)\right)\oplus^{\bot} \left(T_k\oplus T_{2n-k}\right).
\label{EquationMainProof8}
\end{equation}
Since $\widetilde{X}$ and $X$ are smooth, $H^{k}(\widetilde{X},\Z)\oplus H^{2n-k}(\widetilde{X},\Z)$ and $H^{k}(X,\Z)\oplus H^{2n-k}(X,\Z)$ are unimodular. 
It follows that $T_k\oplus T_{2n-k}$ is unimodular.
Moreover, by Thom's isomorphism $H^{k}(\widetilde{X},V,\Z)= H^{k-2}(F,\Z)$, and the map $h$ can be identified with the morphism 
$j_{*}:H^{k-2}(F,\Z)\rightarrow H^{k}(\widetilde{X},\Z)$, where $j$ is the inclusion in $\widetilde{X}$.
It follows that $T_k\subset H^k(\widetilde{X},\Z)^{\widetilde{G}}$.
Hence, by Lemma \ref{commut} (ii), we obtain:
$$\discr \widetilde{\pi}_*(T_k\oplus T_{2n-k})=p^{\rk T_k\oplus T_{2n-k}}.$$
From (\ref{EquationMainProof5}), when $2n-2c+1\leq k \leq2c-1$,  
$$\rk T_k=h^{2*+\epsilon}(\Fix G,\Z),$$
where $\epsilon=1$ if $k$ is odd and $\epsilon=0$ if $k$ is even.
Remark that if $2n-2c+1\leq k \leq2c-1$ then $2n-2c+1\leq 2n-k \leq2c-1$.
Hence, $\rk T_k=\rk T_{2n-k}=h^{2*+\epsilon}(\Fix G,\Z)$.
So, it follows (i).

Now taking the image of (\ref{EquationMainProof8}) by $\widetilde{\pi}_*$, we obtain
$\widetilde{\pi}_*(s^{*}\left(H^{k}(X,\Z)\oplus H^{2n-k}(X,\Z)\right))\oplus^\bot \widetilde{\pi}_*(\left(T_k\oplus T_{2n-k}\right))$ as a sublattice of full rank of $H^{k}(\widetilde{M},\Z)\oplus H^{2n-k}(\widetilde{M},\Z)$.
Moreover, applying Proposition \ref{SmithProp}, the direct sum remains orthogonal. 
So, (ii) follows from Lemma \ref{commut2} (ii). 
\end{proof}

The Thom class of $\mathscr{N}_{\widetilde{M}/F}$ is sent by $d\widetilde{\pi}^{*}$ to $p$ times the Thom class of $\mathscr{N}_{\widetilde{X}/F}$. This provides:
: $$d\widetilde{\pi}^{*}(H^{k}(\mathscr{N}_{\widetilde{M}/F},\mathscr{N}_{\widetilde{M}/F}-0,\Z))=pH^{k}(\mathscr{N}_{\widetilde{X}/F},\mathscr{N}_{\widetilde{X}/F}-0,\Z).$$
Then, by commutativity of diagram (\ref{Thom}) and Proposition \ref{SmithProp}, we have
$g(H^{k}(\widetilde{M},U,\Z))=\widetilde{\pi}_{*}(T_k)$.
It follows the exact sequence:
\begin{equation}
\xymatrix{ 0\ar[r] &\widetilde{\pi}_{*}(T_k) \ar[r] & H^{k}(\widetilde{M},\Z)\ar[r] & H^{k}(U,\Z).}
\label{ExactSequence1}
\end{equation}
In order to calculate the discriminant of $N_{k,r}$ which is by Lemma \ref{MainProofLemma} (ii), the primitive over lattice of $\widetilde{\pi}_{*}(T_k)\oplus \widetilde{\pi}_{*}(T_{2n-k})$,
we need to know the $p$-torsion of $H^{k}(U,\Z)\oplus H^{2n-k}(U,\Z)$.
Let $H^k_p(U,\Z)$ be the $\F$-vector spaces given by the $p$-torsion part of $H^k(U,\Z)$.

To lighten a bit the notation, in the following calculations, we denote $\ell_*^{k}$ for $\ell_*^{k}(X)$ and $N_k$ for $N_{k,r}$.
\begin{lemme}\label{MainProofLemma3}
Let $k \leq2c-1$,
we have
if $k=2a$:
$$\dim_{\F} H^k_p(U,\Z)\leq \sum_{j=0}^{a-1} \ell_+^{2j}+\sum_{j=0}^{a-1} \ell_-^{2j+1}.$$
If $k=2a+1$:
$$\dim_{\F} H^k_p(U,\Z)\leq \sum_{j=0}^{a-1} \ell_+^{2j+1}+\sum_{j=0}^{a} \ell_-^{2j}.$$
\end{lemme}
\begin{proof}
Since $G$ acts freely on $V$, we recall from Section \ref{EquivariantCohomology} that
$$H^{*}(U,\Z)\simeq H_{G}^{*}(V,\Z).$$

Using the second page of the spectral sequence of equivariant cohomology, it follows that
\begin{equation}
\dim_{\F} H^k_p(U,\Z)\leq \sum_{i=1}^k \dim_{\F}H^i(G,H^{k-i}(V,\Z)).
\label{EquationMainProof9}
\end{equation}
The group $H^0(G,H^{k}(V,\Z)$ does not provide any contribution to $H^k_p(U,\Z)$ because 
by Corollary \ref{equico}, $H^0(G,H^{k}(V,\Z)=H^{k}(V,\Z)^G$ and 
by Lemma \ref{LemmeMainProof5}, $H^{k}(V,\Z)$ is $p$-torsion-free.

Moreover, by Lemma \ref{LemmeMainProof5} (i), (\ref{EquationMainProof9}) becomes:
$$\dim_{\F} H^k_p(U,\Z)\leq \sum_{i=1}^k \dim_{\F}H^i(G,H^{k-i}(X,\Z)).$$
Hence, by Corollary \ref{equico},
if $k=2a$:
$$\dim_{\F} H^k_p(U,\Z)\leq \sum_{j=0}^{a-1} \ell_+^{2j}+\sum_{j=0}^{a-1} \ell_-^{2j+1}.$$
If $k=2a+1$:
$$\dim_{\F} H^k_p(U,\Z)\leq \sum_{j=0}^{a-1} \ell_+^{2j+1}+\sum_{j=0}^{a} \ell_-^{2j}.$$
\end{proof}
From (\ref{ExactSequence1}) and Lemma \ref{MainProofLemma} (ii):
$$\dim_{\F}\left(\frac{N_k}{\widetilde{\pi}_*(T_k\oplus T_{2n-k})}\right)\leq \dim_{\F} H^k_p(U,\Z) \oplus H^{2n-k}_p(U,\Z).$$
When $2n-2c+1\leq k \leq2c-1$, we can apply Lemma \ref{MainProofLemma3} to $H^k_p(U,\Z)$ and to $H^{2n-k}_p(U,\Z)$. When $k=2a$,
$$\dim_{\F}\left(\frac{N_k}{\widetilde{\pi}_*(T_k\oplus T_{2n-k})}\right)\leq \sum_{j=0}^{a-1} \ell_+^{2j}+\sum_{j=0}^{a-1} \ell_-^{2j+1}+\sum_{j=0}^{n-a-1} \ell_+^{2j}+\sum_{j=0}^{n-a-1} \ell_-^{2j+1}.$$
Hence, by Lemma \ref{MainProofLemma} (i) and (\ref{BasicLatticeTheory}):
\begin{equation}
\log_p\discr N_k\geq 2h^{2*}(\Fix G,\Z)-2\left[\sum_{j=0}^{a-1} \ell_+^{2j}+\sum_{j=0}^{a-1} \ell_-^{2j+1}+\sum_{j=0}^{n-a-1} \ell_+^{2j}+\sum_{j=0}^{n-a-1} \ell_-^{2j+1}\right].
\label{EquationMainProof10}
\end{equation}
Similarly, when $k=2a+1$, we obtain:
\begin{equation}
\log_p\discr N_k\geq 2h^{2*+1}(\Fix G,\Z)-2\left[\sum_{j=0}^{a-1} \ell_+^{2j+1}+\sum_{j=0}^{a} \ell_-^{2j}+\sum_{j=0}^{n-a-2} \ell_+^{2j+1}+\sum_{j=0}^{n-a-1} \ell_-^{2j}\right].
\label{EquationMainProof11}
\end{equation}
Now the strategy to finish the proof is the following. Equations (\ref{EquationMainProof10}) and (\ref{EquationMainProof11}) alone are not practical but, as we will see, their sum can be simplified. Hence, the objective is to use (\ref{EquationMainProof10})+(\ref{EquationMainProof11}) to prove our theorem. Notice first that since $c\geq\frac{n}{2}+1$, the interval $\left\{2n-2c+1,...,2c-1\right\}$ always has  cardinal at least 3. Hence, we can always find $k$ such that $k, k+1\in\left\{2n-2c+1,...,2c-1\right\}$. 
Using (\ref{EquationMainProof10})+(\ref{EquationMainProof11}) we will provide an upper bound for $\log_p\discr N_k+\log_p\discr N_{k+1}$ which will provide the $H^k\oplus H^{k+1}$-normality. Then, the claim follows covering $\left\{2n-2c+1,...,2c-1\right\}$ by pairs of consecutive numbers $\left\{k,k+1\right\}$.

So, let $k,k+1\in \left\{2n-2c+1,...,2c-1\right\}$, we can assume that $k=2a$, if $k$ is odd the proof is identical.
Hence, summing (\ref{EquationMainProof10})+(\ref{EquationMainProof11}) and applying Proposition \ref{BasicAlphaInequality2} (ii), we have:
\begin{align*}
\log_p\discr N_{k}+\log_p\discr N_{k+1}-2h^{*}(\Fix G,\Z)&\geq-2\left[\sum_{j=0}^{k-1} \ell_+^{j}+\sum_{j=0}^{k} \ell_-^{j}+\sum_{j=0}^{2n-k-2} \ell_+^{j}+\sum_{j=0}^{2n-k-1} \ell_-^{j}\right]\\
&\geq-2\left[\sum_{j=0}^{k-1} \ell_+^{j}+\sum_{j=0}^{k} \ell_-^{j}+\sum_{j=0}^{2n-k-2} \ell_+^{2n-j}+\sum_{j=0}^{2n-k-1} \ell_-^{2n-j}\right]\\
&\geq-2\left[\sum_{j=0}^{k-1} \ell_+^{j}+\sum_{j=0}^{k} \ell_-^{j}+\sum_{j=k+2}^{2n} \ell_+^{j}+\sum_{j=k+1}^{2n} \ell_-^{2n-j}\right]\\
&\geq-2\left[\sum_{1\leq q < p} \ell_{q}^*-\ell_+^k-\ell_+^{k+1}\right].
\end{align*}
Hence, by hypothesis,
\begin{equation}
\log_p\discr N_{k}+\log_p\discr N_{k+1}\geq2\left[\ell_+^k+\ell_+^{k+1}\right],
\label{numerodefin}
\end{equation}
and, by Proposition \ref{BasicResolution},
$$\alpha_k=\alpha_{2n-k}=\alpha_{k+1}=\alpha_{2n-k-1}=0.$$
\begin{rmk}\label{remarkdefin}
We have proved a bit more than what is claimed in Theorem \ref{main}. Under its hypotheses, we have seen that $\widetilde{\pi}_{*}(s^{*}(H^{k}(X,\Z)\oplus H^{2n-k}(X,\Z)))$ is primitive, in $H^*(\widetilde{M},\Z)$, for all $2n-2c+1\leq k \leq2c-1$.

Indeed, from (\ref{numerodefin}) and Proposition \ref{BasicResolution}, $ \discr N_{k}=p^{\ell_+^k}$. 
Moreover, from (\ref{equationdefin2}) and Corollary \ref{MainBasicCor}, $\discr r^{*}(\pi_{*}(H^{k}_f(X,\Z)\oplus H^{2n-k}_f(X,\Z)))=p^{\ell_+^k}$.
So, by (\ref{ExceptionalLattice1}), we have:
\begin{equation} 
\log_p\discr N_{k}=\discr\widetilde{\pi}_{*}(s^{*}(H^{k}_f(X,\Z)\oplus H^{2n-k}_f(X,\Z))). 
\label{RemarkPrimitivity}
\end{equation}
Since $\widetilde{M}$ is smooth, by (\ref{BasicLatticeTheory2}), we have $\discr N_{k}=\discr N_{k}^\bot$.
However, by Lemma \ref{commut2} (ii), $\widetilde{\pi}_{*}(s^{*}(H^{k}(X,\Z)\oplus H^{2n-k}(X,\Z)))\subset N_{k}^\bot$. So by (\ref{RemarkPrimitivity}), 
$\widetilde{\pi}_{*}(s^{*}(H^{k}(X,\Z)\oplus H^{2n-k}(X,\Z)))=N_{k}^\bot$, which is primitive in $H^*(\widetilde{M},\Z)$.
\end{rmk}
\subsection{Refinement on the codimension of the fixed locus}
We provide an extension of Theorem \ref{Maincor} when $\Fix G$ is a bit larger. As mentioned in Remark \ref{RemarkCodim}, we will need more technical conditions. However, when $\codim \Fix G=\left\lceil \frac{n}{2}\right\rceil$, it is still possible to provide a result which is interesting in practice (see for instance the application in \cite[Section 15]{Kapfer}). We will see another example in Section \ref{IntegralBasisHilbertScheme}.
\begin{thm}\label{Main2}
Let $X$ be a compact K\"ahler manifold of complex dimension $n$ and $G=\left\langle g\right\rangle$ be an automorphism group of prime order $p\leq 19$. 
Assume that:
\begin{itemize}
\item[(i)]
$H^*(X,\Z)$ and  $H^*(\Fix G,\Z)$ are $p$-torsion-free.
\item[(ii)]
$h^{2*}(\Fix G,\Q)\geq \ell_{+}^{2*}(X)+\ell_-^{2*+1}(X)$ if $n$ is even and
$h^{2*+1}(\Fix G,\Q)\geq \ell_{+}^{2*+1}(X)+\ell_-^{2*}(X)$ if $n$ is odd.
\item[(iii)]
All fixed points of $G$ are simple.
\item[(iv)]
$\codim \Fix G=\left\lceil \frac{n}{2}\right\rceil$.
\item[(v)]
When $n$ is even, let $\Sigma$ be the component of $\Fix G$ of dimension $\frac{n}{2}$. We assume that $\Sigma$ is connected.
Let $\left[\Sigma\right]$ be the cohomology class associated to $\Sigma$.
We assume that $\left[\Sigma\right]$ is not of torsion (in particular non 0) and is primitive in $H^{n}(X,\Z)$.
\end{itemize}
Then,
$(X,G)$ is $H^n$-normal.
\end{thm}
This section is dedicated to the proof of this theorem.
The proof is different when $n$ is even or odd. When $n$ is odd, it is a straightforward consequence of the results in the previous section. When $n$ is even we have to do more work.
Let us first prove the theorem when $n$ is odd.
\subsubsection*{Case: $n$ is odd}
We have $n=2m+1$ and $\codim \Fix G=m+1$, so $n\leq 2m+1$.
It follows that Lemmas \ref{commut2}, \ref{LemmeMainProof5}, \ref{*}, \ref{MainProofLemma} remain true. The problem is that the interval $\left\{2n-2c+1,...,2c-1\right\}$ has only cardinality one; hence it is no longer possible to sum (\ref{EquationMainProof10}) and (\ref{EquationMainProof11}).
However, (\ref{EquationMainProof11}) remains true and becomes
$$\log_p\discr N_n\geq 2h^{2*+1}(\Fix G,\Z)-2\left[\sum_{j=0}^{m-1} \ell_+^{2j+1}+\sum_{j=0}^{m} \ell_-^{2j}+\sum_{j=0}^{m-1} \ell_+^{2j+1}+\sum_{j=0}^{m} \ell_-^{2j}\right].$$
Applying Proposition \ref{BasicAlphaInequality2}, it follows:
$$\log_p\discr N_n\geq 2h^{2*+1}(\Fix G,\Z)-2\left[\sum_{j=0}^{m-1} \ell_+^{2j+1}+\sum_{j=0}^{m} \ell_-^{2j}+\sum_{j=0}^{m-1} \ell_+^{2n-2j-1}+\sum_{j=0}^{m} \ell_-^{2n-2j}\right].$$
So:
$$\log_p\discr N_n\geq 2h^{2*+1}(\Fix G,\Z)-2\left[\ell_{+}^{2*+1}+\ell_-^{2*}-\ell_{+}^{2m+1}\right].$$
Then, we get the result using hypothesis (ii) and Proposition \ref{BasicResolution}.
\subsubsection*{Case: $n$ is even}
In this case $n=2m$ and $2m>2c-1$.
We have to provide a variation of each lemma of Section \ref{ProofMain} which are not true anymore. 
Let us start with Lemma \ref{LemmeMainProof5}.
\begin{lemme}\label{LemmaOtherProof1}
\begin{itemize}
\item[(i)]
$H^{2m-1}(X,\Z)=H^{2m-1}(V,\Z)$,
\item[(ii)]
$H^{2m}(V,\Z)$ is $p$-torsion-free.
\end{itemize}
\end{lemme}
\begin{proof}
We have the following exact sequence:
$$\xymatrix{ H^{2m-1}(X,V,\Z)\ar[r] & H^{2m-1}(X,\Z)\ar[r] & H^{2m-1}(V,\Z)\ar[r]&H^{2m}(X,V,\Z)\ar[r]^{\rho}&H^{2m}(X,\Z).}$$
By Thom's isomorphism 
$$H^{2m-1}(X,V,\Z)=0\ \text{and}\ T: H^{2m}(X,V,\Z)\simeq H^0(\Sigma,\Z).$$
In \cite[Section 11.1.2]{Voisin}, it is explained that the cohomology class associated to $\Sigma$ 
can be expressed as $\left[\Sigma\right]=\rho(T^{-1}(1))$.
By assumption $\left[\Sigma\right]$ is not of torsion,
it follows that 
$\rho$ is injective.
So, $$H^{2m-1}(X,\Z)=H^{2m-1}(V,\Z).$$
Moreover, we obtain the following exact sequence:
$$
\xymatrix{ 0\ar[r]&H^{0}(\Sigma,\Z)\ar[r]^{\rho\circ T^{-1}} & H^{2m}(X,\Z)\ar[r] & H^{2m}(V,\Z)\ar[r]&H^{2m+1}(X,V,\Z).}
$$
From the hypotheses, we know that $H^{2m}(X,\Z)$ and $H^*(\Fix G,\Z)$ are $p$-torsion-free;
so by Thom's isomorphism $H^{2m+1}(X,V,\Z)$ is $p$-torsion-free. 
Moreover, we assumed that $\left[\Sigma\right]=\rho(T^{-1}(1))$ is primitive.
Hence, it follows (ii).
\end{proof}
We consider the following commutative diagram:
\begin{equation}
\xymatrix@C=10pt{ \ar[d]^{d\widetilde{\pi}^{*}}H^{2m}(\mathscr{N}_{\widetilde{M}/F},\mathscr{N}_{\widetilde{M}/F}-0,\Z)=H^{2m}(\widetilde{M},U,\Z)\ar[r]^{\ \ \ \ \ \ \ \ \ \ \ \ \ \ \ \ \ \ \ \ g}&\ar[d]_{\widetilde{\pi}^{*}}H^{2m}(\widetilde{M},\Z)\\
H^{2m}(\mathscr{N}_{\widetilde{X}/F},\mathscr{N}_{\widetilde{X}/F}-0,\Z)=H^{2m}(\widetilde{X},V,\Z)\ar[r]^{\ \ \ \ \ \ \ \ \ \ \ \ \ \ \ \ \ \ \ \ h}&H^{2m}(\widetilde{X},\Z),
}
\label{ThomII}
\end{equation}
where $\mathscr{N}_{\widetilde{X}/F}-0$ and $\mathscr{N}_{\widetilde{M}/F}-0$ are vector bundles minus the zero section.
We denote $R:=h(H^{2m}(\widetilde{X},V,\Z))$.
We prove the following variation of Lemma \ref{*}:
\begin{lemme}\label{LemmaOtherProof4}
We can write:
$$H^{2m}(\widetilde{X},\Z)=s^{*}(H^{2m}(X,\Z))\oplus^{\bot} T,$$
with $R=T\oplus \Z[\Sigma]$. 
\end{lemme}
\begin{proof}
As in proof of Lemma \ref{*}, the map $h$ can be identified with the morphism $j_{*}:H^{2m-2}(F,\Z)\rightarrow H^{2m}(\widetilde{X},\Z)$; moreover, the map
$$(s^{*},j_{*}): H^{2m}(X,\Z)\oplus H^{2m-2}(F,\Z)\rightarrow H^{2m}(\widetilde{X},\Z)$$ 
is surjective and its kernel is the image of the map
$$\bigoplus_{S_{d}\subset \Fix G} H^{2m-2r_{d}}(S_{d},\Z)\rightarrow H^{2m}(X,\Z)\oplus H^{2m-2}(F,\Z),$$
where $r_{d}$ is the codimension of the component $S_{d}$ of $\Fix G$.
However, in our case $$\bigoplus_{S_{d}\subset \Fix G} H^{2m-2r_{d}}(S_{d},\Z)=H^{0}(\Sigma,\Z).$$
So, we obtain
$H^{2m}(\widetilde{X},\Z)=s^{*}(H^{2m}(X,\Z))\oplus T,$ and the 
orthogonality of the sum can also be deduced using the projection formula.
\end{proof}
Let $\widetilde{R}$ be the minimal primitive over-lattice of $\widetilde{\pi}_{*}(R)$ in $H^{2m}(\widetilde{M},\Z)$ and $\widetilde{T}$ the minimal primitive over-lattice of $\widetilde{\pi}_{*}(T)$ in $H^{2m}(\widetilde{M},\Z)$.
From Lemmas \ref{LemmaOtherProof4} and \ref{commut2} (ii), by the same proof as Lemma \ref{MainProofLemma} (ii):
\begin{equation}
N_{2m}=\widetilde{T}^{2\oplus}.
\label{NTMainProof2}
\end{equation}
As in the previous proof, we need to calculate the discriminant of $\widetilde{T}$. 
By the property of the Thom isomorphism, $$d\widetilde{\pi}^{*}(H^{2m}(\mathscr{N}_{\widetilde{M}/F},\mathscr{N}_{\widetilde{M}/F}-0,\Z))=pH^{2m}(\mathscr{N}_{\widetilde{X}/F},\mathscr{N}_{\widetilde{X}/F}-0,\Z).$$
Then, by commutativity of diagram (\ref{Thom}) and Proposition \ref{SmithProp}, we have
$g(H^{2m}(\widetilde{M},U,\Z))=\widetilde{\pi}_{*}(R)$.
It follows the exact sequence:
\begin{equation}
\xymatrix{ 0\ar[r] &\widetilde{\pi}_{*}(R) \ar[r] & H^{2m}(\widetilde{M},\Z)\ar[r] & H^{2m}(U,\Z).}
\label{ExactSequence2}
\end{equation}
With the same proof as Lemma \ref{MainProofLemma3}, we obtain from Lemma \ref{LemmaOtherProof1}:
\begin{equation}
\dim_{\F} H^{2m}_p(U,\Z)\leq \sum_{j=0}^{m-1} \ell_+^{2j}+\sum_{j=0}^{m-1} \ell_-^{2j+1}.
\label{EquationOtherProof3}
\end{equation}
However, (\ref{ExactSequence2}) will provide only an upper bound for $\dim_{\F}\frac{\widetilde{R}}{\widetilde{\pi}_{*}(R)}$. We need some lemmas to compare $\frac{\widetilde{R}}{\widetilde{\pi}_{*}(R)}$ with $\frac{\widetilde{T}}{\widetilde{\pi}_{*}(T)}$.
\begin{lemme}\label{R}
There exists $x\in \widetilde{\pi}_{*}(T)$ such that $\frac{x+(-1)^{n-1}\widetilde{\pi}_{*}(s^{*}(\Sigma))}{p}\in H^{2m}(\widetilde{M},\Z)$.
\end{lemme}
\begin{proof} 
Let $s_{1}:Y\rightarrow X$ be the blow-up of $X$ in $\Sigma$ and $\Sigma_{1}$ the exceptional divisor, and $s_{2}:\widetilde{X}\rightarrow Y$ the blow-up in the other components of $\Fix G$ such that $s=s_{2}\circ s_{1}$. We denote $\Sigma_{2}=s_{2}^{*}(\Sigma_{1})$ and $\widetilde{\Sigma}=\widetilde{\pi}_*(\Sigma_{2})$.
Consider the following diagram:
$$\xymatrix{
\Sigma_{2}\ar[d]^{g_{2}}\ar@{^{(}->}[r]^{l_{2}}& \widetilde{X} \ar[d]^{s_{2}}\\
 \Sigma_{1}\ar[d]^{g_{1}}\ar@{^{(}->}[r]^{l_{1}}& Y \ar[d]^{s_{1}}\\
   \Sigma \ar@{^{(}->}[r]^{l_{0}}  & X,
   }$$
where $l_{0}$, $l_{1}$ and $l_{2}$ are the inclusions and $g_{i}:=s_{i|\Sigma_{i}}$, $i\in \left\{1,2\right\}$.

We have $\widetilde{\pi}_{*}(\mathcal{O}_{\widetilde{X}})=\mathcal{O}_{\widetilde{M}}\oplus\mathcal{L}$, with $\mathcal{L}^{p}=\mathcal{O}_{\widetilde{M}}\left(-\left(\sum_{S_{k}\subset \Fix G\setminus \Sigma} \widetilde{S_{k}}\right)-\widetilde{\Sigma}\right)$,
where each $\widetilde{S_{k}}$ is the exceptional divisor associated to the connected component $S_{k}\neq \Sigma$ of $\Fix G$.
Thus, $$\frac{\sum_{S_{k}\subset \Fix G\setminus \Sigma} \widetilde{S_{k}}+\widetilde{\Sigma}}{p}\in H^{2}(\widetilde{M},\Z).$$
It follows that $$\left(\frac{\sum_{S_{k}\subset \Fix G\setminus \Sigma} \widetilde{S_{k}}+\widetilde{\Sigma}}{p}\right)^{m}\in H^{2m}(\widetilde{M},\Z).$$
By Lemma \ref{commut} (i), we get:
\begin{equation}
\frac{x+\widetilde{\pi}_{*}(\Sigma_{2}^{m})}{
p}\in H^{2m}(\widetilde{M},\Z),
\label{l'eq}
\end{equation}
with $x\in \widetilde{\pi}_{*}(T)$.

Now, it remains to calculate $\Sigma_{2}^{m}$.
By \cite[Proposition 6.7]{Fulton}, we have
$$s_{1}^{*}l_{0*}(\Sigma)=l_{1*}(c_{m-1}(E)),$$
where $E:=g_{1}^{*}(\mathscr{N}_{\Sigma/X})/\mathscr{N}_{\Sigma_{1}/Y}$.
So
\begin{align*}
s_{1}^{*}l_{0*}(\Sigma)=&l_{1*}\left(\sum_{i=0}^{m-1} (-1)^{m-1-i}c_{i}(g_{1}^{*}(\mathscr{N}_{\Sigma/X}))\cdot c_{1}(\mathscr{N}_{\Sigma_{1}/Y})^{m-1-i}\right)\\
=&l_{1*}\left(\sum_{i=1}^{m-1} (-1)^{m-1-i}c_{i}(g_{1}^{*}(\mathscr{N}_{\Sigma/X}))\cdot c_{1}(\mathscr{N}_{\Sigma_{1}/Y})^{m-1-i}\right)\\
&+(-1)^{m-1}l_{1*}\left(c_{1}(\mathscr{N}_{\Sigma_{1}/Y})^{m-1}\right)\\
=&l_{1*}\left(\sum_{i=1}^{m-1} (-1)^{m-1-i}c_{i}(g_{1}^{*}(\mathscr{N}_{\Sigma/X}))\cdot c_{1}(\mathscr{N}_{\Sigma_{1}/Y})^{m-1-i}\right)\\
&+(-1)^{m-1}\Sigma_{1}^{m}.
\end{align*}
By applying $s_{2}^{*}$, we get:
$$\Sigma_{2}^{m}=(-1)^{m-1}\left(s^{*}(\Sigma)-s_{2}^{*}l_{1*}(a)\right),$$
where $a= \sum_{i=1}^{m-1} (-1)^{m-1-i}c_{i}(g_{1}^{*}(\mathscr{N}_{\Sigma/X}))\cdot c_{1}(\mathscr{N}_{\Sigma_{1}/Y})^{m-1-i}\in T$.
And pushing forward via $\widetilde{\pi}_{*}$, we get:
$$\widetilde{\pi}_{*}(\Sigma_{2}^{m})=(-1)^{m-1}\left(\widetilde{\pi}_{*}(s^{*}(\Sigma))-\widetilde{\pi}_{*}\left(s_{2}^{*}l_{1*}(a)\right)\right).$$
The result follows from (\ref{l'eq}). 
\end{proof}
\begin{lemme}\label{LemmaOtherProof6}
We have:
$$\dim_{\F} \frac{\widetilde{T}}{\widetilde{\pi}_{*}(T)}\leq \dim_{\F}H_p^{2m}(U,\Z)-1,$$
where $H_p^{2m}(U,\Z)$ is the $p$-torsion part of $H^{2m}(U,\Z)$.
\end{lemme}
\begin{proof}
From (\ref{ExactSequence2}):
$$\dim_{\F} \frac{\widetilde{R}}{\widetilde{\pi}_{*}(R)}\leq \dim_{\F}H_p^{2m}(U,\Z).$$
But by Lemma \ref{R}, we already know that there exists $x\in \widetilde{\pi}_{*}(T)$ such that $\frac{x+(-1)^{m-1}\widetilde{\pi}_{*}(s^{*}(\Sigma))}{p}\in H^{2m}(\widetilde{M},\Z)$.

We are going to deduce $\dim_{\F}\widetilde{T}/\widetilde{\pi}_{*}(T)$ from $\dim_{\F}\widetilde{R}/\widetilde{\pi}_{*}(R)$.
If $\widetilde{\pi}_{*}(s^{*}(\Sigma))$ is divisible by $p$ in $H^{2m}(\widetilde{M},\Z)$, then  $\overline{\frac{\widetilde{\pi}_{*}(s^{*}(\Sigma))}{p}}\in (\widetilde{R}/\widetilde{\pi}_{*}(R))\setminus(\widetilde{T}/\widetilde{\pi}_{*}(T))$, if not $\overline{\frac{x+(-1)^{m-1}\widetilde{\pi}_{*}(s^{*}(\Sigma))}{
p}}\in (\widetilde{R}/\widetilde{\pi}_{*}(R))\setminus(\widetilde{T}/\widetilde{\pi}_{*}(T))$. 
In both cases: $$\dim_{\F}\frac{\widetilde{T}}{\widetilde{\pi}_{*}(T)}=\dim_{\F}\frac{\widetilde{R}}{\widetilde{\pi}_{*}(R)}-1.$$
\end{proof}
So, from Lemma \ref{LemmaOtherProof6} and (\ref{EquationOtherProof3}):
\begin{equation}
\dim_{\F}\frac{\widetilde{T}}{\widetilde{\pi}_{*}(T)}\leq\sum_{j=0}^{m-1} \ell_+^{2j}+\sum_{j=0}^{m-1} \ell_-^{2j+1}-1.
\label{EquationOtherProofTruc}
\end{equation}
From Theorem \ref{Vois} and Lemma \ref{LemmaOtherProof4}:
$$\rk T= \rk \bigoplus_{S_{d}\subset \Fix G}\bigoplus_{i=0}^{r_{d}-2} H^{2m-2i-2}(S_{d},\Z),$$
where $r_{d}$ is the codimension of the component $S_{d}$ of $\Fix G$.
Hence, 
we obtain:
\begin{equation}
\rk T=h^{2*}(\Fix G,\Z)-2,
\label{EquationOtherProof1}
\end{equation}
With the same proof as Lemma \ref{MainProofLemma}, it follows:
\begin{equation}
\discr \widetilde{\pi}_*(T)=p^{h^{2*}(\Fix G,\Z)-2}.
\label{EquationOtherProof2}
\end{equation}
Combining (\ref{EquationOtherProof2}), (\ref{EquationOtherProofTruc}) and (\ref{BasicLatticeTheory}), then applying Proposition \ref{BasicAlphaInequality2}, we have
\begin{align*}
\log_p\discr \widetilde{T}-h^{2*}(\Fix G,\Z)&\geq -2\left[\sum_{j=0}^{m-1} \ell_+^{2j}+\sum_{j=0}^{m-1} \ell_-^{2j+1}\right]\\
&\geq-\left[\sum_{j=0}^{m-1} \ell_+^{2j}+\sum_{j=0}^{m-1} \ell_-^{2j+1}+\sum_{j=0}^{m-1} \ell_+^{2n-2j}+\sum_{j=0}^{m-1} \ell_-^{2n-2j-1}\right]\\
&\geq-\ell_{+}^{2*}-\ell_-^{2*+1}+\ell_{+}^{2m}.
\end{align*}
Hence, using hypothesis (ii):
$$\log_p\discr \widetilde{T}\geq \ell_{+}^{2m}.$$

Finally, we conclude with (\ref{NTMainProof2}) and Remark \ref{BasicResolutionRmk}.
\begin{rmk}\label{remarkdefin2}
Same remark as Remark \ref{remarkdefin}, we have also proved that 
$\widetilde{\pi}_{*}(s^{*}(H^{n}(X,\Z)))$ is primitive in $H^*(\widetilde{M},\Z)$.
\end{rmk}
\section{Examples of applications}\label{application}
\subsection{Simply connected surfaces}
Boissi\`ere, Sarti and Nieper-Wisskirchen have proved the following (\cite[Proposition 4.5]{SmithTh}):
\begin{prop}
Let $X$ be a simply connected compact surface and $G$ be an automorphism group with at least a fixed point. Then, the spectral sequence of equivariant cohomology with coefficients in $\F$ is degenerate a the second page.
\end{prop} 
Then, from this proposition and Theorem \ref{Maincor}, we obtain the following corollary.
\begin{cor}
Let $X$ be a simply connected compact K\"ahler surface and $G$ be an automorphism group of prime order $p\leq 19$. 
Assume that $\Fix G$ is finite but not empty and contains simple points.
Then,
$(X,G)$ is $H^2$-normal.
\end{cor}
This result can be applied for example to a K3 surface endowed with a symplectic involution.

\subsection{The case of the $K3^{[2]}$-type manifolds}\label{K3typeApplication}
Boissi\`ere, Sarti and Nieper-Wisskirchen also proved in \cite[Theorem 1.1]{SmithTh} that for $X$ a 
$K3^{[2]}$-type manifold and $G$ an automorphism group of prime order $p\notin\left\{2,5,23\right\}$,
the spectral sequence of equivariant cohomology with coefficients in $\F$ is degenerate a the second page. 

Together with Theorem \ref{Maincor}, this implies:
\begin{cor}\label{CorHilb2}
Let  $X$ be a $K3^{[2]}$-type manifold and $G$ be an automorphism group of prime order $p\notin\left\{2,5,23\right\}$.
Let $c:=\codim \Fix G$.
Assume that:
\begin{itemize}
\item[(i)]
all fixed points of $G$ are simple;
\item[(ii)]
$c\geq3$.
\end{itemize}
Then,
$(X,G)$ is $H^2$ and $H^4$-normal.
\end{cor}
We remark that $H^*(X,\Z)$ is torsion-free by \cite{Markman} and $H^*(\Fix G,\Z)$ is torsion-free because $\Fix G$ contains only isolated points or smooth complex curves.
So, $H^4$-normality follows from Theorem \ref{Maincor}. It remains to show the $H^2$-normality. To do so, we will use Proposition \ref{Hkt}.

\begin{lemme}\label{Hktprop}
Let $X$ be a topological space. Let $t$ and $k$ be integers and $p$ a prime number. Assume that $H^{*}(X,\Z)$ is $p$-torsion-free. We denote by $\overline{x}\in H^{*}(X,\F)$ the reduction modulo $p$ of an element $x\in H^{*}(X,\Z)$.

If the cup product map $\Sym^t H^{k}(X,\mathbb{Q})\rightarrow H^{kt}(X,\mathbb{Q})$ is an isomorphism and $\frac{H^{kt}(X,\Z)}{\Sym^t H^{k}(X,\Z)}$ is $p$-torsion-free, then:
\begin{align*}
\mathscr{S}:\ &\Sym^t H^{k}(X,\F)\rightarrow H^{kt}(X,\F)\\
&\overline{x_{1}}\otimes...\otimes \overline{x_{t}}\mapsto \overline{x_{1}\cdot...\cdot x_{t}}
\end{align*}
is an isomorphism.
\end{lemme}\label{tkt}
\begin{proof}
\begin{itemize}
\item[]
\hspace{-0.90cm}We prove the injectivity.

Let $\overline{x_{1}}\otimes...\otimes \overline{x_{t}}\in \Sym^t H^{k}(X,\F)$ such that $\overline{x_{1}\cdot...\cdot x_{t}}=0$.
Then, there exists $y\in H^{kt}(X,\Z)$ such that $x_{1}\cdot...\cdot x_{t}=py$.
Hence, $\dot{y}\in H^{kt}(X,\Z)/\Sym^t H^{k}(X,\Z)$ is a $p$-torsion element (here $\dot{y}$ is the class of $y$ modulo $\Sym^t H^{k}(X,\Z)$). Hence, by the hypothesis $\dot{y}=0$.
It follows that $y\in \Sym^t H^{k}(X,\Z)$, so $y=y_{1}\cdot...\cdot y_{t}$ with $y_{i}\in H^{k}(X,\Z)$.
Since $\Sym^t H^{k}(X,\mathbb{Q})$ $\rightarrow H^{kt}(X,\mathbb{Q})$ is injective,
$x_{1}\otimes... \otimes x_{t}=py_{1}\otimes...\otimes y_{t}$.
So, $\overline{x_{1}}\otimes...\otimes \overline{x_{t}}=0$.
\item[]
We prove the surjectivity.

Let $\overline{y}\in H^{kt}(X,\F)$, with $y\in H^{kt}(X,\Z)$.
Since $\Sym^t H^{k}(X,\mathbb{Q})\rightarrow H^{kt}(X,\mathbb{Q})$ is an isomorphism,
there is $q\in\mathbb{N}$ and $x_{1}\otimes... \otimes x_{t}\in \Sym^t H^{k}(X,$ $\mathbb{Z})$ such that $\frac{1}{q}x_{1}\cdot... \cdot x_{t}=y$.
Hence, $\dot{y}\in H^{kt}(X,\Z)/\Sym^t H^{k}(X,\Z)$ is a $q$-torsion element. 
But since $H^{kt}(X,\Z)/\Sym^t H^{k}(X,\Z)$ is $p$-torsion-free, $p$ does not divide $q$.
And $\mathscr{S}(\frac{1}{\overline{q}}\overline{x_{1}}\otimes...\otimes \overline{x_{t}})=\overline{y}$.
\end{itemize}
\end{proof}
When $X$ is a $K3^{[2]}$-type manifold,
we know from \cite{Verbitsky} that $\Sym^2 H^{2}(X,\mathbb{Q})\rightarrow H^{4}(X,\mathbb{Q})$ is an isomorphism. Moreover, from \cite[Proposition 6.6]{SmithTh}, we know that:
$$H^{4}(X,\Z)/\Sym^2 H^{2}(X,\Z)=(\Z/2\Z)^{23}\oplus(\Z/5\Z).$$
It follows from the previous lemma that for $p\notin\left\{2,5\right\}$, 
\begin{equation}
\Sym^2 H^{2}(X,\F)\simeq H^{4}(X,\F).
\label{Sym2}
\end{equation}
So, by Proposition \ref{Hkt}, we obtain the following corollary which finishes the proof of Corollary \ref{CorHilb2}.
\begin{cor}\label{H2H4}
Let $X$ be a  $K3^{[2]}$-type manifold and $G$ be an automorphism group of prime order $p\notin\left\{2,5\right\}$.
If $(X,G)$ is $H^4$-normal, $(X,G)$ is $H^2$-normal.
\end{cor}

We describe the following example as an application of Corollary \ref{CorHilb2}. First, we recall that an automorphism on a Hilbert scheme of points $S^{[n]}$ on a K3 surface $S$ is called \emph{natural} if it is the automorphism induced by an automorphism on the K3 surface $S$.
\begin{ex}
Let $X$ be a Hilbert scheme of 2 points on a K3 surface and $G$ be a natural non-symplectic automorphism group of order 3
on $X$. Then, $(X,G)$ is $H^4$ and $H^2$-normal.
\end{ex} 
\begin{proof}
Let $G$ be such an isomorphism group on $S^{[2]}$. Since the action on the K3 surface is non-symplectic, the fixed points will be simple. Moreover, from \cite[Section 6.1]{ClassiSarti}, 
$\Fix G$ consists in 3 isolated points and 3 rational curves. 
\end{proof}

\begin{rmk}
An analogous of Corollary \ref{CorHilb2} could also be stated when $p=2$ or $5$ and
$X$ is a Hilbert scheme of 2 points on a K3 surface and $G$ a natural 
automorphism group using statement (2) of \cite[Theorem 1.1]{SmithTh}.
\end{rmk}
\subsection{Integral basis of the Hilbert scheme of two points on a surface}\label{IntegralBasisHilbertScheme}
In this section, we complete the work started in \cite[Section 7]{Kapfer}. 
Let $A$ be a smooth compact K\"ahler surface with $H^*(A,\Z)$ 2-torsion-free and $A^{[2]}$ the Hilbert scheme of two points. It can be constructed as follows:
Consider the direct product $A\times A$. Denote 
$$b:\Bl_{\Delta}(A\times A)\rightarrow A\times A$$
the blow-up along the diagonal $h:\Delta\simeq A$ with exceptional divisor $E$ (for simplicity we denote by $x$ the pull-back $h^*(x)$ for all $x\in H^*(A,\Z)$). Let $f$ be a fiber of the map $E\rightarrow \Delta$. 
Let $j:E\hookrightarrow\Bl_{\Delta}(A\times A)$, $g:\Delta\hookrightarrow A\times A$ be the embeddings 
and $p_1:A\times A\rightarrow A$, $p_2:A\times A\rightarrow A$ the projections. We also denote by $A$ the class which generates $H^{0}(A,\Z)$ and by $pt$ the class which generates $H^{4}(A,\Z)$. We have the following commutative diagram:
\begin{equation}
\xymatrix{
 E\ar[d]^{b_{|E}}\ar@{^{(}->}[rr]^{j \ \ \ \ \ }& & \Bl_{\Delta}(A\times A)\ar[d]^{b}\\
   \Delta\ar[dr]^{h} \ar[ddr]^{h}\ar@{^{(}->}[rr]^{g}  & &\ar[dl]^{p_1}\ar[ddl]^{p_2}A\times A\\
  & A & \\
  & A. &
   }
   \label{UnDiagramTechnique}
\end{equation}
The action of $\mathfrak{S}_2$ extends to an action on $\Bl_{\Delta}(A\times A)$.
Then, $A^{[2]}=\Bl_{\Delta}(A\times A)/\mathfrak{S}_2$ and we denote by $\pi: \Bl_{\Delta}(A\times A)\rightarrow A^{[2]}$ the quotient map. Moreover, from \cite[Theorem 2.2]{Totaro}, we know that $H^{*}(A^{[2]},\Z)$ is 2-torsion-free.
We want to prove the following proposition:
\begin{prop}\label{BasisHilbertH2Main}
Let $A$ be a smooth compact K\"ahler surface with $H^*(A,\Z)$ 2-torsion-free. 
The integral cohomology of $A^{[2]}$ can be expressed as follows:
\begin{itemize}
\item[(i)]
$H^1(A^{[2]},Z) = \pi_*(b^*(H^1(A\times A,\Z)))$,
\item[(ii)]
$H^2(A^{[2]},Z) = \pi_*(b^*(H^2(A\times A,\Z)))\oplus\Z\frac{\pi_*(E)}{2}$,
\item[(iii)]
$H^3(A^{[2]},Z) = \pi_*(b^*(H^3(A\times A,\Z)))\oplus\frac{1}{2}\pi_*j_*b_{|E}^*(H^1(\Delta,\Z))$,
\item[(iv)]
The lattices $\pi_*(b^*(H^4(A\times A,\Z)))$ and  $\pi_*j_*b_{|E}^*(H^2(\Delta,\Z))$ are primitive in $H^4(A^{[2]},\Z)$.
Moreover:
$$Q:=\frac{H^4(A^{[2]},\Z)}{\pi_*(b^*(H^4(A\times A,\Z)))\oplus \pi_*j_*b_{|E}^*(H^2(\Delta,\Z))}=(\Z/2\Z)^{b_2(A)}$$
and there exists an isometry $\theta$ of $H^{2}(A,\Z)$ such that the classes $\frac{\pi_*j_*b_{|E}^*(\theta(x))+\pi_*b^{*}(x\otimes x)}{2}$ generate $Q$.
\item[(v)]
$H^5(A^{[2]},Z) = \pi_*(b^*(H^5(A\times A,\Z)))\oplus\pi_*j_*b_{|E}^*(H^3(\Delta,\Z))$,
\item[(vi)]
$H^6(A^{[2]},Z) = \pi_*(b^*(H^2(A\times A,\Z)))\oplus\pi_*(f)$,
\item[(vii)]
$H^7(A^{[2]},Z) = \pi_*(b^*(H^7(A\times A,\Z)))$.
\end{itemize}
\end{prop}
This section is dedicated to the proof of this proposition.
Statements (iii) and (v) were proven in \cite[Proposition 7.1]{Kapfer}. Before proving the remaining statements, we will see that they are also interesting because they provide the ring structure of $H^*(A^{[2]},\Z)$. 
\begin{lemme}\label{IntersectionHilb2}
\begin{itemize}
\item[(i)]
For all $x_1,x_2,y_1,y_2$ in $H^{*}(A,\Z)$:
$$\left(\pi_*b^{*}(x_1\otimes y_1)\right)\cdot \left(\pi_*b^{*}(x_2\otimes y_2)\right)=\pi_*b^*\left((x_1\cdot x_2)\otimes(y_1\cdot y_2)+(x_1\cdot y_2)\otimes(y_1\cdot x_2)\right).$$
\item[(ii)]
For all $x,y,z$ in $H^{*}(A,\Z)$:
$$\left(\pi_*b^{*}(x\otimes y)\right)\cdot \left(\pi_*j_*b_{|E}^*(z)\right)=2\pi_*j_*b_{|E}^*(x\cdot y \cdot z).$$
\item[(iii)]
$\left(\frac{1}{2}\pi_*j_*(E)\right)^2=\frac{1}{2}\left(\pi_*j_*b_{|E}^*(c_1(A))-\pi_*b^*(\Delta)\right)$.
\item[(iv)]
For all $x,y$ in $H^{*}(A,\Z)$:
$$\left(\pi_*j_*b_{|E}^*(x)\right)\cdot\left(\pi_*j_*b_{|E}^*(y)\right)=2\left(\pi_*j_*b_{|E}^*(x\cdot y \cdot c_1(A))-\pi_*b^*g_*(x\cdot y)\right).$$
\end{itemize}
\end{lemme}
\begin{proof}
\begin{itemize}
\item[(i)]
Using lemma \ref{commut} (ii), we have:
\begin{align*}
\left(\pi_*b^{*}(x_1\otimes y_1)\right)\cdot \left(\pi_*b^{*}(x_2\otimes y_2)\right)&=\frac{1}{4}\left(\pi_*b^{*}(x_1\otimes y_1+y_1\otimes x_1)\right)\cdot \left(\pi_*b^{*}(x_2\otimes y_2+y_2\otimes x_2)\right)\\
&=\frac{1}{2}\pi_*\left(\left(b^{*}(x_1\otimes y_1+y_1\otimes x_1)\right)\cdot \left(b^{*}(x_2\otimes y_2+y_2\otimes x_2)\right)\right)\\
&=\frac{1}{2}\pi_*b^{*}\left((x_1\otimes y_1+y_1\otimes x_1)\cdot (x_2\otimes y_2+y_2\otimes x_2)\right)\\
&=\pi_*b^*\left((x_1\cdot x_2)\otimes(y_1\cdot y_2)+(x_1\cdot y_2)\otimes(y_1\cdot x_2)\right).
\end{align*}
\item[(ii)]
Using Lemma \ref{commut} (ii), then the projection formula and finally the commutativity of diagram (\ref{UnDiagramTechnique}), we have:
\begin{align*}
\left(\pi_*b^{*}(x\otimes y)\right)\cdot \left(\pi_*j_*b_{|E}^*(z)\right)&=\frac{1}{2}\left(\pi_*b^{*}(x\otimes y+y\otimes x)\right)\cdot \left(\pi_*j_*b_{|E}^*(z)\right)\\
&=\pi_*\left(\left(b^{*}(x\otimes y+y\otimes x)\right)\cdot \left(j_*b_{|E}^*(z)\right)\right)\\
&=\pi_*j_*\left(\left(j^*b^{*}(x\otimes y+y\otimes x)\right)\cdot \left(b_{|E}^*(z)\right)\right)\\
&=\pi_*j_*b_{|E}^*\left(g^*\left(x\otimes y+y\otimes x\right)\cdot z\right).
\end{align*}
By definition: $x\otimes y=p_1^*(x)\cdot p_2^*(y).$
It follows:
$g^*(x\otimes y)=h^*(x\cdot y)$
and by convention, we do not write the $h^*$.
So:
$$\left(\pi_*b^{*}(x\otimes y)\right)\cdot \left(\pi_*j_*b_{|E}^*(z)\right)=2\pi_*j_*b_{|E}^*\left(x\cdot y \cdot z\right).$$
\item[(iii)]
By \cite[Proposition 6.7]{Fulton}, we have:
$$b^{*}(\Delta)=j_*(c_{1}(\mathscr{F})),$$
where $\mathscr{F}:=b_{|E}^{*}(\mathscr{N}_{\Delta/A\times A})/\mathscr{N}_{E/\Bl_{\Delta}(A\times A)}$.
Moreover, $j_*(\mathscr{N}_{E/\Bl_{\Delta}(A\times A)})=j_*(E)^2$; it follows:
$$b^*(\Delta)=j_*(b_{|E}^{*}(c_1(\mathscr{N}_{\Delta/A\times A})))-j_*(E)^2.$$
Furthermore, we remark that $$c_1(\mathscr{N}_{\Delta/A\times A})=c_1\left(\frac{T_{A\times A|\Delta}}{T_{\Delta}}\right)=c_1(T_{\Delta})=h^*c_1(T_{A}),$$
so:
$$j_*(E)^2=j_*(b_{|E}^*(c_1(A)))-b^*(\Delta).$$
Then, the result follows from Lemma \ref{commut} (ii). 
\item[(iv)]
By (ii) and then by (iii), (i):
\begin{align*}
\left(\pi_*j_*b_{|E}^*(x)\right)\cdot\left(\pi_*j_*b_{|E}^*(y)\right)&=\frac{1}{4}\left(\pi_*b^{*}(x\otimes1)\right)\cdot (\pi_*j_*(E))^2\cdot\left(\pi_*b^{*}(y\otimes1)\right)\\
&=\frac{1}{2}\left(\pi_*b^{*}((x\cdot y)\otimes 1+x\otimes y)\right)\cdot \left(\pi_*j_*(b_{|E}^*(c_1(A)))-\pi_*b^*(\Delta)\right).
\end{align*}
Then, again by (ii):
\begin{equation}
\left(\pi_*j_*b_{|E}^*(x)\right)\cdot\left(\pi_*j_*b_{|E}^*(y)\right)=\frac{1}{2}\left(4\pi_*j_*b_{|E}^{*}(x\cdot y\cdot c_1(A))- \pi_*b^{*}((x\cdot y)\otimes 1+x\otimes y)\cdot\pi_*b^*(\Delta)\right).
\label{GrosseEquationIntersection}
\end{equation}
Now, we calculate $\pi_*b^{*}((x\cdot y)\otimes 1+x\otimes y)\cdot\pi_*b^*(\Delta)$ separately:
$$\pi_*b^{*}((x\cdot y)\otimes 1+x\otimes y)\cdot\pi_*b^*(\Delta)=\frac{1}{2}\pi_*b^{*}((x\cdot y)\otimes 1+1\otimes(x\cdot y) +x\otimes y+y\otimes x)\cdot\pi_*b^*(\Delta).$$
By Lemma \ref{commut} (ii), then by projection formula and finally by the definition $x\otimes y=p_1^*(x)\cdot p_2^*(y)$, we have:
\begin{align*}
\pi_*b^{*}((x\cdot y)\otimes 1+x\otimes y)\cdot\pi_*b^*(\Delta)&=\pi_*b^{*}\left(\left[(x\cdot y)\otimes 1+1\otimes(x\cdot y) +x\otimes y+y\otimes x\right]\cdot\Delta\right)\\
&=\pi_*b^{*}g_*g^*\left((x\cdot y)\otimes 1+1\otimes(x\cdot y) +x\otimes y+y\otimes x\right)\\
&=4\pi_*b^{*}g_*(x\cdot y).
\end{align*}
So, by (\ref{GrosseEquationIntersection}):
$$\left(\pi_*j_*b_{|E}^*(x)\right)\cdot\left(\pi_*j_*b_{|E}^*(y)\right)=\frac{1}{2}\left(4\pi_*j_*b_{|E}^{*}(x\cdot y\cdot c_1(A))-4\pi_*b^{*}g_*(x\cdot y) \right).$$
\end{itemize}
\end{proof}
Now we start the proof of Proposition \ref{BasisHilbertH2Main}, we first prove (i) and (vii).
By Theorem \ref{Vois}, we have:
$$H^k(\Bl_{\Delta}(A\times A),\Z)=b^*(H^k(A\times A,\Z)),$$
for $k=1$ and $k=7$. It follows from K\"unneth formula that $\ell_+^1(\Bl_{\Delta}(A\times A))=0$. 
Hence, from Proposition \ref{MainBasicAlphaInequality}, the coefficient of surjectivity $\alpha_1(\Bl_{\Delta}(A\times A))=\alpha_7(\Bl_{\Delta}(A\times A))=0$. It follows (i) and (vii).

Now let us prove (ii) and (vi).
By Theorem \ref{Vois}, we have:
\begin{equation}
H^2(\Bl_{\Delta}(A\times A),\Z)=b^*(H^2(A\times A,\Z))\oplus j_*b_{|E}^*(H^0(\Delta,\Z)).
\label{H2Hilb2bis}
\end{equation}
We have $j_*b_{|E}^*(H^0(\Delta,\Z))=\Z E$. 
From K\"unneth formula, we remark that $\ell_+^2(\Bl_{\Delta}(A\times A))=1$.
Hence, from Proposition \ref{MainBasicAlphaInequality}:
$$\alpha_2(\Bl_{\Delta}(A\times A))+\alpha_6(\Bl_{\Delta}(A\times A))=1.$$
Moreover, we have $\pi_{*}(\mathcal{O}_{\Bl_{\Delta}(A\times A)})=\mathcal{O}_{A^{[2]}}\oplus\mathcal{L}$, with $\mathcal{L}^{2}=\mathcal{O}_{A^{[2]}}(\pi_*(E))$.
So, as it is well known $\pi_*(E)$ is divisible by 2.
Hence, $\alpha_2(\Bl_{\Delta}(A\times A))=1$ and $\alpha_6(\Bl_{\Delta}(A\times A))=0$. 
So, (ii) follows from (\ref{H2Hilb2bis}).
Similarly, by Theorem \ref{Vois}, we have:
$$
H^6(\Bl_{\Delta}(A\times A),\Z)=b^*(H^6(A\times A,\Z))\oplus j_*b_{|E}^*(H^4(\Delta,\Z)),
$$
with $j_*b_{|E}^*(H^4(\Delta,\Z))=\Z f$. So, we obtain (vi) because $\alpha_6(\Bl_{\Delta}(A\times A))=0$. 

It remains to study the most interesting part which is $H^4(A^{[2]},\Z)$. Again by Theorem \ref{Vois}, we have:
\begin{equation}
H^4(\Bl_{\Delta}(A\times A),\Z)=b^*(H^4(A\times A,\Z))\oplus j_*b_{|E}^*(H^2(\Delta,\Z)).
\label{H4Hilb2bis}
\end{equation}
First, we are going to use Remark \ref{remarkdefin2} to show that $\pi_*(b^*(H^4(A\times A,\Z)))$ is primitive in $H^4(A^{[2]},\Z)$. To do so, we verify the hypothesis of Theorem \ref{Main2} for the couple $(A\times A,\mathfrak{S}_2)$.
By K\"unneth formula, we have $\ell_+^{2k+1}(A\times A)=0$ for all $k\in\left\{0,1,2,3\right\}$, $\ell_+^{4}(A\times A)=b_2(A)$ and $\ell_+^{2}(A\times A)=0$. It follows condition (ii) of Theorem \ref{Main2}. Moreover, $\Delta\in H^{4}(A\times A,\Z)$ is primitive because $\Delta\cdot A\otimes pt=1$. 
So, by Theorem \ref{Main2}, $(A\times A,\mathfrak{S}_2)$ is $H^4$-normal and 
by Remark \ref{remarkdefin2} the lattice $\pi_*(b^*(H^4(A\times A,\Z)))$ is primitive in $H^4(A^{[2]},\Z)$.
Moreover, by Corollary \ref{MainBasicCor}, $\log_2\discr \pi_*(b^*(H^4(A\times A,\Z)))=b_2(A)$ and by Lemma \ref{commut} (ii), $\log_2\discr (\pi_*j_*b_{|E}^*(H^2(\Delta,\Z)))=b_2(A)$. 
So:
\begin{equation}
\discr \pi_*(b^*(H^4(A\times A,\Z)))=\discr\pi_*j_*b_{|E}^*(H^2(\Delta,\Z)).
\label{BasisHilb2H4}
\end{equation}
Since $A^{[2]}$ is smooth, we know by (\ref{BasicLatticeTheory2}) that $\discr \pi_*(b^*(H^4(A\times A,\Z)))=\discr \pi_*(b^*(H^4(A\times A,\Z)))^\bot$.
Since $\pi_*j_*b_{|E}^*(H^2(\Delta,\Z))\subset\pi_*(b^*(H^4(A\times A,\Z)))^\bot$, by (\ref{BasisHilb2H4}), $\pi_*j_*b_{|E}^*(H^2(\Delta,\Z))=\pi_*(b^*(H^4(A\times A,\Z)))^\bot$. So, $\pi_*j_*b_{|E}^*(H^2(\Delta,\Z))$ is also primitive in $H^4(A^{[2]},\Z)$. 

Now, we are going to calculate $$\frac{H^4(A^{[2]},\Z)}{\pi_*(b^*(H^4(A\times A,\Z)))\oplus \pi_*j_*b_{|E}^*(H^2(\Delta,\Z))}.$$
For a lattice $L$, we denote by $A_L:=L^\vee/L$ its discriminant group and for $x\in L^\vee$, we denote by $\overline{x}$ its reduction in $A_L$.
Let denote $N:=\pi_*(b^*(H^4(A\times A,\Z)))$ and $N^{\bot}=\pi_*j_*b_{|E}^*(H^2(\Delta,\Z))$. 
By Lemma \ref{commut} (ii), $\frac{1}{2}N^{\bot}= (N^{\bot})^\vee$. It follows that: 
\begin{equation}
A_{N^{\bot}}\simeq \frac{\frac{1}{2}N^{\bot}}{N^{\bot}}=(\Z/2\Z)^{b_2(A)}.
\label{BasisHilb2Equation1}
\end{equation}
Since $H^4(A^{[2]},\Z)$ is unimodular, by (\ref{BasicLatticeTheory3}), we have an isometry between the discriminant groups:
$$\nu:A_{N}\simeq A_{N^{\bot}}.$$
Again by Lemma \ref{commut} (ii), $\frac{1}{2}\pi_*b^{*}(x\otimes x)\in N^\vee$, 
for all $x\in H^2(A,\Z)$.
So, $A_{N}$ is generated by the classes $\overline{\frac{1}{2}\pi_*b^{*}(x\otimes x)}$, where $x\in H^2(A,\Z)$.
Let $\left\{\left.z(x)\in N^{\bot}\right|\ x\in H^{2}(A,\Z)\right\}$ be a basis of $N^{\bot}$ such that $\nu\left(\overline{\frac{\pi_*b^{*}(x\otimes x)}{2}}\right)=\overline{\frac{z(x)}{2}}$.
Since $\nu$ is an isometry, we have: 
\begin{equation}
\frac{\pi_*b^{*}(x\otimes x)}{2}\cdot\frac{\pi_*b^{*}(y\otimes y)}{2}=\frac{z(x)}{2}\cdot\frac{z(y)}{2}\mod \Z.
\label{BasisHilb2Equation0}
\end{equation}
However, by Lemma \ref{IntersectionHilb2} (i): 
$$\pi_*b^{*}(x\otimes x)\cdot\pi_*b^{*}(y\otimes y)=2(x\cdot y)^2.$$
And by Lemma \ref{IntersectionHilb2} (iv):
$$\pi_*j_*b_{|E}^*(x)\cdot \pi_*j_*b_{|E}^*(y)=2x\cdot y.$$
Since $(x\cdot y)=(x\cdot y)^2 \mod 2$, it follows from (\ref{BasisHilb2Equation0}) that:
$$\frac{z(x)}{2}\cdot\frac{z(y)}{2}=\frac{\pi_*j_*b_{|E}^*(x)}{2}\cdot \frac{\pi_*j_*b_{|E}^*(y)}{2}\mod \Z.$$
So, we have constructed an isometry of $A_{N^{\bot}}$ sending $\overline{\frac{z(x)}{2}}$ to $\overline{\frac{\pi_*j_*b_{|E}^*(x)}{2}}$.
However, \cite[Theorem 3.6.3]{Lattice} says that the natural map $\mathcal{O}(N^\bot)\rightarrow \mathcal{O}(A_{N^{\bot}})$ is surjective 
when $N^{\bot}$ is an indefinite lattice which is always the case apart when $A$ is the projective space; but in this cases , $\rk  N^\bot=1$ and the natural map $\mathcal{O}(N^\bot)\rightarrow \mathcal{O}(A_{N^\bot})$ is obviously surjective.

It follows that there is an isometry $\widetilde{\theta}$ of $N^{\bot}$ such that:
$$\widetilde{\theta}(\pi_*j_*b_{|E}^*(x))=z(x)\mod 2N^\bot.$$
Then, $\widetilde{\theta}$ provides an isometry $\theta$ on $H^2(A,\Z)$ such that:
$$\pi_*j_*b_{|E}^*(\theta(x))=z(x)\mod 2N^\bot.$$
Then, by definition of $\nu$, the elements 
$$\pi_*j_*b_{|E}^*(\theta(x))+\pi_*b^{*}(x\otimes x)$$ are divisible by 2 for all $x\in H^2(A,\Z)$.
\subsection{Symplectic groups and Beauville--Bogomolov forms}
We will study the Beauville--Bogomolov form of some quotients of an irreducible symplectic $2n$-fold $X$ by a symplectic group $G$ of prime order.
So, $M:=X/G$ will be a singular primitively symplectic orbifold. 
One of our most important ingredients will be the Fujiki formula generalized by Matsushita in \cite{Mat} (see \cite[Section 1.2.2]{Lol} for an overview). 
Assume that $\codim \sing M\geq 4$, then if $Y=M$ or $X$, we have for all $\alpha\in H^2(Y,\Z)$:
\begin{equation}
\alpha^{2n}=c_Y B_Y(\alpha,\alpha)^n,
\label{Fujiki}
\end{equation}
where $c_Y$ is the Fujiki constant. Moreover, if $\omega\neq0$ is the holomorphic 2-form of $Y$, the convention is:
\begin{equation}
B_Y(\omega+\overline{\omega},\omega+\overline{\omega})>0.
\label{FujikiConvention}
\end{equation}
Furthermore, the convention is to choose the Beauville--Bogomolov form to be integral and indivisible.  

We know the Beauville--Bogomolov form $B_X$ of $X$. Then, we will deduce the Beauville--Bogomolov form $B_M$ of $M$ from $B_X$ and $H^2(M,\Z)$.
The link between $B_X$ and $B_M$ is the matter of the following proposition.
\begin{prop}\label{beau}
Let $X$ be an irreducible symplectic manifold of dimension $2n$ and $G$ be a symplectic group of prime order $p$ with $\codim\Fix G\geq 4$.
Then,
$$B_{M}(\pi_{*}(\alpha),\pi_{*}(\beta))=\sqrt[n]{\frac{c_Xp^{2n-1}}{c_{M}}}B_{X}(\alpha,\beta),$$
where $c_{M}$ is the Fujiki constant of $M$ and $\alpha$, $\beta$ are in $H^{2}(X,\Z)^{G}$.
\end{prop}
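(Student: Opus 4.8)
The plan is to reduce everything to the Fujiki relation, which is available both on the smooth fourfold $X$ and on the quotient $M:=X/G$. Since $\Fix G$ consists of isolated points, $\Sing M$ is finite, so $\codim \Sing M = 4$ and $M$ is a $\mathbb{Q}$-factorial irreducible symplectic variety (quotient singularities are $\mathbb{Q}$-factorial); thus Theorem \ref{Form} and its polarized form Proposition \ref{beauville} apply to $M$ with Fujiki constant $C_{X/G}$. On $X$, which is of $K3^{[2]}$-type, the Fujiki constant equals $3$ by deformation invariance and Theorem \ref{S2beauv}.

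First I would apply Lemma \ref{commut} 3) with $k=2$ and $q=4$, so that $kq = 8 = 2\dim X$: for $\alpha_1,\dots,\alpha_4 \in H^2(X,\Z)^G$ one has $\pi_*(\alpha_1)\cdots\pi_*(\alpha_4) = p^{3}\,\alpha_1\cdots\alpha_4$ in $H^{8}(M,\Z)\cong\Z$. Specializing $\alpha_1=\cdots=\alpha_4=\alpha$ gives $\pi_*(\alpha)^4 = p^3\,\alpha^4$. Applying the Fujiki relation on each side, namely $\pi_*(\alpha)^4 = C_{X/G}\,B_M(\pi_*\alpha,\pi_*\alpha)^2$ and $\alpha^4 = 3\,B_X(\alpha,\alpha)^2$, yields the scalar identity
$$C_{X/G}\,B_M(\pi_*\alpha,\pi_*\alpha)^2 = 3p^3\,B_X(\alpha,\alpha)^2,$$
which holds for all $\alpha$ in $W:=H^2(X,\Z)^G\otimes\mathbb{C}$, since the multilinear identity of Lemma \ref{commut} 3) and the map $\pi_*$ extend $\mathbb{C}$-linearly.

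Next I would promote this to an identity of bilinear forms. Writing $q_M(\alpha):=B_M(\pi_*\alpha,\pi_*\alpha)$ and $q_X(\alpha):=B_X(\alpha,\alpha)$ as quadratic polynomials on $W$, the relation reads $C_{X/G}\,q_M^2 = 3p^3\,q_X^2$ in the polynomial ring $\mathbb{C}[W]$. Factoring this as $\bigl(\sqrt{C_{X/G}}\,q_M - \sqrt{3p^3}\,q_X\bigr)\bigl(\sqrt{C_{X/G}}\,q_M + \sqrt{3p^3}\,q_X\bigr) = 0$ in an integral domain forces a single global sign, so $q_M = \varepsilon\sqrt{3p^3/C_{X/G}}\,q_X$ with $\varepsilon=\pm1$ independent of $\alpha$. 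Polarizing this quadratic identity (in characteristic $0$ a symmetric bilinear form is determined by its quadratic form) gives $B_M(\pi_*\alpha,\pi_*\beta) = \varepsilon\sqrt{3p^3/C_{X/G}}\,B_X(\alpha,\beta)$, and restricting to the integral lattice recovers the statement up to the sign.

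The main obstacle is pinning down $\varepsilon=+1$; the factorization rules out a sign varying with $\alpha$ but not the overall sign. To settle it I would exhibit one class on which both forms are manifestly positive. Averaging a Kähler class of $X$ over $G$ produces a $G$-invariant Kähler class $\kappa\in H^2(X,\mathbb{R})^G$, so $B_X(\kappa,\kappa)>0$; its image $\pi_*\kappa$ lies in the positive cone of $M$, whence $B_M(\pi_*\kappa,\pi_*\kappa)>0$ by the positivity and signature statements of Theorem \ref{Form}. Comparing signs on $\kappa$ forces $\varepsilon=+1$, which completes the proof.
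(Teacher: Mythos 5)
Your proposal is correct and follows essentially the same route as the paper: apply Lemma \ref{commut} 3) to get $(\pi_{*}\alpha)^{4}=p^{3}\alpha^{4}$, compare the Fujiki relations on $X$ and on $X/G$, and fix the sign by evaluating on a class where both forms are positive. The paper fixes the sign by invoking condition (2) of Theorem \ref{Form} applied to the ($G$-invariant) class $\omega+\overline{\omega}$ rather than an averaged K\"ahler class, and leaves the polarization and global-sign bookkeeping implicit, which you have usefully spelled out.
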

\begin{proof}
By (\ref{Fujiki}), we have: 
\begin{equation}
(\pi_{*}(\alpha))^{2n}=c_{M}B_{M}(\pi_{*}(\alpha),\pi_{*}(\alpha))^{n},
\label{jenesaisplus}
\end{equation}
and:
\begin{equation}
\alpha^{2n}=c_XB_{X}(\alpha,\alpha)^{n}.
\label{jenesaisplus2}
\end{equation}
Moreover, by Lemma \ref{commut} (ii):
\begin{equation}
(\pi_{*}(\alpha))^{2n}=p^{2n-1}\alpha^{2n}.
\label{jenesaisplus3} 
\end{equation}
So, by (\ref{jenesaisplus}), (\ref{jenesaisplus2}), (\ref{jenesaisplus3}), we have:
$$B_{M}(\pi_{*}(\alpha),\pi_{*}(\alpha))^{n}=\frac{c_Xp^{2n-1}B_{X}(\alpha,\alpha)^{n}}{c_M}.$$
By (\ref{FujikiConvention}), we get the result.
\end{proof}
When we assume that $X$ is a manifold of $K3^{[2]}$-type, from \cite[Section 9]{Beauville}, we know that the Fujiki constant of $X$ is 3.
So for $\alpha$, $\beta$ in $H^{2}(X,\Z)^{G}$:
\begin{equation}
B_{M}(\pi_{*}(\alpha),\pi_{*}(\beta))=\sqrt{\frac{3p^{3}}{c_{M}}}B_{X}(\alpha,\beta).
\label{beau2}
\end{equation}

We need a variation of Lemma \ref{latima} in the case of the lattice of $H^{2}(X,\Z)$ endowed with the Beauville--Bogomolov form.
\begin{lemme}\label{lemfin}
Let $X$ be an irreducible symplectic manifold of $K3^{[2]}$-type and $G=\left\langle g\right\rangle$ be a symplectic automorphism group of prime order $p\geq 3$.
We have:
\begin{itemize}
\item[(i)]
$A_{H^{2}(X,\Z)^{G}}=(\Z/2\Z)\oplus (\Z/p\Z)^{\ell_{p}^2(X)}$,
\item[(ii)]
$\discr H^{2}(X,\Z)^{G}=2p^{\ell_{p}^2(X)}$.
\item[(iii)]
We denote $A_{H^{2}(X,\Z)^{G},p}:=(\Z/p\Z)^{\ell_{p}^2(X)}$.
Then, the projection $$\frac{H^{2}(X,\Z)}{H^{2}(X,\Z)^{G}\oplus \ker \sigma}\rightarrow A_{H^{2}(X,\Z)^{G},p}$$ is an isomorphism, 
where $\sigma=\id+g+...+g^{p-1}$.
\item[(iv)]
Moreover, let $x\in H^{2}(X,\Z)^{G}$. We have $\frac{x}{p}\in (H^{2}(X,\Z)^{G})^{\vee}$ if and only if there is $z\in H^{2}(X,\Z)$ such that $x=z+g(z)+...+g^{p-1}(z)$. 
\item[(v)]
Also:
$$\frac{\pi_*(H^{2}(X,\Z))}{\pi_*(H^{2}(X,\Z)^G)}=(\Z/p\Z)^{\ell_{p}^2(X)}.$$
\end{itemize}
\end{lemme}
\begin{proof}
We recall that $p\leq 11$ by \cite{Mong2}.
Thus, the first three assumptions follow from \cite[Lemma 6.5]{SmithTh} and its proof (the number $a_G(X)$ in \cite[Lemma 6.5]{SmithTh} equal $\ell_{p}^2(X)$ from \cite[Corollary 5.8]{SmithTh}).
Then, the proof of (iv) is identical to the proof of Lemma \ref{latima}.

It remains to prove (v). 
Let $$\mathscr{A}:=\left\{\left.\frac{z+...+g^{p-1}(z)}{p}\in (H^{2}(X,\Z)^G)^\vee\right|\ z\in H^{2}(X,\Z)\right\}.$$
By (iv): 
$$\mathscr{A}=\left\{\left.\frac{x}{p}\in (H^{2}(X,\Z)^G)^\vee\right|\ x\in H^{2}(X,\Z)^G\right\}.$$
So:
$$\frac{\mathscr{A}}{H^{2}(X,\Z)^G}=A_{H^{2}(X,\Z)^{G},p}.$$
The following morphism
$$\xymatrix@R=0pt{f:\mathscr{A}\ \ \ar[r] & \frac{\pi_*(H^{2}(X,\Z))}{\pi_*(H^{2}(X,\Z)^G)}\\
\ \ \ \ \ \ \ \ \ \frac{z+...+g^{p-1}(z)}{p}\ar[r]& \overline{\pi_*(z)}}$$
is surjective by definition. 
Quotiented $\mathscr{A}$ by $H^{2}(X,\Z)^G$, $f$
provides an isomorphism between $A_{H^{2}(X,\Z)^{G},p}$ and $\frac{\pi_*(H^{2}(X,\Z))}{\pi_*(H^{2}(X,\Z)^G)}$.
Then, the result follows from (iii).
\end{proof}
\subsection{Quotient of a $K3^{[2]}$-type manifold by an automorphism of order 11}\label{SymplecticOrder11}
As mentioned in the introduction, there are two different examples of symplectic automorphisms of order 11 on a manifold of $K3^{[2]}$-type (Example 4.5.1 and Example 4.5.2 in \cite{MongT}).
In both examples, the fixed locus is a set of 5 isolated points. But as it is explained in \cite[Section 7.4.4]{MongT}, the lattice $H^{2}(X,\Z)^{G}$ will be different in these cases.
In the first case the lattice is $\begin{pmatrix} 6 & 2 & 2 \\ 2 & 8 & -3\\ 2 & -3 & 8\end{pmatrix}$, and in the second $\begin{pmatrix} 2 & 1 & 0 \\ 1 & 6 & 0\\ 0 & 0 & 22\end{pmatrix}$.
Moreover, it is proved in \cite[Section 7.4.4]{MongT} that these are the only possible invariant lattices for $p=11$.
So, let $X$ be a manifold of $K3^{[2]}$-type and $G$ a symplectic automorphism group of $X$ of order 11.
If $H^{2}(X,\Z)^{G}$ is isomorphic to $\begin{pmatrix} 6 & 2 & 2 \\ 2 & 8 & -3\\ 2 & -3 & 8\end{pmatrix}$, we will denote the quotient $X/G$ by $M_{11}^{1}$, 
and if $H^{2}(X,\Z)^{G}$ is isomorphic to $\begin{pmatrix} 2 & 1 & 0 \\ 1 & 6 & 0\\ 0 & 0 & 22\end{pmatrix}$, we will denote the quotient $X/G$ by $M_{11}^{2}$.
Hence, we obtain the following corollary.
\begin{cor}\label{p11}
Let $X$ be a manifold of $K3^{[2]}$-type. Let $G$ be a symplectic automorphism group of $X$ of order 11.
Then, $(X,G)$ is $H^{2}$-normal and $H^{4}$-normal.
\end{cor}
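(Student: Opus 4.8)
The plan is to compute the Boissi\`ere--Nieper-Wisskirchen--Sarti invariants $l_q^2(X)$ from the two invariant lattices recorded above, to propagate them to $H^4$ via the cup product, and then to read off both normalities. First I would note that $H^*(X,\Z)$ is torsion-free and $H^{\mathrm{odd}}(X,\Z)=0$ by Theorem \ref{cohoS2}, so all the tools of Section \ref{Boiss'} apply with $p=11$. In both Mongardi examples the invariant lattice $H^2(X,\Z)^G$ has rank $3$ and determinant $242=2\cdot 11^2$ (a direct $3\times 3$ computation in each case), so the identity $\discr H^2(X,\Z)^G = 2\cdot 11^{a_G^2(X)}$ of Lemma \ref{lelemme} forces $a_G^2(X)=2$. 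By Definition-Proposition \ref{defipro2} this gives $l_{11}^2(X)=a_G^2(X)=2$, whence $\rk H^2(X,\Z)^G = l_{11}^2(X)+l_1^2(X)=3$ yields $l_1^2(X)=1$; finally $\rk H^2(X,\Z)=23=11\,l_{11}^2(X)+10\,l_{10}^2(X)+l_1^2(X)$ gives $l_{10}^2(X)=0$ by Proposition \ref{Boiss}. Thus the triple $(l_1^2,l_{10}^2,l_{11}^2)=(1,0,2)$ is the same in both cases, so the argument will be uniform.

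The key feature of the prime $11$, which makes this case cleaner than the orders $3$ and $5$ treated above, is that $H^4(X,\Z)/\Sym^2 H^2(X,\Z)=(\Z/2\Z)^{23}\oplus\Z/5\Z$ (Theorem \ref{cohoS2}) is $11$-torsion-free. Hence Proposition \ref{Hktprop} applies with $t=2$, $k=2$, $p=11$: the reduced cup-product map $\mathscr{S}\colon \Sym^2 H^2(X,\F)\to H^4(X,\F)$ is an isomorphism. By Proposition \ref{ttk} I can therefore transport the Jordan data to degree $4$, and Lemma \ref{lemmecal} gives
\[
l_1^4(X)=\frac{l_1^2(X)\bigl(l_1^2(X)+1\bigr)}{2}+\frac{l_{10}^2(X)\bigl(l_{10}^2(X)-1\bigr)}{2}=\frac{1\cdot 2}{2}+0=1 .
\]
Since $l_1^4(X)=1$ and $\dim_{\mathbb{C}}X=4$, Proposition \ref{first2} immediately yields that $(X,G)$ is $H^4$-normal, with no need for the toric blow-up machinery of Theorem \ref{maintori} or any weight computation.

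It then remains to descend from $H^4$ to $H^2$. Because $\mathscr{S}$ is an isomorphism, $\mathscr{S}(\Sym^2 H^2(X,\F))$ trivially admits a $G$-stable complement, so Proposition \ref{Hkt} applies and the $H^4$-normality established above propagates to $H^2$-normality. The only genuinely delicate point is the one already exploited: the absence of $11$-torsion in $H^4(X,\Z)/\Sym^2 H^2(X,\Z)$, which is what simultaneously enables the computation of $l_1^4(X)$ through $\Sym^2$ and the transfer of normality between degrees $4$ and $2$. This is precisely the step that fails for the order-$5$ quotient, where $5$ divides one of the torsion orders and one must instead work directly with the Qin--Wang basis of $H^4(S^{[2]},\Z)$ and determine the weights of the five fixed points; for $p=11$ that obstacle simply disappears, and the proof reduces to the lattice arithmetic of the first paragraph.
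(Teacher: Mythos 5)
Your proposal is correct and follows essentially the same route as the paper: both arguments reduce to showing $l_{1}^{4}(X)=1$ (equivalently $a_{G}^{4}(X)=\rk H^{4}(X,\Z)^{G}-1$) so that Proposition \ref{first2} gives $H^{4}$-normality, and then descend to $H^{2}$-normality via Propositions \ref{Hkt}, \ref{Hktprop} and Theorem \ref{cohoS2}. The only difference is that you make explicit the intermediate steps the paper leaves implicit --- the computation $\discr H^{2}(X,\Z)^{G}=2\cdot 11^{2}$, the resulting $(l_{1}^{2},l_{10}^{2},l_{11}^{2})=(1,0,2)$, and the transfer to degree $4$ through $\Sym^{2}$ using the absence of $11$-torsion in $H^{4}(X,\Z)/\Sym^{2}H^{2}(X,\Z)$ --- which is a welcome clarification.
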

\begin{proof}
In the both cases, we have $\discr H^{2}(X,\Z)^{G}=2\times11^{2}$. Hence, by Lemma \ref{lemfin} (ii):
\begin{equation}
\ell_{11}^2(X)=2.
\label{ApplicationEquation}
\end{equation}
It follows from Corollary \ref{BasicLatticeQuotient} that 
$$\ell_+^2(X)=1\ \text{and}\ \ell_-^2(X)=0.$$
We recall from Section \ref{BoissiereSartiH2}, that when $p\geq3$, we have $\ell_+^k(X)=\ell_1^k(X)$ and $\ell_-^k(X)=\ell_{p-1}^k(X)$ for all $k$.
Hence, by (\ref{Sym2}) and \cite[Lemma 6.14 ]{SmithTh}:
$$\ell_+^4(X)=1.$$
So, it follows from Remark \ref{MainBasicAlphaInequalityRemark} that:
$$\alpha_4(X)\leq \frac{1}{2}.$$
Since $\alpha_4(X)$ is an integer, we have $\alpha_4(X)=0$. So, $(X,G)$ is $H^4$-normal.
Therefore, the $H^2$-normality follows from Corollary \ref{H2H4}.
\end{proof}

The end of the section is dedicated to the proof of Theorem \ref{MainOrder11}.

Assume
$$H^{2}(X,\Z)^{G}\simeq\begin{pmatrix} 6 & 2 & 2 \\ 2 & 8 & -3\\ 2 & -3 & 8\end{pmatrix}.$$
We denote by $\left\{a, b, c\right\}$ an integral basis of $H^{2}(X,\Z)^{G}$ with $B_{X}(a,a)=6$, $B_{X}(b,b)=8$, $B_{X}(c,c)=8$, $B_{X}(a,b)=B_{X}(a,c)=2$ and $B_{X}(b,c)=-3$. Let 
$$\mathcal{B}=\left\{\frac{\pi_{*}(b)-\pi_{*}(c)}{11}, \frac{4\pi_{*}(a)-\pi_{*}(b)}{11}, \frac{\pi_{*}(a)-3\pi_{*}(c)}{11}\right\}.$$
We show that $\mathcal{B}$ is a basis of $H^{2}(M_{11}^{1},\Z)$.
By Lemma \ref{lemfin} (iv),  $\mathcal{B}\subset \pi_*(H^{2}(X,\Z))$. 
Moreover, we have $$\frac{\left\langle \mathcal{B}\right\rangle}{\pi_{*}(H^{2}(X,\Z)^{G})}=(\Z/11\Z)^2.$$
Hence, by Lemma \ref{lemfin} (v) and (\ref{ApplicationEquation}):
$$\left\langle \mathcal{B}\right\rangle=\pi_{*}(H^{2}(X,\Z)).$$
It follows from Corollary \ref{p11} that $\mathcal{B}$ is a basis of $H^{2}(M_{11}^{1},\Z)$.

The matrix of the sublattice of $H^{2}(X,\Z)^{G}$ generated by $b-c$, $4a-b$ and $a-3c$ is
$$\begin{pmatrix} 2\times11 & -11 & 3\times 11\\ -11& 8\times 11& -11\\ 3\times11 & -11 & 6\times11\end{pmatrix}.$$
Then, by (\ref{beau2}), the Beauville--Bogomolov form on $H^{2}(M_{11}^{1},\Z)$ gives the lattice
$$\frac{1}{11}\sqrt{\frac{3\cdot11^{3}}{c_{M_{11}^{1}}}}\begin{pmatrix} 2 & -1 & 3 \\ -1 & 8 & -1\\ 3 & -1 & 6\end{pmatrix}.$$
Since the Beauville--Bogomolov form is integral and indivisible, it follows that $c_{M_{11}^{1}}=33$, and we get the lattice
$$\begin{pmatrix} 2 & -1 & 3 \\ -1 & 8 & -1\\ 3 & -1 & 6\end{pmatrix}.$$

Now assume that $$H^{2}(X,\Z)^{G}\simeq\begin{pmatrix} 2 & 1 & 0 \\ 1 & 6 & 0\\ 0 & 0 & 22\end{pmatrix}.$$
The proof is identical taking the basis $$\mathcal{B}=\left\{\frac{\pi_{*}(a)-2\pi_{*}(b)}{11},\frac{6\pi_{*}(a)-\pi_{*}(b)}{11}, \frac{\pi_{*}(c)}{11}\right\},$$
where $\left\{a, b, c \right\}$ is an integral basis of $H^{2}(X,\Z)^{G}$ with $B_{X}(a,a)=2$, $B_{X}(b,b)=6$, $B_{X}(c,c)=22$, $B_{X}(b,c)=B_{X}(a,c)=0$ and $B_{X}(a,b)=1$. 
\subsection{Quotient of a $K3^{[2]}$-type manifold by a symplectic automorphisms of order 3}\label{SymplecticOrder3}
As mentioned in the introduction, in \cite[Theorem 7.2.7]{MongT}, Mongardi distinguishes two kinds of symplectic automorphism of order 3 on a $K3^{[2]}$-type manifold one with 27 isolated fixed points and another with a fixed abelian surface. We recall from \cite[Example 4.2.1]{MongT}:
\begin{rmk}
Let $S$ be a K3 surface.
A natural symplectic automorphism of order 3 on $S^{[2]}$ has 27 isolated fixed points.
\end{rmk}
Moreover, Mongardi in \cite[Theorem 2.5]{Mong} shows the following result:
\begin{prop}\label{MongEtReMong}
Let $X$ be a $K3^{[2]}$-type manifold and $\varphi$ be a symplectic automorphism of order 3 with 27 isolated fixed points. Then,  
 $(X,\varphi)$ can be deformed to a couple $(S^{[2]},\phi^{[2]})$, where $S$ is a K3 surface and $\phi^{[2]}$ is the automorphism induced on $S^{[2]}$ by a symplectic automorphism $\phi$ of order 3 on $S$.
 \end{prop}
In this section, we are considering these symplectic automorphisms $\varphi$ of order 3, with 27 isolated points, on a $K3^{[2]}$-type manifold $X$; and we denote the quotient $X/\varphi$ by $M_3$. 
\begin{cor}\label{p3}
Let $X$ be a manifold of $K3^{[2]}$-type. Let $G$ be a symplectic automorphisms group of order 3 of $X$ with 27 isolated fixed points.
Then, $(X,G)$ is $H^{2}$-normal and $H^{4}$-normal.
\end{cor}
\begin{proof}
By Corollary \ref{H2H4},
if $(X,G)$ is $H^{4}$-normal then $(X,G)$ is $H^{2}$-normal.
Hence, we have only to show the $H^{4}$-normality. 

The action of $G$ is symplectic on $X$, in this case, the only possible local action around the fixed points is given by: $$\diag(\xi_3,\xi_3,\xi_3^2,\xi_3^2),$$ where  $\xi_3$ is a 3-root of the unity. So, unfortunately, the fixed points of $G$ are not simple. However, since $G$ is of order 3, we can get around this difficulty using Proposition \ref{blowup2}. 

Let $X_1$ be the blow-up of $X$ in $\Fix G$ and $G_1$ the automorphisms group induced from $G$. From Lemma \ref{ResolutionLemma} the fixed points of $G_1$ are simple. Hence, it will be possible to apply Theorem \ref{main} to $(X_1,G_1)$. 

First, using the same method as in the proof of Lemma \ref{ResolutionLemma}, we remark that for each isolated fixed point of $G$, we get 2 rational fixed lines of $G_1$. So, $\Fix G_1$ consists in 54 rational lines. 
Moreover, from \cite{Markman}, $H^{*}(X,\Z)$ is torsion-free, then from Theorem \ref{Vois}, $H^{*}(X_1,\Z)$ is also torsion-free.
Hence, it only remains to verify the condition (ii) of Theorem \ref{main}.

From Theorem \ref{Vois}:
$$H^{i}(X_1,\Z)\simeq H^{i}(X,\Z)\oplus H^0(pt,\Z)^{27\oplus}\ \text{and}\ H^{j}(X_1,\Z)\simeq H^{j}(X,\Z),$$
for $i\in\left\{2,4,6\right\}$ and $j\in\left\{0,1,3,5,7,8\right\}$.
Since the exceptional divisors are fixed by the action of $G_1$, it follows that:
\begin{equation} 
\ell_+^i(X_1)=\ell_+^i(X)+27,
\label{Proofp=3}
\end{equation}
for all $i\in\left\{2,4,6\right\}$. The others $\ell_*^j$ are the same for $X$ and $X_1$. 
Moreover, from \cite[Theorem 1.1]{SmithTh} and Proposition \ref{BNSFormula}:
$$h^*(\Fix G,\Q)=\sum_{1\leq q < p} \ell_{q}^*(X).$$
Since $\Fix G$ consists in 27 isolated points, we get:
\begin{equation}
27=\sum_{1\leq q < p} \ell_{q}^*(X).
\label{Proofp=3a}
\end{equation}
By (\ref{Proofp=3}):
\begin{equation}
\sum_{1\leq q < p} \ell_{q}^*(X_1)=\sum_{1\leq q < p} \ell_{q}^*(X)+3\times27.
\label{Proofp=3b}
\end{equation}
Since $\Fix G_1$ consists in 54 rational curves, we have:
\begin{equation}
h^*(\Fix G_1,\Q)=2\times54=4\times 27.
\label{Proofp=3c}
\end{equation}
Finally, from (\ref{Proofp=3a}), (\ref{Proofp=3b}) and (\ref{Proofp=3c}), we get:
$$h^*(\Fix G_1,\Q)=\sum_{1\leq q < p} \ell_{q}^*(X_1).$$
It follows from Theorem \ref{main} that $(X_1,G_1)$ is $H^4$-normal.
We conclude with Proposition \ref{parrynormal}.
\end{proof}
The end of the section is dedicated to the proof of Theorem \ref{MainOrder3}.

From \cite[Theorem 4.1]{Sati} and Proposition \ref{MongEtReMong}, there is an isometry of lattices $H^{2}(X,\Z)^{G}\simeq U\oplus U(3)\oplus U(3)\oplus A_{2}\oplus A_{2}\oplus (-2)$.
In the rest of the proof, we identify $H^{2}(X,\Z)^{G}$ with the lattice $U\oplus U(3)\oplus U(3)\oplus A_{2}\oplus A_{2}\oplus (-2)$

By Lemma \ref{lemfin} (iv), we have:
\begin{equation}
\frac{1}{3}\pi_{*}(U(3))\subset H^{2}(M_{3},\Z).
\label{LastEquations}
\end{equation}
Let $(a, b)$ an integral basis of $A_{2}$, with $B_{X}(a,a)= B_{X}(b,b)=-2$ and $B_{X}(a,b)=1$.
Idem, by Lemma \ref{lemfin} (iv), we have: 
$$\frac{\pi_*(a)-\pi_*(b)}{3}\in H^{2}(M_{3},\Z).$$
The lattice generated by $a-b$ and $a+2b$ is isomorphic to $A_2(3)=\begin{pmatrix} -6 & 3 \\ 3 & -6\end{pmatrix}$. So, we denote $A_2(3):=\left\langle a-b,a+2b\right\rangle$.
We have: 
\begin{equation}
\frac{1}{3}\pi_*(A_2(3))\in H^{2}(M_{3},\Z).
\label{LastEquations2}
\end{equation}
Then, by (\ref{LastEquations}) and (\ref{LastEquations2}):
$$\pi_*(H^{2}(X,\Z))\supset\pi_{*}\left(U\oplus\frac{1}{3}U(3)^{2}\oplus\frac{1}{3}A_{2}(3)^{2}\oplus(-2)\right).$$
Therefore, by (ii) and (v) of Lemma \ref{lemfin}, we obtain:
$$\pi_*(H^{2}(X,\Z))=\pi_{*}\left(U\oplus\frac{1}{3}U(3)^{2}\oplus\frac{1}{3}A_{2}(3)^{2}\oplus(-2)\right).$$
So by Corollary \ref{p3}, 
$$H^2(M_{3},\Z)=\pi_{*}\left(U\oplus\frac{1}{3}U(3)^{2}\oplus\frac{1}{3}A_{2}(3)^{2}\oplus(-2)\right).$$
Then, by (\ref{beau2}), the Beauville--Bogomolov form of $H^{2}(M_{3},\Z)$ gives the lattice
\begin{align*}
&U\left(\sqrt{\frac{81}{c_{M_{3}}}}\right)\oplus\frac{1}{3}U^{2}\left(3\sqrt{\frac{81}{c_{M_{3}}}}\right)\oplus\frac{1}{3}A_{2}^{2}\left(3\sqrt{\frac{81}{c_{M_{3}}}}\right)\oplus\left(-2\sqrt{\frac{81}{c_{M_{3}}}}\right)\\
&=U\left(3\sqrt{\frac{9}{c_{M_{3}}}}\right)\oplus U^{2}\left(\sqrt{\frac{9}{c_{M_{3}}}}\right)\oplus A_{2}^{2}\left(\sqrt{\frac{9}{c_{M_{3}}}}\right)\oplus\left(-6\sqrt{\frac{9}{c_{M_{3}}}}\right).
\end{align*}
Then knowing that the Beauville--Bogomolov form is integral and indivisible, we have $c_{M_{3}}=9$ and we get the lattice
$$U(3)\oplus U^{2}\oplus A_{2}^{2}\oplus (-6).$$
\bibliographystyle{amssort}

\noindent
Gr\'egoire \textsc{Menet}

\noindent
IMB, Universit\'e de Dijon

\noindent 
9 avenue Alain Savary, 21000 DIJON (France)

\noindent
{\tt gregoire.menet@u-bourgogne.fr}

\end{document}